\documentclass[12pt]{article}

\usepackage{latexsym,amsmath,amscd,amssymb,graphics,float}
\usepackage{enumerate}

\usepackage{graphicx,lscape}

\usepackage[colorlinks]{hyperref}
\usepackage{url}

\textwidth15.8 cm
\topmargin -1.4 cm \textheight22.5 cm \oddsidemargin.4cm
\evensidemargin.4cm


\begin{document}

\newenvironment{proof}[1][Proof]{\textbf{#1.} }{\ \rule{0.5em}{0.5em}}

\newtheorem{theorem}{Theorem}[section]
\newtheorem{definition}[theorem]{Definition}
\newtheorem{lemma}[theorem]{Lemma}
\newtheorem{remark}[theorem]{Remark}
\newtheorem{proposition}[theorem]{Proposition}
\newtheorem{corollary}[theorem]{Corollary}
\newtheorem{example}[theorem]{Example}

\numberwithin{equation}{section}
\newcommand{\ep}{\varepsilon}
\newcommand{\R}{{\mathbb  R}}
\newcommand\C{{\mathbb  C}}
\newcommand\Q{{\mathbb Q}}
\newcommand\Z{{\mathbb Z}}
\newcommand{\N}{{\mathbb N}}

\newcommand{\bfi}{\bfseries\itshape}

\newsavebox{\savepar}
\newenvironment{boxit}{\begin{lrbox}{\savepar}
\begin{minipage}[b]{15.5cm}}{\end{minipage}\end{lrbox}
\fbox{\usebox{\savepar}}}

\title{{\bf Bifurcations of limit cycles of perturbed completely integrable systems}}
\author{R\u{a}zvan M. Tudoran and Anania G\^irban}

\date{}
\maketitle \makeatother

\begin{abstract}
The main purpose of this article is to study from the geometric point of view the problem of limit cycles bifurcation of perturbed completely integrable systems. 
\end{abstract}

\medskip

\textbf{AMS 2010}: 34C07; 37G15; 37J20; 37J40.

\textbf{Keywords}: Limit cycles; bifurcations; perturbations; completely integrable systems; Hamiltonian dynamics.

\section{Introduction}
\label{section:one}
The main purpose of this article is to study from the geometric point of view, the problem of limit cycles bifurcation of perturbed real analytic completely integrable systems. By geometric approach of the problem we mean the use of those perturbations which preserves some of the geometry of the completely integrable system. This problem was also analyzed in the geometric context for a particular three dimensional completely integrable system, namely Euler's equations of the free rigid body, \cite{bermejo1}. Also a non-geometric version of the problem, using averaging methods, was recently analyzed in \cite{bermejo2}.

Our approach to the bifurcation problem is a local one, and makes use of the Darboux normal form of completely integrable systems. In order to do that, in the second section of the article we give a geometric local picture of a general real analytic completely integrable system. More precisely, we prove that the Hamilton-Poisson realizations (on Poisson manifolds with bracket coming from a Nambu bracket) of general real analytic completely integrable systems, are rigid under arbitrary analytic change of coordinates. As a consequence, we obtain a unitary approach of two local normal forms of a general completely integrable system, i.e., the Darboux normal form, and a new local normal form, which provides a Flow Box Theorem in the category of completely integrable systems. Returning to the limit cycles bifurcation problem, our approach proves to be tight related with the problem of limit cycles bifurcations of planar analytic Hamiltonian dynamical systems, since by the Darboux normal form, the (local) nontrivial dynamics of an $n-$dimensional analytic completely integrable system, is actually described by that of a planar analytic Hamiltonian dynamical system. Hence, we shall use the results in the planar theory, in order to analyze (locally) similar bifurcation phenomena for $n-$dimensional analytic completely integrable systems. The purpose of the third section of this article is to recall some of the main approaches to the problem of limit cycles bifurcations, for planar analytic Hamiltonian dynamical systems. The fourth section is the main part of the article. Here we analyze the problem of limit cycles bifurcation of geometrically perturbed real analytic completely integrable systems, from two points of view. More precisely, in the first part of this section one considers the problem of limit cycles bifurcations, in the context of a general perturbation which preserves an a-priori fixed symplectic leaf of the Poisson configuration manifold of the unperturbed completely integrable system. On the other hand, in the second part of this section we deal with the same problem, this time analyzing the class of perturbations that preserves all of the regular symplectic leaves of the Poisson configuration manifold of the unperturbed completely integrable system. The fifth section is concerned with the illustration of the main results of this article, to a concrete dynamical system. The example we shall consider for this purpose is a real analytic completely integrable system which generates the so called \textit{Jacobi hyperelliptic functions} \cite{bates}.

\section{A geometric picture of completely integrable systems, and two local normal forms}

The aim of this section is to present a geometric local picture of a general real analytic completely integrable system. More precisely, we prove that the Hamilton-Poisson realizations (on Poisson manifolds with bracket coming from a Nambu bracket) of general real analytic completely integrable systems, are rigid under arbitrary analytic change of coordinates. As a consequence, we obtain a unitary approach of two local normal forms of a general completely integrable system, i.e., the Darboux normal form, and a new local normal form, which provides a Flow Box Theorem in the category of completely integrable systems. Note that all the results in this section will also hold in the smooth category; the only reason we work in the analytic category is to give a unitary presentation of the paper.

Let us start by recalling from \cite{tudoran} the Hamiltonian realization procedure of a completely integrable system. For similar Hamilton-Poisson and respectively Nambu-Poisson formulations of completely integrable systems, see e.g., \cite{g1}, \cite{g2}, \cite{nambu1}, \cite{nambu2}, \cite{ratiurazvan}.

Recall that a real analytic \textit{completely integrable system} is a $\mathcal{C}^{\omega}$ (i.e., real analytic) differential system defined on a domain (i.e., connected open subset) $\Omega\subseteq\R^n$,
\begin{equation}\label{sys}
\left\{\begin{array}{l}
\dot x_{1}=X_1(x_1,\dots,x_n)\\
\dot x_{2}=X_2(x_1,\dots,x_n)\\
\cdots\\
\dot x_{n}=X_n(x_1,\dots,x_n),\\
\end{array}\right.
\end{equation}
(where $X_1,X_2,\dots,X_n\in\mathcal{C}^{\omega}(\Omega,\R)$ are real analytic functions), which admits a set of real analytic first integrals, $C_1,\dots,C_{n-2},C_{n-1}:\Omega\rightarrow \R$, functionally independent almost everywhere with respect to the $n-$dimensional Lebesgue measure.

Since the real analytic functions $C_1,\dots,C_{n-2},C_{n-1}:\Omega\rightarrow \R$ are constants of motion of the analytic vector field $X=X_1\dfrac{\partial}{\partial{x_1}}+\dots+X_n \dfrac{\partial}{\partial{x_n}}\in\mathfrak{X}(\Omega)$, we get that for each $i\in\{1,\dots,n-1\}$,
$$
0=\left(\mathcal{L}_{X}C_i\right)(\overline{x})=\langle\nabla C_i(\overline{x}),X(\overline{x})\rangle,
$$
for every $\overline{x}=(x_1,\dots,x_n)\in\Omega$ (where $\mathcal{L}_{X}$ stands for the \textit{Lie derivative along the vector field} $X$, $\langle \cdot,\cdot\rangle$ is the canonical inner product on $\R^n$, and $\nabla$ stands for the gradient with respect to $\langle \cdot,\cdot\rangle$).

Hence, as shown in \cite{tudoran}, the $\mathcal{C}^{\omega}$ vector field $X$ is proportional with the vector field $\star (\nabla C_1\wedge\dots\wedge \nabla C_{n-1})$, where $\star$ stands for the Hodge star operator for multi-vector fields. It may happen that the domain of analyticity of the proportionality rescaling function, is a proper subset of $\Omega$. In order to simplify the notations, we shall work in the sequel on this set (which is supposed to be a domain), denoted also by $\Omega$.  

Consequently, the analytic vector field $X$ can be realized on the domain $\Omega\subseteq \R^n$ as a Hamilton-Poisson vector field $X_H \in\mathfrak{X}(\Omega)$, with respect to the Hamiltonian function $H:=C_{n-1}$, and the Poisson bracket of class $\mathcal{C}^{\omega}$ given by
$$
\{f,g\}_{\nu;C_1,\dots,C_{n-2}}\cdot \mathrm{d}x_1\wedge\dots\wedge \mathrm{d}x_n:=\nu \cdot \mathrm{d}C_1\wedge\dots \mathrm{d}C_{n-2}\wedge \mathrm{d}f\wedge \mathrm{d}g,
$$
for every $f,g\in\mathcal{C}^{\omega}(\Omega,\mathbb{R})$, where $\nu\in\mathcal{C}^{\omega}(\Omega,\R)$ is the proportionality rescaling function. 

Note that the real analytic functions $C_1,\dots,C_{n-2}$, form a complete set of Casimir functions of the Poisson bracket $\{\cdot,\cdot\}_{\nu;C_1,\dots,C_{n-2}}$.

The above defined Poisson bracket can also be obtained from the rescaled canonical Nambu bracket on $\Omega\subseteq \R^n$ as follows:
$$
\{f,g\}_{\nu;C_1,\dots,C_{n-2}}=\nu \cdot \{C_1,\dots,C_{n-2},f,g\}:=\nu \cdot \dfrac{\partial(C_1,\dots,C_{n-2},f,g)}{\partial(x_1,\dots,x_n)}.
$$

Consequently, the Hamiltonian vector field $X=X_H\in\mathfrak{X}(\Omega)$, is acting on an arbitrary analytic real function $f\in \mathcal{C}^{\omega}(\Omega,\R)$, as: 
$$X_{H}(f)=\{f,H\}_{\nu;C_1,\dots,C_{n-2}}\in \mathcal{C}^{\omega}(\Omega,\R).$$

Hence, the differential system \eqref{sys}, can be locally realized on the Poisson manifold $\left(\Omega,\{\cdot,\cdot\}_{\nu;C_1,\dots,C_{n-2}}\right)$ as a real analytic \textit{Hamiltonian dynamical system} of the type
\begin{equation}\label{systy}
\left\{\begin{array}{l}
\dot x_{1}=\{x_1,H\}_{\nu;C_1,\dots,C_{n-2}}\\
\dot x_{2}=\{x_2,H\}_{\nu;C_1,\dots,C_{n-2}}\\
\cdots \\
\dot x_{n}=\{x_n,H\}_{\nu;C_1,\dots,C_{n-2}}.\\  
\end{array}\right.
\end{equation}

Consequently, the restriction to $\Omega$ of the components $X_i$, of the vector field $X$ which generates the system \eqref{sys}, admits the following formulation:
$$X_i=\nu \cdot \dfrac{\partial(C_1,\dots,C_{n-2},x_i,H)}{\partial(x_1,\dots,x_n)},$$
for every $i\in\{1,\dots,n\}$.

Since the main purpose of this section is a local picture of general completely integrable systems, we shall analyze in the sequel the behavior of real analytic completely integrable systems, relative to a local change of coordinates. More precisely, we will prove that Hamilton-Poisson realizations of completely integrable systems, modeled on Poisson manifolds with bracket coming from a Nambu bracket, \textit{are rigid under arbitrary real analytic change of coordinates}. 

In order to do that, let us fix an analytic change of coordinates between two domains, $\Omega, W \subseteq\mathbb{R}^{n}$:
$$\overline{x}=(x_1,\dots,x_n)\in\Omega\subseteq\mathbb{R}^{n} \mapsto \overline{y}=(y_1,\dots,y_n)=\Phi(\overline{x})\in W=\Phi(\Omega)\subseteq\mathbb{R}^{n},$$ 
i.e., $\Phi=(\Phi_1 ,\dots, \Phi_n):\Omega\rightarrow W$ is real analytic, invertible, and the inverse is also real analytic (equivalently, $\Phi$ is said to be an \textit{analytic diffeomorphism}).

\begin{proposition}\label{lacc}
Let $\Phi:\Omega\rightarrow W$ be an analytic diffeomorphism between two domains $\Omega, W \subseteq\mathbb{R}^{n}$, and let $\nu,C_1,\dots,C_{n-2}\in\mathcal{C}^{\omega}(\Omega,\mathbb{R})$ be some fixed real analytic functions. Then $\Phi$ is a Poisson isomorphism between the Poisson manifold $\left(\Omega,\{\cdot,\cdot\}_{\nu;C_1,\dots,C_{n-2}}\right)$ and $\left(W,\{\cdot,\cdot\}_{\nu_{\Phi};\Phi_{\star}C_1,\dots,\Phi_{\star}C_{n-2}}\right)$, where $\nu_{\Phi}=:\Phi_{\star}\nu \cdot \Phi_{\star}\operatorname{Jac}(\Phi)$, and $\Phi_{\star}F:=F\circ\Phi^{-1}$, for every $F\in\mathcal{C}^{\omega}(\Omega,\mathbb{R})$.
\end{proposition}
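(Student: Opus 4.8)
The plan is to verify directly that $\Phi$ intertwines the two Poisson brackets, using the Nambu (Jacobian) representation recalled above. By definition, a diffeomorphism $\Phi:\Omega\to W$ is a Poisson isomorphism between $\left(\Omega,\{\cdot,\cdot\}_{\Omega}\right)$ and $\left(W,\{\cdot,\cdot\}_{W}\right)$ precisely when $\{F\circ\Phi,\,G\circ\Phi\}_{\Omega}=\{F,G\}_{W}\circ\Phi$ for all $F,G\in\mathcal{C}^{\omega}(W,\R)$, so the whole statement reduces to one identity, whose engine is the multiplicative behavior of Jacobian determinants under composition.

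First I would check that the target bracket is well defined, i.e., that $\Phi_{\star}C_{1},\dots,\Phi_{\star}C_{n-2}$ and $\nu_{\Phi}=\Phi_{\star}\nu\cdot\Phi_{\star}\operatorname{Jac}(\Phi)$ are genuinely real analytic on $W$. This is immediate: $\Phi^{-1}$ is analytic, $\operatorname{Jac}(\Phi)$ is analytic (a polynomial in the analytic entries of $D\Phi$) and nowhere vanishing since $\Phi$ is a diffeomorphism, and compositions and products of analytic functions are analytic; hence $\{\cdot,\cdot\}_{\nu_{\Phi};\Phi_{\star}C_{1},\dots,\Phi_{\star}C_{n-2}}$ is an admissible bracket of the type introduced above.

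For the core step, fix $F,G\in\mathcal{C}^{\omega}(W,\R)$ and compute $\{F\circ\Phi,G\circ\Phi\}_{\nu;C_{1},\dots,C_{n-2}}$ on $\Omega$. Writing $\overline y=\Phi(\overline x)$ and applying the chain rule for Jacobians to the $n$ functions $\Phi_{\star}C_{1},\dots,\Phi_{\star}C_{n-2},F,G$ on $W$, whose pullbacks under $\Phi$ are respectively $C_{1},\dots,C_{n-2},F\circ\Phi,G\circ\Phi$ (using $\Phi_{\star}C_{i}\circ\Phi=C_{i}$), one obtains
$$
\frac{\partial(C_{1},\dots,C_{n-2},F\circ\Phi,G\circ\Phi)}{\partial(x_{1},\dots,x_{n})}=\left(\frac{\partial(\Phi_{\star}C_{1},\dots,\Phi_{\star}C_{n-2},F,G)}{\partial(y_{1},\dots,y_{n})}\circ\Phi\right)\cdot\operatorname{Jac}(\Phi).
$$
Multiplying by $\nu$ and regrouping, and noting that $\nu_{\Phi}\circ\Phi=(\Phi_{\star}\nu\circ\Phi)\cdot(\Phi_{\star}\operatorname{Jac}(\Phi)\circ\Phi)=\nu\cdot\operatorname{Jac}(\Phi)$, the right-hand side becomes $(\nu_{\Phi}\circ\Phi)\cdot\left(\frac{\partial(\Phi_{\star}C_{1},\dots,\Phi_{\star}C_{n-2},F,G)}{\partial(y_{1},\dots,y_{n})}\circ\Phi\right)$, which is exactly $\{F,G\}_{\nu_{\Phi};\Phi_{\star}C_{1},\dots,\Phi_{\star}C_{n-2}}\circ\Phi$. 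This establishes the intertwining identity; since $\Phi$ is an analytic diffeomorphism, it is therefore a Poisson isomorphism, and one observes in passing that the Casimirs $C_{1},\dots,C_{n-2}$ are carried to the Casimirs $\Phi_{\star}C_{1},\dots,\Phi_{\star}C_{n-2}$, as they must be.

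There is no conceptual difficulty here; the only thing demanding care is the bookkeeping, namely keeping straight the distinction between precomposition with $\Phi$, precomposition with $\Phi^{-1}$, and the operator $\Phi_{\star}$, and ensuring that the factor $\operatorname{Jac}(\Phi)$ (rather than $\operatorname{Jac}(\Phi^{-1})$) lands so that $\nu_{\Phi}$ comes out exactly as stated. A convenient way to avoid any misplacement is to phrase the chain-rule step intrinsically via top-degree forms: $\Phi^{*}\big(\mathrm{d}u_{1}\wedge\cdots\wedge\mathrm{d}u_{n}\big)=\mathrm{d}(u_{1}\circ\Phi)\wedge\cdots\wedge\mathrm{d}(u_{n}\circ\Phi)$ together with $\Phi^{*}\big(\mathrm{d}y_{1}\wedge\cdots\wedge\mathrm{d}y_{n}\big)=\operatorname{Jac}(\Phi)\,\mathrm{d}x_{1}\wedge\cdots\wedge\mathrm{d}x_{n}$, which is just the change-of-variables formula and yields the displayed Jacobian identity at once.
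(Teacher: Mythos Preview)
Your proof is correct and follows essentially the same approach as the paper: both reduce the claim to the multiplicativity of Jacobian determinants under composition (the chain rule), with the only cosmetic difference being that the paper phrases the intertwining identity via $\Phi^{-1}$ acting on functions $f,g\in\mathcal{C}^{\omega}(\Omega,\mathbb{R})$, whereas you phrase it via $\Phi$ acting on $F,G\in\mathcal{C}^{\omega}(W,\mathbb{R})$. Your additional remarks on the analyticity of the target bracket and the top-form reformulation are helpful but not present in the paper's version.
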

\begin{proof}
The conclusion is equivalent to $\Phi^{-1}:W\rightarrow \Omega$ is a Poisson isomorphism, i.e.,
\begin{equation}\label{psdif}
\{f,g\}_{\nu;C_1,\dots,C_{n-2}}\circ\Phi^{-1}=\{f\circ\Phi^{-1},g\circ\Phi^{-1}\}_{\nu_{\Phi};\Phi_{\star}C_1,\dots,\Phi_{\star}C_{n-2}},
\end{equation}
for every $f,g\in\mathcal{C}^{\omega}(\Omega,\mathbb{R})$, where 
$$
\nu_{\Phi}=(\nu\circ\Phi^{-1})\cdot\left(\dfrac{\partial(\Phi_1,\dots,\Phi_n)}{\partial(x_1,\dots,x_n)}\circ\Phi^{-1}\right),
$$
and $\Phi=(\Phi_1 ,\dots, \Phi_n):\Omega\rightarrow W$.

In order to prove the relation \eqref{psdif}, let $f,g\in\mathcal{C}^{\omega}(\Omega,\mathbb{R})$ and $\overline{y}\in W$, be given. Applying the chain rule, we get the following equalities:
\begin{align*}
&(\{f,g\}_{\nu;C_1,\dots,C_{n-2}}\circ\Phi^{-1})(\overline{y})=\{f,g\}_{\nu;C_1,\dots,C_{n-2}}(\Phi^{-1}(\overline{y}))=\nu(\Phi^{-1}(\overline{y}))\cdot\dfrac{\partial(C_1,\dots,C_{n-2},f,g)}{\partial(x_1,\dots,x_n)}(\Phi^{-1}(\overline{y}))\\
&=\nu(\Phi^{-1}(\overline{y}))\cdot\dfrac{\partial((C_1\circ\Phi^{-1})\circ\Phi,\dots,(C_{n-2} \circ\Phi^{-1})\circ\Phi,(f\circ\Phi^{-1})\circ\Phi,(g\circ\Phi^{-1})\circ\Phi)}{\partial(x_1,\dots,x_n)}(\Phi^{-1}(\overline{y}))\\
&=\nu(\Phi^{-1}(\overline{y}))\cdot\dfrac{\partial(C_1 \circ\Phi^{-1},\dots,C_{n-2}\circ\Phi^{-1},f\circ\Phi^{-1},g\circ\Phi^{-1})}{\partial(y_1,\dots,y_n)}(\overline{y})\cdot\dfrac{\partial(\Phi_1,\dots,\Phi_n)}{\partial(x_1,\dots,x_n)}(\Phi^{-1}(\overline{y}))\\
&=\left[(\nu\circ\Phi^{-1})(\overline{y})\cdot\left(\dfrac{\partial(\Phi_1,\dots,\Phi_n)}{\partial(x_1,\dots,x_n)}\circ\Phi^{-1}\right)(\overline{y})\right]\cdot\dfrac{\partial(C_1 \circ\Phi^{-1},\dots,C_{n-2}\circ\Phi^{-1},f\circ\Phi^{-1},g\circ\Phi^{-1})}{\partial(y_1,\dots,y_n)}(\overline{y})\\
&=\nu_{\Phi}(\overline{y})\cdot \dfrac{\partial(\Phi_{\star}C_1,\dots,\Phi_{\star}C_{n-2},\Phi_{\star}f,\Phi_{\star}g)}{\partial(y_1,\dots,y_n)}(\overline{y})=\{\Phi_{\star}f,\Phi_{\star}g\}_{\nu_{\Phi};\Phi_{\star}C_1,\dots,\Phi_{\star}C_{n-2}}(\overline{y})\\
&=\{f\circ\Phi^{-1},g\circ\Phi^{-1}\}_{\nu_{\Phi};\Phi_{\star}C_1,\dots,\Phi_{\star}C_{n-2}}(\overline{y}),
\end{align*}
which imply the conclusion.
\end{proof}

From Proposition \eqref{lacc} we obtain that for each real analytic function $H\in\mathcal{C}^{\omega}(\Omega,\mathbb{R})$, the push forward by $\Phi$ of the Hamiltonian vector field $X_H \in\mathfrak{X}(\Omega)$, defined on the Poisson manifold $\left(\Omega,\{\cdot,\cdot\}_{\nu;C_1,\dots,C_{n-2}}\right)$, is a Hamiltonian vector filed modeled on the Poisson manifold $\left(W,\{\cdot,\cdot\}_{\nu_{\Phi};\Phi_{\star}C_1,\dots,\Phi_{\star}C_{n-2}}\right)$, given by $\Phi_{\star}X_{H}=X_{\Phi_{\star}H}$.

Consequently, from the Hamiltonian realization \eqref{systy} of the completely integrable system \eqref{sys}, we obtain the expression of the push forward through an arbitrary analytic change of coordinates, $\Phi:\Omega\rightarrow W$, of the vector field $X=X_1 \dfrac{\partial}{\partial{x_1}}+\dots+X_n \dfrac{\partial}{\partial{x_n}}$, associated to the completely integrable system \eqref{sys}. More precisely, we have shown the following result.

\begin{theorem}\label{tsv}
Let $$X = \sum_{i=1}^{n}\nu \cdot \dfrac{\partial(C_1,\dots,C_{n-2},x_i,H)}{\partial(x_1,\dots,x_n)}\cdot\dfrac{\partial}{\partial{x_i}},$$ be the vector field associated to the real analytic completely integrable system \eqref{sys}, written as a Hamiltonian dynamical system of the form \eqref{systy}, with Hamiltonian $H:=C_{n-1}$, defined on the Poisson manifold $\left(\Omega,\{\cdot,\cdot\}_{\nu;C_1,\dots,C_{n-2}}\right)$. Let  $\Phi:\Omega\rightarrow W=\Phi(\Omega)$ be an analytic diffeomorphism. 

Then $\Phi_{\star}X$, the push forward by $\Phi$ of the vector field $X$, is a Hamiltonian vector field, with Hamiltonian $\Phi_{\star}H=\Phi_{\star}C_{n-1}$, defined on the Poisson manifold $\left(W,\{\cdot,\cdot\}_{\nu_{\Phi};\Phi_{\star}C_1,\dots,\Phi_{\star}C_{n-2}}\right)$, and has the expression
\begin{equation}\label{vpush}
\Phi_{\star}X = \sum_{i=1}^{n}\nu_{\Phi} \cdot \dfrac{\partial(\Phi_{\star}C_1,\dots,\Phi_{\star}C_{n-2},y_i,\Phi_{\star}H)}{\partial(y_1,\dots,y_n)}\cdot\dfrac{\partial}{\partial{y_i}},
\end{equation}
where $\nu_{\Phi}=\Phi_{\star}\nu\cdot \Phi_{\star}\operatorname{Jac}(\Phi)$, and $(y_1,\dots,y_n)=\Phi(x_1,\dots,x_n)$, denote the local coordinates on the domain $W$.
\end{theorem}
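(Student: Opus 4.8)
The plan is to obtain Theorem~\ref{tsv} as an immediate consequence of Proposition~\ref{lacc} together with the standard fact that a Poisson isomorphism carries Hamiltonian vector fields to Hamiltonian vector fields. The starting point is the observation, already recorded above, that the vector field $X$ of the completely integrable system~\eqref{sys} \emph{is} the Hamiltonian vector field $X_H$ on the Poisson manifold $\left(\Omega,\{\cdot,\cdot\}_{\nu;C_1,\dots,C_{n-2}}\right)$ with $H:=C_{n-1}$, acting on $f\in\mathcal{C}^{\omega}(\Omega,\R)$ by $X_H(f)=\{f,H\}_{\nu;C_1,\dots,C_{n-2}}$, which is exactly the content of the realization~\eqref{systy}.

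The key step is to prove the intertwining relation $\Phi_{\star}X_H=X_{\Phi_{\star}H}$ on $\left(W,\{\cdot,\cdot\}_{\nu_{\Phi};\Phi_{\star}C_1,\dots,\Phi_{\star}C_{n-2}}\right)$. For an arbitrary $g\in\mathcal{C}^{\omega}(W,\R)$, the definition of the push-forward of a vector field gives $(\Phi_{\star}X_H)(g)=\bigl(X_H(g\circ\Phi)\bigr)\circ\Phi^{-1}=\{g\circ\Phi,\,H\}_{\nu;C_1,\dots,C_{n-2}}\circ\Phi^{-1}$. Writing $H=(\Phi_{\star}H)\circ\Phi$ and invoking relation~\eqref{psdif} from the proof of Proposition~\ref{lacc}, i.e. $\{\varphi,\psi\}_{\nu;C_1,\dots,C_{n-2}}\circ\Phi^{-1}=\{\Phi_{\star}\varphi,\Phi_{\star}\psi\}_{\nu_{\Phi};\Phi_{\star}C_1,\dots,\Phi_{\star}C_{n-2}}$ with $\varphi=g\circ\Phi$ and $\psi=H$, the previous expression equals $\{g,\Phi_{\star}H\}_{\nu_{\Phi};\Phi_{\star}C_1,\dots,\Phi_{\star}C_{n-2}}=X_{\Phi_{\star}H}(g)$, as desired. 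Specializing to $H=C_{n-1}$ then yields that $\Phi_{\star}X=X_{\Phi_{\star}C_{n-1}}$ is Hamiltonian on $\left(W,\{\cdot,\cdot\}_{\nu_{\Phi};\Phi_{\star}C_1,\dots,\Phi_{\star}C_{n-2}}\right)$ with Hamiltonian $\Phi_{\star}H=\Phi_{\star}C_{n-1}$ and $\nu_{\Phi}=\Phi_{\star}\nu\cdot\Phi_{\star}\operatorname{Jac}(\Phi)$ given by Proposition~\ref{lacc}.

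Finally, the explicit expression~\eqref{vpush} is read off by specializing the component formula $X_i=\nu\cdot\partial(C_1,\dots,C_{n-2},x_i,H)/\partial(x_1,\dots,x_n)$ --- valid for any Hamiltonian system realized as in~\eqref{systy} --- to the Poisson manifold $\left(W,\{\cdot,\cdot\}_{\nu_{\Phi};\Phi_{\star}C_1,\dots,\Phi_{\star}C_{n-2}}\right)$ with coordinates $(y_1,\dots,y_n)$, Casimirs $\Phi_{\star}C_1,\dots,\Phi_{\star}C_{n-2}$, rescaling $\nu_{\Phi}$ and Hamiltonian $\Phi_{\star}H$. I do not expect a genuine obstacle here: once Proposition~\ref{lacc} is available the argument is pure bookkeeping. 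The only point that requires care is keeping the two conventions for $\Phi_{\star}$ consistent --- $\Phi_{\star}F=F\circ\Phi^{-1}$ on functions versus the push-forward $\Phi_{\star}X=(T\Phi\circ X)\circ\Phi^{-1}$ on vector fields --- and checking that the single Jacobian factor appearing in $\nu_{\Phi}$ is precisely the one forced by~\eqref{psdif}, so that ``$\nu_{\Phi}$'' in the Poisson bracket and ``$\nu_{\Phi}$'' in~\eqref{vpush} denote the same function.
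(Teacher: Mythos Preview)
Your proposal is correct and follows essentially the same route as the paper: the theorem is obtained directly from Proposition~\ref{lacc} by noting that a Poisson isomorphism intertwines Hamiltonian vector fields, $\Phi_{\star}X_H=X_{\Phi_{\star}H}$, and then reading off the component formula for $X_{\Phi_{\star}H}$ in the target bracket. In fact your write-up spells out the verification of $(\Phi_{\star}X_H)(g)=X_{\Phi_{\star}H}(g)$ via~\eqref{psdif} more explicitly than the paper, which simply asserts this consequence of Proposition~\ref{lacc} in the paragraph preceding the theorem.
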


The above theorem provides a useful method to construct (local) normal forms of a real analytic completely integrable system. Note that Theorem \eqref{tsv} remains valid also for weaker classes of differentiability, the reason we state it in the analytic category regards only its applications in the next sections of this article. 

Let us give now two (local) normal forms of a real analytic completely integrable system, the first one, also called the Darboux normal form, being crucial for the main purpose of this article. For another approach of the Darboux normal form see, e.g., \cite{berme1}.  

\begin{theorem}\label{darbnf}
Let $$X = \sum_{i=1}^{n}\nu \cdot \dfrac{\partial(C_1,\dots,C_{n-2},x_i,H)}{\partial(x_1,\dots,x_n)}\cdot\dfrac{\partial}{\partial{x_i}},$$ be the vector field associated to the real analytic completely integrable system \eqref{sys}, written as a Hamiltonian dynamical system of the form \eqref{systy}, with Hamiltonian $H:=C_{n-1}$, defined on the Poisson manifold $\left(\Omega,\{\cdot,\cdot\}_{\nu;C_1,\dots,C_{n-2}}\right)$. Let $(\Omega^{\prime},\Phi_1,\Phi_2)$ be a triple consisting of an open subset  $\Omega^{\prime}\subseteq \Omega$, and two analytic functions $\Phi_1,\Phi_2 \in\mathcal{C}^{\omega}(\Omega^{\prime},\mathbb{R})$, such that the map $\Phi:\Omega^{\prime}\rightarrow W^{\prime}=\Phi(\Omega^{\prime})$, given by $\Phi=(\Phi_1,\Phi_2,C_1,\dots,C_{n-2})$, is an analytic diffeomorphism. 

Then $\Phi_{\star}X$, the push forward of the vector field $X$ by $\Phi$, is a Hamiltonian vector field, with Hamiltonian $\Phi_{\star}H=\Phi_{\star}C_{n-1}$, defined on the Poisson manifold $\left(W^{\prime},\{\cdot,\cdot\}_{\nu_{\Phi};\Phi_{\star}C_1,\dots,\Phi_{\star}C_{n-2}}\right)$, and has the expression
\begin{equation}\label{darb}
\Phi_{\star}X = \nu_{\Phi}\cdot\left[ \dfrac{\partial(\Phi_{\star}H)}{\partial y_2}\cdot\dfrac{\partial}{\partial y_1}-\dfrac{\partial(\Phi_{\star}H)}{\partial y_1}\cdot\dfrac{\partial}{\partial y_2}\right],
\end{equation}
where $\nu_{\Phi}=\Phi_{\star}\nu\cdot \Phi_{\star}\operatorname{Jac}(\Phi)$, and $(y_1,\dots,y_n)=\Phi(x_1,\dots,x_n)$, denote the local coordinates on the domain $W^{\prime}$.
\end{theorem}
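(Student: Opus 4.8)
The plan is to deduce the Darboux normal form directly from Theorem~\eqref{tsv}. By hypothesis $\Phi=(\Phi_1,\Phi_2,C_1,\dots,C_{n-2})$ is an analytic diffeomorphism of $\Omega^{\prime}$ onto $W^{\prime}$, so Theorem~\eqref{tsv} applies verbatim and already yields that $\Phi_{\star}X$ is Hamiltonian on $\left(W^{\prime},\{\cdot,\cdot\}_{\nu_{\Phi};\Phi_{\star}C_1,\dots,\Phi_{\star}C_{n-2}}\right)$ with Hamiltonian $\Phi_{\star}H=\Phi_{\star}C_{n-1}$, and that it is given by formula \eqref{vpush}. Thus the only thing left to do is to simplify the $n$ Jacobian determinants occurring in \eqref{vpush}, exploiting the particular form of $\Phi$.

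The key observation is that the last $n-2$ components of $\Phi$ are precisely $C_1,\dots,C_{n-2}$; hence, writing $(y_1,\dots,y_n)=\Phi(x_1,\dots,x_n)$ for the coordinates on $W^{\prime}$, one has $\Phi_{\star}C_j=C_j\circ\Phi^{-1}=y_{j+2}$ for each $j\in\{1,\dots,n-2\}$, i.e. the push-forward of each Casimir is simply the corresponding coordinate function on $W^{\prime}$. Substituting this into \eqref{vpush}, the $i$-th Jacobian becomes $\dfrac{\partial(y_3,\dots,y_n,y_i,\Phi_{\star}H)}{\partial(y_1,\dots,y_n)}$, i.e. the determinant of the $n\times n$ matrix whose first $n-2$ rows are the standard basis vectors $e_3,\dots,e_n$ (the gradients, with respect to $y$, of the coordinate functions $y_3,\dots,y_n$), whose $(n-1)$-st row is $e_i$, and whose last row is $\nabla(\Phi_{\star}H)$.

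From here the computation is routine. If $i\in\{3,\dots,n\}$ the matrix has two identical rows, so its determinant vanishes and only the terms $i=1$ and $i=2$ survive. For $i=1$ (resp. $i=2$) I would expand the determinant along the single column that meets only the last row, namely column $2$ (resp. column $1$), which leaves a permutation-matrix minor of size $n-1$; tracking the sign of that permutation, the determinant equals $\partial(\Phi_{\star}H)/\partial y_2$ when $i=1$ and $-\,\partial(\Phi_{\star}H)/\partial y_1$ when $i=2$. Collecting these two surviving terms gives exactly the right-hand side of \eqref{darb}. The only step requiring genuine attention is this sign bookkeeping in the two $(n-1)\times(n-1)$ minors; apart from that, the statement is an immediate specialization of Theorem~\eqref{tsv}.
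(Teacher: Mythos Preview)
Your proposal is correct and follows essentially the same approach as the paper: apply Theorem~\eqref{tsv}, use $\Phi_{\star}C_j=y_{j+2}$ to reduce each Jacobian to $\partial(y_3,\dots,y_n,y_i,\Phi_{\star}H)/\partial(y_1,\dots,y_n)$, observe that the terms with $i\geq 3$ vanish by a repeated row, and read off the two surviving partial derivatives. The paper's proof is slightly terser about the determinant evaluation, but the argument is identical.
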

\begin{proof}
From Theorem \eqref{tsv} we get that the push forward of the vector field $X$ by $\Phi$, is a Hamiltonian vector field with Hamiltonian $\Phi_{\star}H=\Phi_{\star}C_{n-1}$, defined on the Poisson manifold $\left(W^{\prime},\{\cdot,\cdot\}_{\nu_{\Phi};\Phi_{\star}C_1,\dots,\Phi_{\star}C_{n-2}}\right)$, and given by:
\begin{equation}\label{erf}
\Phi_{\star}X = \sum_{i=1}^{n}\nu_{\Phi} \cdot \dfrac{\partial(\Phi_{\star}C_1,\dots,\Phi_{\star}C_{n-2},y_i,\Phi_{\star}H)}{\partial(y_1,\dots,y_n)}\cdot\dfrac{\partial}{\partial{y_i}}.
\end{equation}
Since $\Phi=(\Phi_1,\Phi_2,C_1,\dots,C_{n-2})$, one obtain that for any $(y_1,\dots,y_n)\in W^{\prime}$ we have $\Phi_{\star}C_1(y_1,\dots,y_n)=y_3,\dots,\Phi_{\star}C_{n-2}(y_1,\dots,y_n)=y_n$, and consequently the expression \eqref{erf} becomes:
\begin{align*}
\Phi_{\star}X &= \sum_{i=1}^{n}\nu_{\Phi} \cdot \dfrac{\partial(\Phi_{\star}C_1,\dots,\Phi_{\star}C_{n-2},y_i,\Phi_{\star}H)}{\partial(y_1,\dots,y_n)}\cdot\dfrac{\partial}{\partial{y_i}}=\sum_{i=1}^{n}\nu_{\Phi} \cdot \dfrac{\partial(y_3,\dots,y_n,y_i,\Phi_{\star}H)}{\partial(y_1,\dots,y_n)}\cdot\dfrac{\partial}{\partial{y_i}}\\
&=\nu_{\Phi} \cdot \left[\dfrac{\partial(y_3,\dots,y_n,y_1,\Phi_{\star}H)}{\partial(y_1,\dots,y_n)}\cdot\dfrac{\partial}{\partial{y_1}}+ \dfrac{\partial(y_3,\dots,y_n,y_2,\Phi_{\star}H)}{\partial(y_1,\dots,y_n)}\cdot\dfrac{\partial}{\partial{y_2}}\right]\\
&=\nu_{\Phi}\cdot\left[ \dfrac{\partial(\Phi_{\star}H)}{\partial y_2}\cdot\dfrac{\partial}{\partial y_1}-\dfrac{\partial(\Phi_{\star}H)}{\partial y_1}\cdot\dfrac{\partial}{\partial y_2}\right].
\end{align*}
\end{proof}

Next local normal form provides a \textit{Flow Box Theorem} for the class of completely integrable systems.

\begin{theorem}\label{flowbox}
Let $$X = \sum_{i=1}^{n}\nu \cdot \dfrac{\partial(C_1,\dots,C_{n-2},x_i,H)}{\partial(x_1,\dots,x_n)}\cdot\dfrac{\partial}{\partial{x_i}},$$ be the vector field associated to the real analytic completely integrable system \eqref{sys}, written as a Hamiltonian dynamical system of the form \eqref{systy}, with Hamiltonian $H:=C_{n-1}$, defined on the Poisson manifold $\left(\Omega,\{\cdot,\cdot\}_{\nu;C_1,\dots,C_{n-2}}\right)$. Let $(\Omega^{\prime\prime},\Phi_1)$ be a pair consisting of an open subset  $\Omega^{\prime\prime}\subseteq \Omega$, and an analytic function $\Phi_1 \in\mathcal{C}^{\omega}(\Omega^{\prime\prime},\mathbb{R})$, such that the map $\Phi:\Omega^{\prime\prime}\rightarrow W^{\prime\prime}=\Phi(\Omega^{\prime\prime})$, given by $\Phi=(\Phi_1,H,C_1,\dots,C_{n-2})$, is an analytic diffeomorphism. 

Then $\Phi_{\star}X$, the push forward of the vector field $X$ by $\Phi$, is a Hamiltonian vector field, with Hamiltonian $\Phi_{\star}H=\Phi_{\star}C_{n-1}$, defined on the Poisson manifold $\left(W^{\prime\prime},\{\cdot,\cdot\}_{\nu_{\Phi};\Phi_{\star}C_1,\dots,\Phi_{\star}C_{n-2}}\right)$, and has the expression
\begin{equation}\label{fbox}
\Phi_{\star}X = \nu_{\Phi}\cdot\dfrac{\partial}{\partial y_1},
\end{equation}
where $\nu_{\Phi}=\Phi_{\star}\nu\cdot \Phi_{\star}\operatorname{Jac}(\Phi)$, and $(y_1,\dots,y_n)=\Phi(x_1,\dots,x_n)$, denote the local coordinates on the domain $W^{\prime\prime}$.
\end{theorem}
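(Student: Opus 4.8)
The plan is to obtain Theorem \ref{flowbox} as an immediate specialization of the Darboux normal form, Theorem \ref{darbnf}, rather than recomputing the push-forward by hand. The point is that the hypothesis here is exactly the hypothesis of Theorem \ref{darbnf} in the special case where one takes $\Phi_2 := H = C_{n-1}$: the map $\Phi = (\Phi_1, H, C_1, \dots, C_{n-2})$ is of the form $(\Phi_1, \Phi_2, C_1, \dots, C_{n-2})$ with $\Phi_2 = H$, and it is assumed to be an analytic diffeomorphism onto $W^{\prime\prime} = \Phi(\Omega^{\prime\prime})$. Hence Theorem \ref{darbnf} applies verbatim and already yields that $\Phi_\star X$ is Hamiltonian with Hamiltonian $\Phi_\star H = \Phi_\star C_{n-1}$ on the Poisson manifold $(W^{\prime\prime}, \{\cdot,\cdot\}_{\nu_\Phi; \Phi_\star C_1, \dots, \Phi_\star C_{n-2}})$, together with the expression \eqref{darb}.

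First I would identify $\Phi_\star H$ in the new coordinates. Since $y_2 = H(x_1, \dots, x_n)$ is the second component of $\Phi$, for every $\overline{y} = \Phi(\overline{x}) \in W^{\prime\prime}$ one has $(\Phi_\star H)(\overline{y}) = H(\Phi^{-1}(\overline{y})) = H(\overline{x}) = y_2$; that is, $\Phi_\star H$ is simply the second coordinate function on $W^{\prime\prime}$. Consequently $\partial(\Phi_\star H)/\partial y_2 \equiv 1$ and $\partial(\Phi_\star H)/\partial y_1 \equiv 0$ throughout $W^{\prime\prime}$. This is the same elementary observation — applied here to $H$ in place of the Casimirs $C_1, \dots, C_{n-2}$ — that was used inside the proof of Theorem \ref{darbnf} to collapse the Jacobians.

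Then I would just substitute these two identities into \eqref{darb}:
$$\Phi_\star X = \nu_\Phi \cdot \left[ \dfrac{\partial(\Phi_\star H)}{\partial y_2} \cdot \dfrac{\partial}{\partial y_1} - \dfrac{\partial(\Phi_\star H)}{\partial y_1} \cdot \dfrac{\partial}{\partial y_2} \right] = \nu_\Phi \cdot \dfrac{\partial}{\partial y_1},$$
which is exactly \eqref{fbox}. Together with the Hamiltonian-structure statement inherited from Theorem \ref{darbnf}, this completes the argument.

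I do not expect any genuine obstacle; the only step deserving a line of care is precisely the verification that $\Phi_\star H$ coincides with the coordinate function $y_2$. If instead one wants a proof that does not invoke Theorem \ref{darbnf}, the alternative is to feed $\Phi_\star C_k = y_{k+2}$ and $\Phi_\star H = y_2$ directly into the expression \eqref{vpush} of Theorem \ref{tsv}: for $i \ge 3$ the Jacobian $\partial(y_3, \dots, y_n, y_i, y_2)/\partial(y_1, \dots, y_n)$ has a repeated argument and vanishes, for $i = 2$ it again has a repeated argument ($y_2$) and vanishes, and for $i = 1$ it equals $\partial(y_3, \dots, y_n, y_1, y_2)/\partial(y_1, \dots, y_n)$, the determinant of the permutation carrying $(y_1, \dots, y_n)$ to $(y_3, \dots, y_n, y_1, y_2)$, i.e. the square of an $n$-cycle and therefore of sign $+1$; this again recovers \eqref{fbox}.
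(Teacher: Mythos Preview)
Your proposal is correct and follows essentially the same approach as the paper: the paper's proof simply says to mimic the proof of Theorem \ref{darbnf} with the new choice $\Phi=(\Phi_1,H,C_1,\dots,C_{n-2})$, noting that then $\Phi_{\star}H=y_2$ and $\Phi_{\star}C_i=y_{i+2}$, which is exactly the observation you isolate. Your primary route---invoking Theorem \ref{darbnf} as a black box with $\Phi_2:=H$ and then substituting $\Phi_{\star}H=y_2$ into \eqref{darb}---is a slightly cleaner packaging of the same idea, and your alternative via \eqref{vpush} is precisely what ``mimetically'' means in the paper.
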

\begin{proof}
The proof follows mimetically that of Theorem \eqref{darbnf}, the only difference being the definition of $\Phi:=(\Phi_1,H,C_1,\dots,C_{n-2})$, which implies that $$\Phi_{\star}H(y_1,\dots,y_n)=y_2,\Phi_{\star}C_1(y_1,\dots,y_n)=y_3,\dots,\Phi_{\star}C_{n-2}(y_1,\dots,y_n)=y_n,$$ for any $(y_1,\dots,y_n)\in W^{\prime\prime}$.
\end{proof}

\begin{remark}
In order to obtain a "clean" normal form of the dynamical systems generated by the local normal forms \eqref{darb} and \eqref{fbox}  of the vector field $X$, one need to perform the time reparameterization $\mathrm{d}s=\nu_{\Phi}(\overline{y})\mathrm{d}t$, where $s$ denotes the new time coordinate (provided that $\nu_{\Phi}$ is nonvanishing).  
\end{remark}

\section{Limit cycles bifurcations of perturbed planar Hamiltonian systems}

The aim of this section is to recall one of the main technique used in the study of \textit{limit cycles} (i.e., periodic orbits that are the $\alpha-$limit set or the $\omega-$limit set of some points outside the orbit) bifurcations of planar analytic Hamiltonian dynamical systems. Note that in the case of planar systems (or dynamical systems modeled on a two-dimensional manifold), a limit cycle is an isolated periodic orbit.

We are interested in limit cycles bifurcations of planar analytic Hamiltonian dynamical systems, since by the local normal form given in Theorem \eqref{darbnf}, the (local) nontrivial dynamics of an $n-$dimensional analytic completely integrable system, is actually described by that of a planar analytic Hamiltonian dynamical system. Hence, one can use the results in the planar theory in order to analyze (locally) similar bifurcation phenomena for $n-$dimensional analytic completely integrable systems. 

In order to recall the main bifurcation result for planar Hamiltonian dynamical systems, let us introduce some notations from \cite{cli}, adapted to the real analytic category.

Let $H:D \rightarrow \mathbb{R}$ be a real analytic function defined on a domain $D\subseteq \mathbb{R}^2$, and let 
$$
X_H=\dfrac{\partial H}{\partial y}\cdot\dfrac{\partial}{\partial x} - \dfrac{\partial H}{\partial x}\cdot\dfrac{\partial}{\partial y},
$$
be the associated Hamiltonian vector field with respect to the canonical symplectic form $\omega_{\operatorname{can}}:=\mathrm{d}x\wedge\mathrm{d}y$, i.e., $\iota_{X_H}\omega_{\operatorname{can}}=\mathrm{d}H$.

Consider an analytic perturbation of the vector field $X_H$, given by the vector field 
\begin{equation}\label{phs}
X_{\varepsilon}:=X_H + \varepsilon \left(f \ \dfrac{\partial}{\partial x} + g \ \dfrac{\partial}{\partial y}\right),
\end{equation}
where $f,g:U\rightarrow \mathbb{R}$ are two real analytic functions, and $\varepsilon >0$ is a small perturbation parameter.

\medskip
\textbf{Hypothesis}: \textit{Suppose there exists a family of periodic orbits of the Hamiltonian vector filed $X_H$, $\gamma_h \subseteq H^{-1}(\{h\})$, which depends continuously on $h\in (a,b)$, for some $a,b\in\mathbb{R}$.}
\medskip

\medskip
\textbf{Problem}: \textit{Determine if there exists $h\in(a,b)$, and a periodic orbit $\Gamma_{\varepsilon}$ of the vector field $X_{\varepsilon}$, such that $\lim_{\varepsilon \rightarrow 0}\Gamma_{\varepsilon} =\gamma_{h}$, in the sense of Hausdorff distance.}
\medskip

Let us recall from \cite{cli} the classical approach of this problem, based on the Poincar\'e--Pontryagin Theorem. In order to do that, consider a segment which is transversal to each periodic orbit $\gamma_h \subseteq H^{-1}(\{h\})$, $h\in (a,b)$, of the Hamiltonian vector field $X_H$. Choosing the values of the Hamiltonian $H$ in order to parameterize the transversal segment, one denotes by $\eta_{\varepsilon}(h)$, the coordinate of the next intersection with the transverse segment, of the orbit of $X_{\varepsilon}$, starting from the point of the transverse segment with coordinate $h$. If one denotes by $\delta(h,\varepsilon):=\eta_{\varepsilon}(h) - h$, the \textit{displacement function}, then the zeros of $\delta(\cdot,\varepsilon)$ correspond to periodic orbits of the vector field $X_{\varepsilon}$.

The following result, called the Poincar\'e--Pontryagin Theorem, provides an expression of the displacement function for $\varepsilon$ approaching zero. For details regarding the proof of this theorem see, e.g., \cite{poincare}, \cite{pontryagin}, \cite{cli}.
\begin{theorem}[Poincar\'e--Pontryagin]\label{PPT} In the above hypothesis, we have that
$$
\delta(h,\varepsilon)=\varepsilon\left[I(h)+\varepsilon \varphi(h,\varepsilon)\right], \ \ \text{as} \ \  \varepsilon \rightarrow 0,
$$
where
$$
I(h):=\oint_{\gamma_{h}}-f(x,y)\mathrm{d}y + g(x,y)\mathrm{d}x,
$$
and $\varphi(h,\varepsilon)$ is a real analytic function, uniformly bounded for $(h,\varepsilon)$ in a compact region near $(h,0)$, $h\in(a,b)$.

Otherwise said, $I=I(h)$ is the first order approximation with respect to $\varepsilon$ of the displacement function $\delta=\delta(h,\varepsilon)$. 
\end{theorem}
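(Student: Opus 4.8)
The plan is to reduce the computation of the displacement function $\delta(h,\varepsilon)$ to a line integral over the unperturbed closed orbit $\gamma_h$. First I would set up the geometry precisely: fix a point $p_h$ on the transverse segment with $H(p_h)=h$, let $t\mapsto \Gamma_\varepsilon(t)$ be the trajectory of $X_\varepsilon$ with $\Gamma_\varepsilon(0)=p_h$, and let $T_\varepsilon(h)$ be the first return time to the transversal. Then I would express the change in the value of $H$ along this trajectory as
\begin{equation*}
H(\Gamma_\varepsilon(T_\varepsilon(h)))-H(p_h)=\int_0^{T_\varepsilon(h)}\frac{\mathrm{d}}{\mathrm{d}t}\,H(\Gamma_\varepsilon(t))\,\mathrm{d}t
=\int_0^{T_\varepsilon(h)}\langle\nabla H(\Gamma_\varepsilon(t)),X_\varepsilon(\Gamma_\varepsilon(t))\rangle\,\mathrm{d}t.
\end{equation*}
Since $\langle\nabla H,X_H\rangle=0$ identically, only the $\varepsilon$-perturbation survives in the integrand, giving $\langle\nabla H,X_\varepsilon\rangle=\varepsilon\bigl(\tfrac{\partial H}{\partial x}f+\tfrac{\partial H}{\partial y}g\bigr)$. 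Because $H$ parameterizes the transversal, the left-hand side equals $\eta_\varepsilon(h)-h=\delta(h,\varepsilon)$ up to a smooth nonvanishing reparameterization factor coming from the choice of coordinate on the segment; by choosing $H$ itself as the coordinate this factor is $1$ to leading order, so $\delta(h,\varepsilon)=\varepsilon\int_0^{T_\varepsilon(h)}\bigl(H_x f+H_y g\bigr)(\Gamma_\varepsilon(t))\,\mathrm{d}t$.

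Next I would pass to the limit $\varepsilon\to 0$. By continuous dependence of solutions on parameters (here real-analytic dependence, since $X_\varepsilon$ depends analytically on $\varepsilon$ and the data are $\mathcal C^\omega$), $\Gamma_\varepsilon(t)\to\gamma_h(t)$ uniformly on $[0,T_0(h)]$ and $T_\varepsilon(h)\to T_0(h)$, the period of $\gamma_h$. Hence the integral converges to $\int_0^{T_0(h)}\bigl(H_x f+H_y g\bigr)(\gamma_h(t))\,\mathrm{d}t$, and writing $\varphi(h,\varepsilon)$ for the (analytic, locally bounded) remainder gives the stated expansion $\delta(h,\varepsilon)=\varepsilon[I(h)+\varepsilon\varphi(h,\varepsilon)]$. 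It then remains to identify the leading coefficient with the advertised loop integral. Along $\gamma_h$ we have $\dot x=H_y$, $\dot y=-H_x$, so $H_x\,\mathrm{d}t=-\,\mathrm{d}y$ and $H_y\,\mathrm{d}t=\mathrm{d}x$ along the orbit; therefore
\begin{equation*}
\int_0^{T_0(h)}\bigl(H_x f+H_y g\bigr)(\gamma_h(t))\,\mathrm{d}t
=\oint_{\gamma_h}\bigl(-f\,\mathrm{d}y+g\,\mathrm{d}x\bigr)=I(h),
\end{equation*}
which is the claimed formula. Analyticity and uniform boundedness of $\varphi$ on compact $(h,\varepsilon)$-regions near $(h,0)$ follow from the analytic dependence of the Poincar\'e return map on $(h,\varepsilon)$ and a Taylor expansion in $\varepsilon$ with remainder.

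The main obstacle is the careful justification of the reparameterization step: one must show that using the Hamiltonian value $h$ as the coordinate on the transversal segment makes the map $h\mapsto H(\Gamma_\varepsilon(T_\varepsilon(h)))$ coincide with $\eta_\varepsilon(h)$, and control the discrepancy between the return time $T_\varepsilon(h)$ and the unperturbed period uniformly in $h$ on compact subintervals of $(a,b)$; this requires transversality of the segment to every $\gamma_h$ and the implicit function theorem to define $T_\varepsilon(h)$ as an analytic function. Once the return map is known to be analytic jointly in $(h,\varepsilon)$ (which is where the real-analytic hypotheses on $H$, $f$, $g$ are used), the expansion and the bound on $\varphi$ are routine. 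I would therefore spend most of the effort on the well-posedness of the Poincar\'e map and its analytic dependence on $\varepsilon$, and treat the integral identities above as the computational core.
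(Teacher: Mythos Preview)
The paper does not prove this theorem; it is stated as a classical result and the reader is referred to \cite{poincare}, \cite{pontryagin}, and \cite{cli} for a proof. Your outline is the standard argument found in those references (in particular in Christopher--Li \cite{cli}): compute the variation of $H$ along the perturbed orbit, use $\langle\nabla H,X_H\rangle=0$ to isolate the $\varepsilon$-order term, and convert the time integral into the line integral via $\dot x=H_y$, $\dot y=-H_x$. There is nothing to compare; your plan is correct and is exactly the textbook proof the paper is citing.
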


In order to give an answer for the proposed problem, let us recall from \cite{cli} a precise definition of what is meant by bifurcation of limit cycles from a certain periodic orbit.

\begin{definition}[\cite{cli}]
In the above hypothesis, if there exists $h^{\star}\in(a,b)$ and $\varepsilon^{\star}>0$ such that the perturbed vector field $X_{\varepsilon}$ has a limit cycle $\Gamma_{\varepsilon}$, for each $0<\varepsilon<\varepsilon^{\star}$, and $\lim_{\varepsilon\rightarrow 0}\Gamma_{\varepsilon}=\gamma_{h^{\star}}$ in the sense of Hausdorff distance, then one says that the limit cycle $\Gamma_{\varepsilon}$ of the perturbed vector field $X_{\varepsilon}$, \textbf{bifurcates from the periodic orbit} $\gamma_{h^{\star}}$ of the unperturbed vector field $X_H$. 

Moreover, one say that a limit cycle $\Gamma$ of the perturbed vector field $X_{\varepsilon}$ \textbf{bifurcates form the annulus} $\bigcup_{h\in(a,b)}\gamma_h$ associated with the Hamiltonian vector field $X_H$, if there exists $h^{\prime} \in (a,b)$ such that $\Gamma$ bifurcates from the periodic orbit $\gamma_{h^{\prime}}$.
\end{definition}

\begin{theorem}[\cite{cli}]\label{teorfol}
In the above hypothesis, let us suppose moreover that $I=I(h)$ is not identically zero on $(a,b)$. Then the following statements hold true.
\begin{itemize}
\item If the perturbed vector field $X_{\varepsilon}$ has a limit cycle bifurcating from the periodic orbit $\gamma_{h^{\star}}$ of $X_H$, then $I(h^{\star})=0$.
\item If there exists $h^{\star}\in (a,b)$ a simple zero of $I$ (i.e., $I(h^{\star})=0$ and $I^{\prime}(h^{\star})\neq 0$), then the perturbed vector field $X_{\varepsilon}$ has a unique limit cycle bifurcating from the periodic orbit $\gamma_{h^{\star}}$ of $X_H$, and moreover this limit cycle is hyperbolic.
\item If there exists $h^{\star}\in (a,b)$, a zero of order $k$ of $I$ (i.e., $I(h^{\star})=I^{\prime}(h^{\star})=\dots=I^{(k-1)}(h^{\star})=0$ and $I^{(k)}(h^{\star})\neq 0$), then the perturbed vector field $X_{\varepsilon}$ has at most $k$ limit cycles (counting also the multiplicities) bifurcating from the periodic orbit $\gamma_{h^{\star}}$ of $X_H$.
\item The total number (counting also the multiplicities) of limit cycles of the perturbed vector field $X_{\varepsilon}$, bifurcating from the annulus $\bigcup_{h\in(a,b)}\gamma_h$ of $X_H$, is bounded by the maximum number (if finite) of isolated zeros (counting also the multiplicities) of $I(h)$ for $h\in (a,b)$.
\end{itemize}
\end{theorem}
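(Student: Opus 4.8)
The plan is to deduce all four assertions from the Poincar\'e--Pontryagin expansion of Theorem \ref{PPT}. Parameterize the transversal segment by the value $h$ of the Hamiltonian, and for $\varepsilon>0$ set
\[
F(h,\varepsilon):=\frac{\delta(h,\varepsilon)}{\varepsilon}=I(h)+\varepsilon\,\varphi(h,\varepsilon).
\]
By Theorem \ref{PPT}, $F$ is real analytic in $(h,\varepsilon)$ near $(h,0)$ (with $\varphi$ bounded) and $F(h,0)=I(h)$. A periodic orbit of $X_{\varepsilon}$ crossing the transversal at the point with coordinate $h$ is precisely a zero of $\delta(\cdot,\varepsilon)$, hence for $\varepsilon>0$ a zero of $F(\cdot,\varepsilon)$; its multiplicity as a limit cycle is, by definition, the order of that zero. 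Thus the theorem reduces to a statement about how the zeros of the analytic family $F(\cdot,\varepsilon)$ that lie near a point $h^{\star}$ relate, as $\varepsilon\to 0$, to the zeros of the limit function $I$; since $I\not\equiv 0$ on $(a,b)$, every zero of $I$ has finite order.

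For the first assertion, if a limit cycle $\Gamma_{\varepsilon}$ bifurcates from $\gamma_{h^{\star}}$ then $\Gamma_{\varepsilon}\to\gamma_{h^{\star}}$ in Hausdorff distance, so its crossing coordinate $h_{\varepsilon}$ satisfies $h_{\varepsilon}\to h^{\star}$; passing to the limit in $0=F(h_{\varepsilon},\varepsilon)=I(h_{\varepsilon})+\varepsilon\varphi(h_{\varepsilon},\varepsilon)$, and using continuity of $I$ and boundedness of $\varphi$, gives $I(h^{\star})=0$. For the second assertion, $I(h^{\star})=0$ and $I'(h^{\star})\neq 0$ mean $F(h^{\star},0)=0$ and $\partial_{h}F(h^{\star},0)=I'(h^{\star})\neq 0$, so the analytic implicit function theorem produces a unique analytic branch $\varepsilon\mapsto h(\varepsilon)$ with $h(0)=h^{\star}$ and $F(h(\varepsilon),\varepsilon)=0$; since $\partial_{h}F$ does not vanish on a full neighborhood of $(h^{\star},0)$, for small $\varepsilon>0$ the map $F(\cdot,\varepsilon)$ is strictly monotone near $h^{\star}$, so $h(\varepsilon)$ is its only zero there and yields the unique bifurcating limit cycle. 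Hyperbolicity follows because the return map is $\eta_{\varepsilon}(h)=h+\varepsilon F(h,\varepsilon)$, whence $\eta_{\varepsilon}'(h(\varepsilon))=1+\varepsilon\,\partial_{h}F(h(\varepsilon),\varepsilon)=1+\varepsilon I'(h^{\star})+o(\varepsilon)\neq 1$ for $\varepsilon>0$ small.

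For the third assertion, apply the Weierstrass preparation theorem to a holomorphic extension of $F$ at $(h^{\star},0)$: since $F(\cdot,0)=I$ vanishes to order exactly $k$ at $h^{\star}$, there is a neighborhood of $(h^{\star},0)$ on which
\[
F(h,\varepsilon)=\bigl[(h-h^{\star})^{k}+a_{k-1}(\varepsilon)(h-h^{\star})^{k-1}+\dots+a_{0}(\varepsilon)\bigr]\cdot V(h,\varepsilon),
\]
with $a_{0},\dots,a_{k-1}$ analytic, $a_{j}(0)=0$, and $V$ analytic and nowhere zero. Hence for each fixed small $\varepsilon$ the zeros of $F(\cdot,\varepsilon)$ near $h^{\star}$ are exactly the roots of this degree-$k$ Weierstrass polynomial, so there are at most $k$ of them counted with multiplicity; translated back, at most $k$ limit cycles of $X_{\varepsilon}$ (with multiplicity) bifurcate from $\gamma_{h^{\star}}$. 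For the fourth assertion, suppose $I$ has only finitely many zeros on $(a,b)$, of total multiplicity $M$. By the first assertion, any limit cycle bifurcating from the annulus $\bigcup_{h\in(a,b)}\gamma_{h}$ bifurcates from some $\gamma_{h'}$ with $I(h')=0$, i.e.\ from one of these finitely many points; by the third assertion the number bifurcating from $\gamma_{h'}$ is at most the order of $h'$ as a zero of $I$; summing over the zeros gives the bound $M$.

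The main obstacle is the third assertion: one needs control of the number of zeros of $F(\cdot,\varepsilon)$ in a fixed neighborhood of $h^{\star}$ that is \emph{uniform} in the small parameter $\varepsilon$, and this is exactly what the preparation theorem supplies once $F$ is recognized as an analytic deformation of $I$. The remaining ingredient — that the multiplicity of a bifurcating limit cycle equals the order of the associated zero of the displacement function — is standard and already built into the definition of $\delta$; everything else is the implicit function theorem together with elementary continuity arguments.
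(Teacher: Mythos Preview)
The paper does not actually prove this theorem: it is quoted from \cite{cli} and stated without argument, so there is no ``paper's own proof'' to compare against. Your proposal is a correct and standard reconstruction of the classical argument (essentially the one in Christopher--Li): reduce everything to the zeros of the analytic family $F(h,\varepsilon)=I(h)+\varepsilon\varphi(h,\varepsilon)$, handle the simple-zero case by the implicit function theorem, and control the higher-order case via Weierstrass preparation.

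Two small points worth tightening. First, in the third item you invoke preparation on a holomorphic extension of $F$; this is fine because Theorem~\ref{PPT} gives $\varphi$ real analytic in $(h,\varepsilon)$, so $F$ complexifies near $(h^{\star},0)$, but you should say explicitly that the resulting Weierstrass polynomial has real coefficients (since $F$ is real on real arguments), so that counting its real roots with multiplicity is what bounds the number of real zeros of $F(\cdot,\varepsilon)$ near $h^{\star}$. Second, in the fourth item the bound you obtain is for limit cycles bifurcating from the finitely many $\gamma_{h'}$ with $I(h')=0$; to conclude the bound for the whole annulus you are implicitly using that no limit cycle can bifurcate from a $\gamma_{h'}$ with $I(h')\neq 0$, which is exactly your first item, and that the preparation neighborhoods can be taken on a common $\varepsilon$-interval when there are only finitely many zeros --- both are routine, but worth one sentence each.
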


\begin{remark}
In order to compute the derivative $I^{\prime}=I^{\prime}(h)$, $h\in(a,b)$, of $I(h)=\oint_{\gamma_{h}}-f(x,y)\mathrm{d}y + g(x,y)\mathrm{d}x$ (where $\gamma_h \subseteq H^{-1}(\{h\})$ is a periodic orbit of the planar Hamiltonian vector field $X_H$), one can use the \textit{Gelfand-Leray form} associated with the $2-$form $\mathrm{d}\omega$, where $\omega:= -f(x,y)\mathrm{d}y + g(x,y)\mathrm{d}x$. 

More precisely, if one denotes by $\alpha$ an analytic $1-$form such that 
\begin{equation}\label{GLf}
\mathrm{d}H\wedge\alpha =\mathrm{d}\omega,
\end{equation}
then, for any point which is not a critical point of the Hamiltonian $H$, there exists an open neighborhood on which one can define such a $1-$form $\alpha$ which verifies \eqref{GLf}. Moreover, since such a $1-$form $\alpha$ is uniquely defined modulo $\mathrm{d}H$, then the restriction of $\alpha$ to an arbitrary analytic level manifold $H^{-1}(\{h\})$ is \textbf{uniquely defined}, and is called the \textbf{Gelfand-Leray form} associated with the $2-$form $\mathrm{d}\omega$. For details regarding the Gelfand-Leray form, see e.g., \cite{arnoldd}. 

In the above notations, if $\omega$ has compact support and moreover, the support does not contains any critical point of the Hamiltonian $H$, then
$$
I^{\prime}(h)= \dfrac{\mathrm{d}}{\mathrm{d}h}\oint_{\gamma_{h}}\omega=\oint_{\gamma_{h}}\alpha.
$$ 
\end{remark}

\section{Limit cycles bifurcations of geometrically perturbed completely integrable systems}

The purpose of this section is to study from the geometric point of view the problem of limit cycles bifurcation of perturbed completely integrable systems. By geometric approach of the problem we mean the use of those perturbations which preserves some of the geometry associated with the unperturbed system. This problem was also analyzed in the geometric context for a particular three dimensional completely integrable system, namely Euler's equations of the free rigid body, \cite{bermejo1}. Also a non-geometric version of the problem, using averaging methods was recently analyzed in \cite{bermejo2}.
Our approach is a local one, and makes use of the local normal form introduced in Theorem \eqref{darbnf}. More precisely, in the first part of this section one considers the problem of limit cycles bifurcations in the context of a general type of perturbations which preserves an a-priori fixed symplectic leaf of the Poisson configuration manifold of the unperturbed completely integrable system. In the second part of this section one considers the same problem, this time one considers the class of perturbations that preserves all of the regular symplectic leaves of the Poisson configuration manifold of the unperturbed completely integrable system. Let us start now by constructing the first class of perturbed dynamical systems.

\subsection{Limit cycles bifurcations on a fixed symplectic leaf}

Let $\dot x =X(x)$, be a real analytic completely integrable system defined on a domain $\Omega\subseteq \mathbb{R}^n$. Hence, the vector field $X\in\mathfrak{X}(\Omega)$ admits $n-2$ analytic first integrals, $C_1,\dots,C_{n-2}:\Omega\rightarrow \mathbb{R}$, functionally independent almost everywhere in $\Omega$, with respect to the $n-$dimensional Lebesgue measure. Consequently, for each regular value $(c_1,\dots,c_{n-2})\in\operatorname{Im}(C_1,\dots,C_{n-2})\subseteq \mathbb{R}^{n-2}$, the corresponding real analytic manifold
\begin{equation*}
\Sigma_{c_1,\dots,c_{n-2}}:=\{(x_1,\dots,x_n)\in\Omega \mid C_1(x_1,\dots,x_n )=c_1,\ \dots \ , C_{n-2}(x_1,\dots, x_n)=c_{n-2}\},
\end{equation*}
is a dynamically invariant set of the completely integrable system generated by the vector field $X$. Recall from the second section that the completely integrable system $\dot x =X(x)$, admits a Hamiltonian realization, \eqref{systy}, with Hamiltonian $H=:C_{n-1}$, and Poisson manifold $(\Omega,\{\cdot,\cdot\}_{\nu;C_1,\dots,C_{n-2}})$. Consequently, since the functions $C_1,\dots,C_{n-2}$ form a complete set of Casimir invariants of the Poisson manifold, then the connected components of the manifold $\Sigma_{c_1,\dots,c_{n-2}}$ are two-dimensional regular symplectic leaves of the Poisson manifold $(\Omega,\{\cdot,\cdot\}_{\nu;C_1,\dots,C_{n-2}})$. For the sake of simplicity, we suppose in the sequel that $\Sigma_{c_1,\dots,c_{n-2}}$ is a connected set, and hence it is a regular symplectic leaf (if not, we shall consider instead, a connected component of $\Sigma_{c_1,\dots,c_{n-2}}$).

Let us construct now a general perturbation, $\dot x =X(x)+\varepsilon A(x)$, of the completely integrable system $\dot x =X(x)$, whose restriction to a certain open subset $\Omega^{\prime}\subseteq \Omega$, preserves the corresponding intersection with an \textit{a-priori fixed} symplectic leaf, $\Sigma_{c_1,\dots,c_{n-2}}\cap\Omega^{\prime}$. As any symplectic leaf of the Poisson manifold $(\Omega,\{\cdot,\cdot\}_{\nu;C_1,\dots,C_{n-2}})$ is kept invariant by the dynamics generated by the integrable vector field $X$, the problem reduces to find a general form of the analytic vector field $A$, whose restriction to $\Omega^{\prime}$ is tangent to the open subset $\Sigma_{c_1,\dots,c_{n-2}}\cap\Omega^{\prime}$ of the symplectic leaf $\Sigma_{c_1,\dots,c_{n-2}}$.

In order to do that, one applies a classical criterion that ensures the invariance of the real analytic manifold $\Sigma_{c_1,\dots,c_{n-2}}$ with respect to the dynamics generated by a real analytic vector field $A$. More precisely, $\Sigma_{c_1,\dots,c_{n-2}}$ is invariant with respect to the dynamics generated by the vector field $A$, if there exist some real analytic functions, $\tilde{R}_{i,j}\in\mathcal{C}^{\omega}(\Omega,\mathbb{R})$, $i,j\in\{1,\dots, n-2\}$, such that 
\begin{equation}\label{criterion}
\mathcal{L}_{A} C_i =(C_1 - c_1)\tilde{R}_{i,1} + \dots + (C_{n-2} - c_{n-2})\tilde{R}_{i,n-2},
\end{equation}
for each $i\in\{1,\dots, n-2\}$. 

\textit{Since the main part of this article is concerned with the limit cycles bifurcation problem, and our approach is done in Darboux coordinates (i.e., a set of coordinates generated by a triple $(\Omega^{\prime},\Phi_1,\Phi_2)$, consisting of an open subset $\Omega^{\prime}\subseteq \Omega$, and two analytic functions $\Phi_1,\Phi_2 \in\mathcal{C}^{\omega}(\Omega^{\prime},\mathbb{R})$, such that the map $\Phi:\Omega^{\prime}\rightarrow W^{\prime}=\Phi(\Omega^{\prime})$, given by $\Phi=(\Phi_1,\Phi_2,C_1,\dots,C_{n-2})$, is an analytic diffeomorphism), the vector field $A$ will be constructed in such a way that the perturbed vector field $\Phi_{\star}X + \varepsilon \Phi_{\star}A$ restricted to the two-dimensional subset $\Phi(\Sigma_{c_1,\dots,c_{n-2}}\cap \Omega^{\prime})\subseteq W^{\prime}$, is orbitally equivalent to a general vector field of the type \eqref{phs}}. 

Before starting the construction of $\Phi_{\star}A$, note that $\Phi(\Sigma_{c_1,\dots,c_{n-2}}\cap \Omega^{\prime})$ is exactly the symplectic leaf $\{(y_1,\dots,y_n)\in W^{\prime}\mid y_3=c_1, \dots, y_n =c_{n-2}\}$ of the Poisson manifold $\left(W^{\prime},\{\cdot,\cdot\}_{\nu_{\Phi};\Phi_{\star}C_1,\dots,\Phi_{\star}C_{n-2}}\right)$, where $\nu_{\Phi}=\Phi_{\star}\nu\cdot \Phi_{\star}\operatorname{Jac}(\Phi)$.

In order to get the expression of the perturbation vector field $\Phi_{\star}A$, one rewrites the equations \eqref{criterion} restricted to $\Omega^{\prime}$, as follows
\begin{equation}\label{criterion0}
\langle \nabla C_i, A\rangle =(C_1 - c_1){R}_{i,1}\circ \Phi + \dots + (C_{n-2} - c_{n-2}){R}_{i,n-2}\circ \Phi,
\end{equation}
or equivalently,
\begin{equation}\label{criterion2}
\langle \nabla C_i, A\rangle\circ\Phi^{-1} =(C_1 \circ \Phi^{-1} - c_1){R}_{i,1} + \dots + (C_{n-2}\circ \Phi^{-1} - c_{n-2}){R}_{i,n-2},
\end{equation}
for each $i\in\{1,\dots, n-2\}$, where ${R}_{i,j}:=\tilde{R}_{i,j}\circ \Phi^{-1}\in\mathcal{C}^{\omega}(W^{\prime},\mathbb{R})$.

\textit{Note that condition \eqref{criterion0} implies that the restriction to $\Omega^{\prime}$ of the perturbed vector field $X+\varepsilon A$, is tangent to $\Sigma_{c_1,\dots,c_{n-2}}\cap \Omega^{\prime}$.}

Let us compute now the expression of the perturbed vector field $\Phi_{\star}X + \varepsilon \Phi_{\star}A$. In order to do that, recall first from Theorem \eqref{darbnf} the expression of the Darboux normal form of the vector field $X$ written in Hamiltonian form \eqref{systy}, i.e.,
\begin{equation*}
\Phi_{\star}X = \nu_{\Phi}\cdot\left[ \dfrac{\partial(\Phi_{\star}H)}{\partial y_2}\cdot\dfrac{\partial}{\partial y_1}-\dfrac{\partial(\Phi_{\star}H)}{\partial y_1}\cdot\dfrac{\partial}{\partial y_2}\right],
\end{equation*}
where $\nu_{\Phi}=\Phi_{\star}\nu\cdot \Phi_{\star}\operatorname{Jac}(\Phi)$, and $(y_1,\dots,y_n)=\Phi(x_1,\dots,x_n)$, denote the local coordinates on the domain $W^{\prime}$.

Using the above expression of the vector field $\Phi_{\star}X$, and supposing that $\nu_{\Phi}$ is nonvanishing, one obtains successively the following equalities:
\begin{align*}
&\Phi_{\star}X +\varepsilon \Phi_{\star}A = \nu_{\Phi}\cdot\left[ \dfrac{\partial(\Phi_{\star}H)}{\partial y_2}\cdot\dfrac{\partial}{\partial y_1}-\dfrac{\partial(\Phi_{\star}H)}{\partial y_1}\cdot\dfrac{\partial}{\partial y_2}\right] + \varepsilon  \Biggl( \langle \nabla \Phi_1, A\rangle\circ\Phi^{-1}\cdot\dfrac{\partial}{\partial y_1} \\
&+ \langle \nabla \Phi_2, A\rangle\circ\Phi^{-1}\cdot\dfrac{\partial}{\partial y_2} + \langle \nabla C_1, A\rangle\circ\Phi^{-1}\cdot\dfrac{\partial}{\partial y_3} + \dots + \langle \nabla C_{n-2}, A\rangle\circ\Phi^{-1}\cdot\dfrac{\partial}{\partial y_{n}} \Biggr)\\
&= \nu_{\Phi}\cdot \left[ \dfrac{\partial(\Phi_{\star}H)}{\partial y_2} + \varepsilon \dfrac{\langle \nabla \Phi_1, A\rangle\circ\Phi^{-1}}{\nu_{\Phi}} \right]\cdot\dfrac{\partial}{\partial y_1} + \nu_{\Phi}\cdot \left[ - \dfrac{\partial(\Phi_{\star}H)}{\partial y_1} + \varepsilon \dfrac{\langle \nabla \Phi_2, A\rangle\circ\Phi^{-1}}{\nu_{\Phi}} \right]\cdot\dfrac{\partial}{\partial y_2}\\
&+ \varepsilon \langle \nabla C_1, A\rangle\circ\Phi^{-1} \cdot\dfrac{\partial}{\partial y_3} + \dots + \varepsilon \langle \nabla C_{n-2}, A\rangle\circ\Phi^{-1} \cdot\dfrac{\partial}{\partial y_n}.
\end{align*}

Since $\Phi=(\Phi_1,\Phi_2,C_1,\dots,C_{n-2})$, and $(y_1,\dots,y_n)=\Phi(x_1,\dots,x_n)$, the expression \eqref{criterion2} becomes
\begin{equation*}
\langle \nabla C_i, A\rangle(\Phi^{-1}(y_1,\dots,y_n)) =(y_3 - c_1){R}_{i,1}(y_1,\dots,y_n) + \dots + (y_n - c_{n-2}){R}_{i,n-2}(y_1,\dots,y_n),
\end{equation*}
for each $i\in\{1,\dots, n-2\}$.

Consequently, the expression of the perturbed vector field $\Phi_{\star}X + \varepsilon \Phi_{\star}A$ becomes
\begin{align*}
\Phi_{\star}X & +\varepsilon \Phi_{\star}A = \nu_{\Phi}\cdot \Biggl[ \dfrac{\partial(\Phi_{\star}H)}{\partial y_2} + \varepsilon \dfrac{\langle \nabla \Phi_1, A\rangle\circ\Phi^{-1}}{\nu_{\Phi}} \Biggr]\cdot\dfrac{\partial}{\partial y_1} \\
&+ \nu_{\Phi}\cdot \Biggl[ - \dfrac{\partial(\Phi_{\star}H)}{\partial y_1} + \varepsilon \dfrac{\langle \nabla \Phi_2, A\rangle\circ\Phi^{-1}}{\nu_{\Phi}} \Biggr]\cdot\dfrac{\partial}{\partial y_2}
+ \varepsilon \Biggl[\sum_{j=1}^{n-2}(y_{j+2}-c_{j})R_{1,j}\Biggr]\cdot\dfrac{\partial}{\partial y_3} \\
&+ \dots +  \varepsilon \Biggl[\sum_{j=1}^{n-2}(y_{j+2}-c_{j})R_{n-2,j}\Biggr]\cdot\dfrac{\partial}{\partial y_n}.
\end{align*}

Hence, the restriction to $\Phi(\Sigma_{c_1,\dots,c_{n-2}}\cap \Omega^{\prime}) = \{(y_1,\dots,y_n)\in W^{\prime} \mid y_3=c_1, \dots, y_n =c_{n-2}\}$ of the perturbed vector filed $\Phi_{\star}X + \varepsilon \Phi_{\star}A$, is orbitally equivalent to a general vector field of the type \eqref{phs}, if
\begin{equation}\label{p's}
\langle \nabla \Phi_1, A\rangle\circ\Phi^{-1}=\nu_{\Phi}P_1, \qquad \langle \nabla \Phi_2, A\rangle\circ\Phi^{-1}=\nu_{\Phi}P_2,
\end{equation}
where $P_1,P_2 \in\mathcal{C}^{\omega}(W^{\prime},\mathbb{R})$, are real analytic functions.

More precisely, the restriction to $\Phi(\Sigma_{c_1,\dots,c_{n-2}}\cap \Omega^{\prime})$ of the perturbed vector filed $\Phi_{\star}X + \varepsilon \Phi_{\star}A$, becomes
\begin{align*}
(\Phi_{\star}X +\varepsilon \Phi_{\star}A)|_{\Phi(\Sigma_{c_1,\dots,c_{n-2}}\cap \Omega^{\prime})} &= \nu_{\Phi}\cdot \Biggl[ \dfrac{\partial(\Phi_{\star}H)}{\partial y_2} + \varepsilon P_1 \Biggr] \cdot\dfrac{\partial}{\partial y_1} \\
&+ \nu_{\Phi}\cdot \Biggl[ - \dfrac{\partial(\Phi_{\star}H)}{\partial y_1} + \varepsilon P_2 \Biggr]\cdot\dfrac{\partial}{\partial y_2}.
\end{align*}

\begin{remark}\label{remaA}
Translating the relation \eqref{criterion2} and \eqref{p's} on $\Omega^{\prime}$, one obtains that the restriction to $\Omega^{\prime}$ of the vector field $A\in\mathfrak{X}(\Omega)$ is tangent to the open subset $\Sigma_{c_1,\dots,c_{n-2}}\cap\Omega^{\prime}$ of the regular symplectic leaf $\Sigma_{c_1,\dots,c_{n-2}}$ of the Poisson manifold $(\Omega,\{\cdot,\cdot\}_{\nu;C_1,\dots,C_{n-2}})$, if the following relations hold true:
\begin{align*}
\langle \nabla C_i, A|_{\Omega^{\prime}} \rangle &=(C_1 - c_1){R}_{i,1}\circ \Phi + \dots + (C_{n-2} - c_{n-2}){R}_{i,n-2}\circ \Phi, \ i\in\{1,\dots, n-2 \},\\
\langle \nabla \Phi_1, A|_{\Omega^{\prime}} \rangle &=(\nu_{\Phi}\circ\Phi)( P_1 \circ\Phi), \qquad \langle \nabla \Phi_2, A|_{\Omega^{\prime}} \rangle =(\nu_{\Phi}\circ\Phi)( P_2 \circ \Phi).
\end{align*}
\end{remark}
In order to obtain the expression of the vector field $A|_{\Omega^{\prime}}$ which verifies the above conditions, let us recall from \cite{tudoran1} the following result.
\begin{proposition}[\cite{tudoran1}]\label{formulajgp}
Let $(M,g)$ be an $n-$dimensional real analytic Riemannian manifold. Let $h_1,\dots, h_n, I_1,\dots, I_n \in\mathcal{C}^{\omega}(U,\mathbb{R})$ be a given set of real analytic functions defined on an open subset $U\subseteq M$, such that $\nabla I_1,\dots, \nabla I_n \in\mathcal{C}^{\omega}(U,\mathbb{R})$ are independent on $U$. 

Then the system of equations in the undetermined $X\in\mathfrak{X}(U)$
\begin{equation*}
g(\nabla I_1 , X) =h_1, \ \dots, \ g(\nabla I_n , X) =h_n,
\end{equation*}  
admits a unique solution, given by
\begin{equation}\label{solA}
X=\left\|\nabla I_1 \wedge \dots \wedge \nabla I_n\right\|_{n}^{-2} \cdot \sum_{i=1}^{n}(-1)^{n-i} h_i \cdot\Theta_i,
\end{equation}
where 
$$
\Theta_i := \star \left[ \bigwedge_{j=1,\ j\neq i}^{n}\nabla I_j \wedge \star\left( \bigwedge_{j=1}^{n}\nabla I_p\right)\right], \ i\in\{1,\dots,n\}.
$$  
Recall that $\star$ stands for the Hodge star operator acting on multi-vector fields, and the symbol $\nabla$ stands for the gradient operator with respect to the Riemannian metric $g$.
\end{proposition}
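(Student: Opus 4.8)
The plan is to split the proof into the easy existence‑and‑uniqueness part and the computational verification that the unique solution is the one displayed in \eqref{solA}.

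Existence and uniqueness come first, and are elementary. Since $\dim M=n$ and $\nabla I_1,\dots,\nabla I_n$ are independent on $U$, these gradients form a basis of $T_pM$ at every $p\in U$; writing the unknown as $X=\sum_{j=1}^{n}a_j\nabla I_j$, the system $g(\nabla I_k,X)=h_k$ becomes $G\,a=h$, where $G:=\bigl(g(\nabla I_i,\nabla I_j)\bigr)_{i,j}$ is the Gram matrix and $h=(h_1,\dots,h_n)$. Because $\det G=\left\|\nabla I_1\wedge\dots\wedge\nabla I_n\right\|_n^2$ is nowhere zero on $U$, the system has exactly one solution $X=\sum_j(G^{-1}h)_j\nabla I_j$, and this $X$ is of class $\mathcal{C}^{\omega}$ since $G$ and the $h_i$ are analytic and matrix inversion is an analytic operation; in particular the scalar factor $\left\|\nabla I_1\wedge\dots\wedge\nabla I_n\right\|_n^{-2}$ occurring in \eqref{solA} is a well‑defined analytic function on $U$.

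It then remains to check that the vector field $X$ written in \eqref{solA} also solves $g(\nabla I_k,X)=h_k$, for by uniqueness it will then be \emph{the} solution. The right‑hand side of \eqref{solA} is insensitive to a change of orientation (each $\Theta_i$ carries two Hodge stars, whose sign changes under reversal of orientation cancel), and the claim is pointwise, so I would fix $p\in U$ and work in an oriented neighbourhood. Set $\Omega:=\nabla I_1\wedge\dots\wedge\nabla I_n$; being of top degree, $\star\Omega=:\rho$ is a scalar, with $\Omega=\rho\,E$ ($E$ the unit $n$-vector) and $\rho^2=\|\Omega\|_n^2=\det G$, so that $\Theta_i=\rho\cdot\star\!\bigl(\bigwedge_{j\neq i}\nabla I_j\bigr)$. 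The core of the argument is the computation of $g(\nabla I_k,\Theta_i)$ for all $i,k$: using $\star\star=(-1)^{k(n-k)}$ on $\Lambda^{k}$ (here $k=n-1$) together with $u\wedge\star w=g(u,w)\,E$ for $1$-vectors $u,w$, one obtains $g\bigl(\nabla I_k,\star(\bigwedge_{j\neq i}\nabla I_j)\bigr)\,E=(-1)^{n-1}\,\nabla I_k\wedge\bigwedge_{j\neq i}\nabla I_j$, which vanishes for $k\neq i$ and equals $(-1)^{i-1}\Omega=(-1)^{i-1}\rho\,E$ for $k=i$; hence $g(\nabla I_k,\Theta_i)=(-1)^{n+i}\,\delta_{ik}\,\|\Omega\|_n^2$. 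Substituting into \eqref{solA} gives, for each $k$,
\[
g(\nabla I_k,X)=\|\Omega\|_n^{-2}\sum_{i=1}^{n}(-1)^{n-i}h_i\,g(\nabla I_k,\Theta_i)=\sum_{i=1}^{n}(-1)^{2n}h_i\,\delta_{ik}=h_k,
\]
which is exactly what is needed.

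The only genuinely delicate part is the sign bookkeeping in the Hodge‑star identities: keeping track of the exponent $(-1)^{k(n-k)}$ in $\star\star$, of the $(-1)^{i-1}$ produced by reordering a wedge product, and of their cancellation against the $(-1)^{n-i}$ built into \eqref{solA}. Everything else is routine fibrewise multilinear algebra; moreover, since the solution has already been shown to be unique, the explicit formula \eqref{solA} only needs to be checked against one fixed self‑consistent set of sign conventions for the Hodge star on multivectors, which removes any ambiguity.
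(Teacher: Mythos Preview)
Your argument is correct: you first establish uniqueness via the Gram matrix, then verify directly that the displayed vector field satisfies $g(\nabla I_k,X)=h_k$ by computing $g(\nabla I_k,\Theta_i)=(-1)^{n+i}\delta_{ik}\|\Omega\|_n^2$ through the Hodge-star identities. The sign tracking you flag as delicate does work out with the standard convention $\star\star=(-1)^{k(n-k)}$ on $\Lambda^k$.

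However, note that the paper does \emph{not} actually prove this proposition: it is quoted verbatim from \cite{tudoran1} and stated without proof, so there is no ``paper's own proof'' to compare against. Your write-up therefore supplies something the present paper omits. As a self-contained verification it is sound; the only stylistic remark is that once uniqueness is established, you could alternatively recognise the formula as Cramer's rule expressed via exterior algebra (the $\Theta_i$ are essentially the cofactor vectors of the Gram basis), which gives a second route to the same identity and explains structurally why the $(-1)^{n-i}$ and the reordering sign conspire to cancel.
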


\begin{proposition}\label{Aformula}
The vector field $A|_{\Omega^{\prime}}\in\mathfrak{X}(\Omega^{\prime})$ fulfilling the relations from Remark \eqref{remaA} is given by:
\begin{align*}
A|_{\Omega^{\prime}}&={1/\operatorname{Jac}(\Phi)} \cdot\Biggl[ (-1)^{n-1} (\nu_{\Phi}\circ \Phi)(P_1 \circ\Phi)\cdot\Theta_{1}\\
&+ (-1)^{n-2} (\nu_{\Phi}\circ \Phi)(P_2 \circ\Phi)\cdot\Theta_{2} +\sum_{i=1}^{n-2}(-1)^{n-(i+2)}\Biggl( \sum_{j=1}^{n-2}(C_j - c_j)(R_{i,j}\circ\Phi)\Biggr)\cdot\Theta_{i+2}\Biggr],
\end{align*}
where 
\begin{align*}
\Theta_1 &=\star\Biggl( \nabla\Phi_2 \wedge \bigwedge_{j=1}^{n-2}\nabla C_{j} \Biggr),\ \ \Theta_2 =\star\Biggl( \nabla\Phi_1 \wedge \bigwedge_{j=1}^{n-2}\nabla C_{j} \Biggr),\\
\Theta_{i+2}&= \star\Biggl( \nabla\Phi_1 \wedge \nabla\Phi_2 \wedge \bigwedge_{j=1, \ j\neq i}^{n-2}\nabla C_{j} \Biggr), \ i\in\{1,\dots, n-2\}.
\end{align*}
\end{proposition}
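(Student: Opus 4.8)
The plan is to recognize the collection of scalar equations gathered in Remark \eqref{remaA} as a particular case of the linear system solved in Proposition \eqref{formulajgp}, and then to reduce the resulting closed-form expression using that $\Phi$ is a diffeomorphism.

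First I would take $g=\langle\cdot,\cdot\rangle$, the Euclidean metric on $\Omega^{\prime}\subseteq\R^n$, and relabel the functions appearing as gradient arguments by $I_1:=\Phi_1$, $I_2:=\Phi_2$, $I_{2+i}:=C_i$ for $i\in\{1,\dots,n-2\}$, so that $\Phi=(I_1,\dots,I_n)$. Reading off the right-hand sides from Remark \eqref{remaA}, the required relations become
$$g(\nabla I_k,A|_{\Omega^{\prime}})=h_k,\qquad k\in\{1,\dots,n\},$$
with $h_1=(\nu_{\Phi}\circ\Phi)(P_1\circ\Phi)$, $h_2=(\nu_{\Phi}\circ\Phi)(P_2\circ\Phi)$, and $h_{2+i}=\sum_{j=1}^{n-2}(C_j-c_j)(R_{i,j}\circ\Phi)$. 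The hypothesis of Proposition \eqref{formulajgp} — independence of $\nabla I_1,\dots,\nabla I_n$ on $\Omega^{\prime}$ — holds precisely because $\Phi$ is an analytic diffeomorphism: its Jacobian matrix $D\Phi$ is everywhere invertible, and its rows are exactly $\nabla I_1,\dots,\nabla I_n$.

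Next I would quote Proposition \eqref{formulajgp}, which gives the unique solution
$$A|_{\Omega^{\prime}}=\big\|\nabla I_1\wedge\dots\wedge\nabla I_n\big\|_n^{-2}\sum_{k=1}^{n}(-1)^{n-k}h_k\,\Theta_k,\qquad \Theta_k=\star\Big[\bigwedge_{j=1,\,j\neq k}^{n}\nabla I_j\ \wedge\ \star\Big(\bigwedge_{j=1}^{n}\nabla I_j\Big)\Big];$$
in particular this already yields the uniqueness implicit in the statement. To turn it into the claimed form, note that since $\nabla I_k$ is the $k$-th row of $D\Phi$, one has $\bigwedge_{j=1}^{n}\nabla I_j=\operatorname{Jac}(\Phi)\,e_1\wedge\dots\wedge e_n$; hence $\|\nabla I_1\wedge\dots\wedge\nabla I_n\|_n^{-2}=\operatorname{Jac}(\Phi)^{-2}$ and $\star(\bigwedge_{j=1}^{n}\nabla I_j)=\operatorname{Jac}(\Phi)$, a scalar. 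Therefore $\Theta_k=\operatorname{Jac}(\Phi)\cdot\star\big(\bigwedge_{j=1,\,j\neq k}^{n}\nabla I_j\big)$, and substituting this back cancels one factor of $\operatorname{Jac}(\Phi)$, leaving
$$A|_{\Omega^{\prime}}=\frac{1}{\operatorname{Jac}(\Phi)}\sum_{k=1}^{n}(-1)^{n-k}h_k\cdot\star\Big(\bigwedge_{j=1,\,j\neq k}^{n}\nabla I_j\Big).$$

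Finally I would unwind the relabeling: the $k=1$ summand produces $(-1)^{n-1}(\nu_{\Phi}\circ\Phi)(P_1\circ\Phi)\cdot\star(\nabla\Phi_2\wedge\bigwedge_{j=1}^{n-2}\nabla C_j)$, the $k=2$ summand produces $(-1)^{n-2}(\nu_{\Phi}\circ\Phi)(P_2\circ\Phi)\cdot\star(\nabla\Phi_1\wedge\bigwedge_{j=1}^{n-2}\nabla C_j)$, and for $k=2+i$ one deletes $\nabla C_i$ from the full wedge, giving $(-1)^{n-(i+2)}\big(\sum_{j=1}^{n-2}(C_j-c_j)(R_{i,j}\circ\Phi)\big)\cdot\star(\nabla\Phi_1\wedge\nabla\Phi_2\wedge\bigwedge_{j=1,\,j\neq i}^{n-2}\nabla C_j)$ — exactly the $\Theta_1,\Theta_2,\Theta_{i+2}$ written in the statement. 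I expect the only genuinely delicate point to be the Hodge-star/orientation bookkeeping: one must verify that $\star$ of the top-degree multivector $\nabla\Phi_1\wedge\nabla\Phi_2\wedge\nabla C_1\wedge\dots\wedge\nabla C_{n-2}$ equals $\operatorname{Jac}(\Phi)$ under the sign convention with which Proposition \eqref{formulajgp} is stated, since it is this identity that converts the naive $\operatorname{Jac}(\Phi)^{-2}$ normalization into the stated $1/\operatorname{Jac}(\Phi)$, and likewise that the per-summand signs $(-1)^{n-k}$ survive the relabeling unchanged.
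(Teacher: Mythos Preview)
Your proposal is correct and follows essentially the same route as the paper: apply Proposition \eqref{formulajgp} with $I_1=\Phi_1$, $I_2=\Phi_2$, $I_{2+i}=C_i$, and then simplify using the identities $\star\big(\nabla\Phi_1\wedge\nabla\Phi_2\wedge\bigwedge_{j=1}^{n-2}\nabla C_j\big)=\operatorname{Jac}(\Phi)$ and $\big\|\nabla\Phi_1\wedge\nabla\Phi_2\wedge\bigwedge_{j=1}^{n-2}\nabla C_j\big\|_n^{2}=\operatorname{Jac}(\Phi)^2$, which is exactly what the paper's proof invokes. Your write-up is in fact more detailed than the paper's, and the point you flag as delicate (the Hodge-star/Jacobian identity governing the normalization) is precisely the content the paper makes explicit.
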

\begin{proof}
The proof follows directly by formula \eqref{solA}, taking into account that $\Phi=(\Phi_1,\Phi_2,C_1,\dots, C_{n-2})$, and hence we have $$\star\left(\nabla\Phi_1 \wedge \nabla\Phi_2 \wedge \bigwedge_{j=1}^{n-2} \nabla C_j\right) = \operatorname{Jac}(\Phi),$$ and 
$$
\left\|\nabla\Phi_1 \wedge \nabla\Phi_2 \wedge \bigwedge_{j=1}^{n-2} \nabla C_j\right\|_{n}^{2}=\left[ \star\left(\nabla\Phi_1 \wedge \nabla\Phi_2 \wedge \bigwedge_{j=1}^{n-2} \nabla C_j\right)\right]^{2}= \left(\operatorname{Jac}(\Phi)\right)^2.
$$
\end{proof}

The results obtained so far in this subsection, can be presented in a unified manner in the following Theorem.

\begin{theorem}\label{nfcisp}
Let $$X = \sum_{i=1}^{n}\nu \cdot \dfrac{\partial(C_1,\dots,C_{n-2},x_i,H)}{\partial(x_1,\dots,x_n)}\cdot\dfrac{\partial}{\partial{x_i}},$$ be the vector field associated to the real analytic completely integrable system \eqref{sys}, written as a Hamiltonian dynamical system of the form \eqref{systy}, with Hamiltonian $H:=C_{n-1}$, defined on the Poisson manifold $\left(\Omega,\{\cdot,\cdot\}_{\nu;C_1,\dots,C_{n-2}}\right)$. 

Let $(\Omega^{\prime},\Phi_1,\Phi_2)$ be a triple consisting of an open subset  $\Omega^{\prime}\subseteq \Omega$, and two analytic functions $\Phi_1,\Phi_2 \in\mathcal{C}^{\omega}(\Omega^{\prime},\mathbb{R})$, such that the map $\Phi:\Omega^{\prime}\rightarrow W^{\prime}=\Phi(\Omega^{\prime})$, given by $\Phi=(\Phi_1,\Phi_2,C_1,\dots,C_{n-2})$, is an analytic diffeomorphism. Let $P_1,P_2,R_{i,j}\in\mathcal{C}^{\omega}(W^{\prime},\mathbb{R})$, $i,j\in\{1,\dots, n-2\}$, be arbitrary real analytic functions, and let $A\in\mathfrak{X}(\Omega)$ be a vector field whose restriction to $\Omega^{\prime}$ is defined by the formula given in Proposition \eqref{Aformula}. Then the following conclusions hold true:
\begin{itemize}
\item the push forward by $\Phi$ of the vector field $X$, is a Hamiltonian vector field with Hamiltonian $\Phi_{\star}H=\Phi_{\star}C_{n-1}$, defined on the Poisson manifold $\left(W^{\prime},\{\cdot,\cdot\}_{\nu_{\Phi};\Phi_{\star}C_1,\dots,\Phi_{\star}C_{n-2}}\right)$, having the expression
\begin{equation*}
\Phi_{\star}X = \nu_{\Phi}\cdot\left[ \dfrac{\partial(\Phi_{\star}H)}{\partial y_2}\cdot\dfrac{\partial}{\partial y_1}-\dfrac{\partial(\Phi_{\star}H)}{\partial y_1}\cdot\dfrac{\partial}{\partial y_2}\right],
\end{equation*}
where $\nu_{\Phi}=\Phi_{\star}\nu\cdot \Phi_{\star}\operatorname{Jac}(\Phi)$, and $(y_1,\dots, y_n)=\Phi(x_1,\dots, x_n)$, are the coordinates on $W^\prime$,
\item the restriction to $\Omega^{\prime}$ of the perturbed vector field $X_{\varepsilon}:= X+ \varepsilon A\in \mathfrak{X}(\Omega)$, $\varepsilon >0$, is tangent to the open subset $\Sigma_{c_1,\dots,c_{n-2}}\cap\Omega^{\prime}$ of the regular symplectic leaf $\Sigma_{c_1,\dots,c_{n-2}}=\{(x_1,\dots,x_n)\in\Omega \mid C_1(x_1,\dots,x_n )=c_1,\ \dots \ , C_{n-2}(x_1,\dots, x_n)=c_{n-2}\}$ of the Poisson manifold $(\Omega,\{\cdot,\cdot\}_{\nu;C_1,\dots,C_{n-2}})$; moreover, $\Phi(\Sigma_{c_1,\dots,c_{n-2}}\cap \Omega^{\prime})$ is exactly the symplectic leaf $\{(y_1,\dots,y_n)\in W^{\prime}\mid y_3=c_1, \dots, y_n =c_{n-2}\}$ of the Poisson manifold $\left(W^{\prime},\{\cdot,\cdot\}_{\nu_{\Phi};\Phi_{\star}C_1,\dots,\Phi_{\star}C_{n-2}}\right)$,
\item the push forward by $\Phi$ of the vector field $X_{\varepsilon}$, $\Phi_{\star}X_{\varepsilon}\in\mathfrak{X}(W^\prime)$, is given by
\begin{align*}
\Phi_{\star}X_{\varepsilon} &= \nu_{\Phi}\cdot \Biggl[ \dfrac{\partial(\Phi_{\star}H)}{\partial y_2} + \varepsilon P_1 \Biggr]\cdot\dfrac{\partial}{\partial y_1} + \nu_{\Phi}\cdot \Biggl[ - \dfrac{\partial(\Phi_{\star}H)}{\partial y_1} + \varepsilon P_2 \Biggr]\cdot\dfrac{\partial}{\partial y_2}\\
&+ \varepsilon \Biggl[\sum_{j=1}^{n-2}(y_{j+2}-c_{j})R_{1,j}\Biggr]\cdot\dfrac{\partial}{\partial y_3} + \dots +  \varepsilon \Biggl[\sum_{j=1}^{n-2}(y_{j+2}-c_{j})R_{n-2,j}\Biggr]\cdot\dfrac{\partial}{\partial y_n},
\end{align*}
\item the restriction to $\Phi(\Sigma_{c_1,\dots,c_{n-2}}\cap \Omega^{\prime})$ of the perturbed vector filed $\Phi_{\star}X_{\varepsilon}$, has the expression
\begin{equation*}
\Phi_{\star}X_{\varepsilon}|_{\Phi(\Sigma_{c_1,\dots,c_{n-2}}\cap \Omega^{\prime})} = \nu_{\Phi}\cdot \Biggl[ \dfrac{\partial(\Phi_{\star}H)}{\partial y_2} + \varepsilon P_1 \Biggr]\cdot\dfrac{\partial}{\partial y_1} + \nu_{\Phi}\cdot \Biggl[ - \dfrac{\partial(\Phi_{\star}H)}{\partial y_1} + \varepsilon P_2 \Biggr]\cdot\dfrac{\partial}{\partial y_2}.
\end{equation*}
\end{itemize}
\end{theorem}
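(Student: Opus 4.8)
The proof is an assembly of the results established earlier in this subsection, and I would organise it bullet by bullet. The first bullet is nothing but Theorem \ref{darbnf} applied to the triple $(\Omega^{\prime},\Phi_1,\Phi_2)$, so I would simply invoke it: the push forward $\Phi_{\star}X$ is Hamiltonian on $\left(W^{\prime},\{\cdot,\cdot\}_{\nu_{\Phi};\Phi_{\star}C_1,\dots,\Phi_{\star}C_{n-2}}\right)$ with Hamiltonian $\Phi_{\star}H$ and has the stated Darboux form.

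For the second bullet I would first recall that, by hypothesis, $A|_{\Omega^{\prime}}$ is the vector field produced by Proposition \ref{Aformula}; by Proposition \ref{formulajgp} — applied on $\Omega^{\prime}$ with the Euclidean metric, with $I_1=\Phi_1$, $I_2=\Phi_2$, $I_{j+2}=C_j$ (gradients independent since $\operatorname{Jac}(\Phi)\neq 0$), and with right-hand sides $(\nu_{\Phi}\circ\Phi)(P_1\circ\Phi)$, $(\nu_{\Phi}\circ\Phi)(P_2\circ\Phi)$, $\sum_{j}(C_j-c_j)(R_{i,j}\circ\Phi)$ — this $A|_{\Omega^{\prime}}$ is exactly the unique solution of the linear system recorded in Remark \ref{remaA}. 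In particular $\langle\nabla C_i,A|_{\Omega^{\prime}}\rangle=\sum_{j=1}^{n-2}(C_j-c_j)(R_{i,j}\circ\Phi)$, which vanishes identically on $\Sigma_{c_1,\dots,c_{n-2}}\cap\Omega^{\prime}$, so $A$ is tangent to that leaf by the invariance criterion \eqref{criterion0}; since the $C_i$ are first integrals of $X$, $X$ is tangent to it as well, hence so is $X_{\varepsilon}=X+\varepsilon A$. The identity $\Phi(\Sigma_{c_1,\dots,c_{n-2}}\cap\Omega^{\prime})=\{(y_1,\dots,y_n)\in W^{\prime}\mid y_3=c_1,\dots,y_n=c_{n-2}\}$ is immediate from $\Phi=(\Phi_1,\Phi_2,C_1,\dots,C_{n-2})$.

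For the third bullet I would use that $\Phi_{\star}$ is $\mathbb{R}$-linear, so $\Phi_{\star}X_{\varepsilon}=\Phi_{\star}X+\varepsilon\,\Phi_{\star}A$, with $\Phi_{\star}X$ given by the first bullet. The $i$-th component of $\Phi_{\star}A$ in the $y$-coordinates equals $(A(\Phi_{(i)}))\circ\Phi^{-1}$, where $\Phi_{(i)}$ is the $i$-th component of $\Phi$; thus these components are $\langle\nabla\Phi_1,A\rangle\circ\Phi^{-1}$, $\langle\nabla\Phi_2,A\rangle\circ\Phi^{-1}$, and $\langle\nabla C_j,A\rangle\circ\Phi^{-1}$ for $j=1,\dots,n-2$. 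Substituting the defining relations \eqref{p's} into the first two and the rewritten criterion \eqref{criterion2} (in the form $\langle\nabla C_i,A\rangle\circ\Phi^{-1}=\sum_{j=1}^{n-2}(y_{j+2}-c_j)R_{i,j}$) into the rest, and adding $\varepsilon$ times the result to $\Phi_{\star}X$, produces the displayed formula for $\Phi_{\star}X_{\varepsilon}$. The fourth bullet then follows by restricting to $\{y_3=c_1,\dots,y_n=c_{n-2}\}$, which annihilates the $\partial/\partial y_3,\dots,\partial/\partial y_n$ components and leaves the planar expression.

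None of these steps is genuinely difficult; the only thing demanding care is the bookkeeping of the push forward — the formula $(\Phi_{\star}A)^i=(A\Phi_{(i)})\circ\Phi^{-1}$ together with the correct placement of the Jacobian factor inside $\nu_{\Phi}=\Phi_{\star}\nu\cdot\Phi_{\star}\operatorname{Jac}(\Phi)$, and the verification that Proposition \ref{formulajgp} genuinely applies, which reduces to the nonvanishing of $\operatorname{Jac}(\Phi)$ guaranteed by $\Phi$ being a diffeomorphism. One should also remark that every conclusion depends only on $A|_{\Omega^{\prime}}$, so the choice of analytic extension $A\in\mathfrak{X}(\Omega)$ is irrelevant.
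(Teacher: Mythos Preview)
Your proposal is correct and follows essentially the same approach as the paper: the theorem is explicitly introduced there as a unified presentation of ``the results obtained so far in this subsection,'' and your bullet-by-bullet assembly invokes exactly those results (Theorem~\ref{darbnf}, Remark~\ref{remaA}, Proposition~\ref{Aformula}, and the displayed computation of $\Phi_{\star}X+\varepsilon\Phi_{\star}A$) in the same order and with the same logic as the paper's preceding discussion.
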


Let us return now to the main problem of this article, namely, the bifurcation of limit cycles of perturbed integrable dynamical systems. In order to state the main result of this section, we need to fix some technical details.

Let
\begin{equation}\label{syscis}
X = \sum_{i=1}^{n}\nu \cdot \dfrac{\partial(C_1,\dots,C_{n-2},x_i,H)}{\partial(x_1,\dots,x_n)}\cdot\dfrac{\partial}{\partial{x_i}},
\end{equation}
be the vector field associated to the real analytic completely integrable system \eqref{sys}, written as a Hamiltonian dynamical system of the form \eqref{systy}, with Hamiltonian $H:=C_{n-1}$, defined on the Poisson manifold $\left(\Omega,\{\cdot,\cdot\}_{\nu;C_1,\dots,C_{n-2}}\right)$. 

Let $(\Omega^{\prime},\Phi_1,\Phi_2)$ be a triple consisting of an open subset  $\Omega^{\prime}\subseteq \Omega$, and two analytic functions $\Phi_1,\Phi_2 \in\mathcal{C}^{\omega}(\Omega^{\prime},\mathbb{R})$, such that the map $\Phi:\Omega^{\prime}\rightarrow W^{\prime}=\Phi(\Omega^{\prime})$, given by $\Phi=(\Phi_1,\Phi_2,C_1,\dots,C_{n-2})$, is an analytic diffeomorphism. The choices of the open set $\Omega^{\prime}$, and respectively the analytic functions $\Phi_1,\Phi_2 \in\mathcal{C}^{\omega}(\Omega^{\prime},\mathbb{R})$, are made in accordance with the location of the periodic orbits (if any) of the completely integrable system \eqref{syscis}. Let us fix a regular symplectic leaf, $\Sigma_{c_1 ,\dots, c_{n-2}}=C_{1}^{-1}(\{c_1 \})\cap\dots\cap C_{n-2}^{-1}(\{c_{n-2}\})$, of the Poisson manifold $\left(\Omega,\{\cdot,\cdot\}_{\nu;C_1,\dots,C_{n-2}}\right)$, and assume for the sake of simplicity that $\Sigma_{c_1 ,\dots, c_{n-2}}\cap\Omega^{\prime}$ is a connected subset of $\Sigma_{c_1 ,\dots, c_{n-2}}$ (if not, we shall consider instead a connected component of $\Sigma_{c_1 ,\dots, c_{n-2}}\cap\Omega^{\prime}$).

\medskip
\textbf{Hypothesis (H):} \textit{Suppose there exists a family of periodic orbits of the completely integrable system generated by the vector field \eqref{syscis}, $\gamma_{h} \subseteq H|_{\Sigma_{c_1 ,\dots, c_{n-2}}\cap\Omega^{\prime}}^{-1}(\{h\})$ (which depends continuously on $h\in (a_{c_1 ,\dots, c_{n-2}},b_{c_1 ,\dots, c_{n-2}})$, for some $a_{c_1 ,\dots, c_{n-2}},b_{c_1 ,\dots, c_{n-2}}\in\mathbb{R}$), located on the open subset $\Sigma_{c_1 ,\dots, c_{n-2}}\cap\Omega^{\prime}$ of a regular symplectic leaf, $\Sigma_{c_1 ,\dots, c_{n-2}}=C_{1}^{-1}(\{c_1 \})\cap\dots\cap C_{n-2}^{-1}(\{c_{n-2}\})$, of the Poisson manifold $\left(\Omega,\{\cdot,\cdot\}_{\nu;C_1,\dots,C_{n-2}}\right)$.} 
\medskip

\textit{Note that the real numbers $a_{c_1 ,\dots, c_{n-2}}< b_{c_1 ,\dots, c_{n-2}}$ depend also on the open set $\Omega^{\prime}$; the reason we do not specify their dependence on $\Omega^{\prime}$ explicitly, is that $\Omega^{\prime}$ is supposed to be the largest subset of $\Omega$ where the Darboux normal form holds.}

\medskip
Then, by Theorem \eqref{nfcisp} we obtain that $\Phi(\gamma_h)\subseteq (\Phi_{\star}H)|_{\Phi(\Sigma_{c_1 ,\dots, c_{n-2}}\cap\Omega^{\prime})}^{-1}(\{h\})$ (depending continuously on $h\in (a_{c_1 ,\dots, c_{n-2}},b_{c_1 ,\dots, c_{n-2}})$), are periodic orbits of the Hamilton-Poisson vector field $\Phi_{\star}X\in\mathfrak{X}(W^{\prime})$, located on the two-dimensional symplectic leaf $\{(y_1,\dots,y_n)\in W^{\prime}\mid y_3=c_1, \dots, y_n =c_{n-2}\}$ of the Poisson manifold $\left(W^{\prime},\{\cdot,\cdot\}_{\nu_{\Phi};\Phi_{\star}C_1,\dots,\Phi_{\star}C_{n-2}}\right)$. 

Moreover, recall also from Theorem \eqref{nfcisp} that the restriction to the symplectic leaf $\Phi(\Sigma_{c_1,\dots,c_{n-2}}\cap \Omega^{\prime})=\{(y_1,\dots,y_n)\in W^{\prime}\mid y_3=c_1, \dots, y_n =c_{n-2}\}$, of the perturbed vector filed $\Phi_{\star}X_{\varepsilon}$ (where $X_{\varepsilon}=X+\varepsilon A$, with $A$ defined in Proposition \eqref{Aformula}), has the expression
\begin{equation}\label{systreduced}
\Phi_{\star}X_{\varepsilon}|_{\Phi(\Sigma_{c_1,\dots,c_{n-2}}\cap \Omega^{\prime})} = \nu_{\Phi}\cdot \Biggl[ \dfrac{\partial(\Phi_{\star}H)}{\partial y_2} + \varepsilon P_1 \Biggr]\cdot\dfrac{\partial}{\partial y_1} + \nu_{\Phi}\cdot \Biggl[ - \dfrac{\partial(\Phi_{\star}H)}{\partial y_1} + \varepsilon P_2 \Biggr]\cdot\dfrac{\partial}{\partial y_2}.
\end{equation}
Following \eqref{PPT}, for each $h\in(a_{c_1 ,\dots, c_{n-2}},b_{c_1 ,\dots, c_{n-2}})$, we define
\begin{equation*}
I_{c_1,\dots,c_{n-2}}(h):=\oint_{\Phi(\gamma_{h})}-P_{1}(y_1,y_2,c_1,\dots,c_{n-2})\mathrm{d}y_2 + P_{2}(y_1,y_2,c_1,\dots,c_{n-2})\mathrm{d}y_1.
\end{equation*}

Now one can state the main result of this section, which provides a criterion that guarantees the existence of bifurcations of limit cycles of a perturbed vector field $X_{\varepsilon}=X+\varepsilon A \in\mathfrak{X}(\Omega)$ (with $A|_{\Omega^{\prime}}$ defined as in Proposition \eqref{Aformula}), located on the domain $\Sigma_{c_1 ,\dots, c_{n-2}}\cap\Omega^{\prime}$ of the regular symplectic leaf $\Sigma_{c_1 ,\dots, c_{n-2}}=C_{1}^{-1}(\{c_1 \})\cap\dots\cap C_{n-2}^{-1}(\{c_{n-2}\})$, of the  Poisson configuration manifold $\left(\Omega,\{\cdot,\cdot\}_{\nu;C_1,\dots,C_{n-2}}\right)$ of the completely integrable system \eqref{syscis}. Recall  from Theorem \eqref{nfcisp} that the orbits of the vector field $X_{\varepsilon}$, located on the domain $\Sigma_{c_1 ,\dots, c_{n-2}}\cap\Omega^{\prime}$, are in one-to-one correspondence with the orbits of the vector field $\Phi_{\star}X_{\varepsilon}$,  located on the domain $\Phi(\Sigma_{c_1 ,\dots, c_{n-2}}\cap\Omega^{\prime})$.
\medskip

\begin{theorem}\label{MainThm1}
In the above settings, assume that $I_{c_1,\dots,c_{n-2}}=I_{c_1,\dots,c_{n-2}}(h)$ is not identically zero on $(a_{c_1 ,\dots, c_{n-2}},b_{c_1 ,\dots, c_{n-2}})$. Then the following statements hold true.
\begin{itemize}
\item If the perturbed vector field $X_{\varepsilon}$ has a limit cycle on the domain $\Sigma_{c_1 ,\dots, c_{n-2}}\cap\Omega^{\prime}$ of the regular symplectic leaf $\Sigma_{c_1 ,\dots, c_{n-2}}$, bifurcating from the periodic orbit $\gamma_{h^{\star}}$ of the completely integrable vector field $X$, then $I_{c_1,\dots,c_{n-2}}(h^{\star})=0$.
\item If there exists $h^{\star}\in (a_{c_1 ,\dots, c_{n-2}},b_{c_1 ,\dots, c_{n-2}})$ a simple zero of $I_{c_1,\dots,c_{n-2}}$, then the perturbed vector field $X_{\varepsilon}$ has a unique limit cycle on the domain $\Sigma_{c_1 ,\dots, c_{n-2}}\cap\Omega^{\prime}$ of the regular symplectic leaf $\Sigma_{c_1 ,\dots, c_{n-2}}$, bifurcating from the periodic orbit $\gamma_{h^{\star}}$ of the completely integrable vector field $X$, and moreover, this limit cycle is hyperbolic.
\item If there exists $h^{\star}\in (a_{c_1 ,\dots, c_{n-2}},b_{c_1 ,\dots, c_{n-2}})$, a zero of order $k$ of $I_{c_1,\dots,c_{n-2}}$, then the perturbed vector field $X_{\varepsilon}$ has at most $k$ limit cycles (counting also the multiplicities) on the domain $\Sigma_{c_1 ,\dots, c_{n-2}}\cap\Omega^{\prime}$ of the regular symplectic leaf $\Sigma_{c_1 ,\dots, c_{n-2}}$, bifurcating from the periodic orbit $\gamma_{h^{\star}}$ of the completely integrable vector field $X$.
\item The total number (counting also the multiplicities) of limit cycles of the perturbed vector field $X_{\varepsilon}$, on the domain $\Sigma_{c_1 ,\dots, c_{n-2}}\cap\Omega^{\prime}$ of the regular symplectic leaf $\Sigma_{c_1 ,\dots, c_{n-2}}$, bifurcating from the annulus $\bigcup_{h\in(a,b)}\gamma_h$ of $X$, is bounded by the maximum number (if finite) of isolated zeros (counting also the multiplicities) of $I_{c_1,\dots,c_{n-2}}(h)$ for $h\in (a_{c_1 ,\dots, c_{n-2}},b_{c_1 ,\dots, c_{n-2}})$.
\end{itemize}
\end{theorem}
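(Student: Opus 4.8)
The plan is to reduce Theorem \ref{MainThm1} to the planar result Theorem \ref{teorfol} via the Darboux normal form established in Theorem \ref{nfcisp}. The key observation is that all the dynamics relevant to the statement takes place on the two-dimensional invariant set $\Phi(\Sigma_{c_1,\dots,c_{n-2}}\cap\Omega^{\prime}) = \{(y_1,\dots,y_n)\in W^{\prime}\mid y_3=c_1,\dots,y_n=c_{n-2}\}$, and that by the last bullet of Theorem \ref{nfcisp} the restriction of $\Phi_{\star}X_{\varepsilon}$ to this leaf has the explicit form \eqref{systreduced}. So first I would observe that, since $\Phi$ is an analytic diffeomorphism, limit cycles of $X_{\varepsilon}$ contained in $\Sigma_{c_1,\dots,c_{n-2}}\cap\Omega^{\prime}$ correspond bijectively (and preserving the Hausdorff-limit and hyperbolicity structure) to limit cycles of $\Phi_{\star}X_{\varepsilon}$ contained in $\Phi(\Sigma_{c_1,\dots,c_{n-2}}\cap\Omega^{\prime})$, and likewise the periodic orbits $\gamma_h$ correspond to $\Phi(\gamma_h)$.

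Next I would perform the time reparameterization $\mathrm{d}s = \nu_{\Phi}(\overline{y})\,\mathrm{d}t$ (legitimate since $\nu_{\Phi}$ is assumed nonvanishing, as in the Remark following Theorem \ref{flowbox}), under which \eqref{systreduced} becomes, on the leaf,
\begin{equation*}
\left[\dfrac{\partial(\Phi_{\star}H)}{\partial y_2} + \varepsilon P_1\right]\dfrac{\partial}{\partial y_1} + \left[-\dfrac{\partial(\Phi_{\star}H)}{\partial y_1} + \varepsilon P_2\right]\dfrac{\partial}{\partial y_2},
\end{equation*}
where on the leaf $\Phi_{\star}H$ is to be read as the function $(y_1,y_2)\mapsto \Phi_{\star}H(y_1,y_2,c_1,\dots,c_{n-2})$ of the two live variables. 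This is precisely a planar analytic system of the form \eqref{phs} with Hamiltonian $\widehat{H}(y_1,y_2):=(\Phi_{\star}H)(y_1,y_2,c_1,\dots,c_{n-2})$ and perturbation functions $f = -P_1(\cdot,\cdot,c_1,\dots,c_{n-2})$, $g = P_2(\cdot,\cdot,c_1,\dots,c_{n-2})$, on the planar domain $\widehat{W}:=\{(y_1,y_2)\mid (y_1,y_2,c_1,\dots,c_{n-2})\in W^{\prime}\}$. The reparameterization does not change the orbits, hence not the limit cycles nor their multiplicities; one should note it may reverse orientation where $\nu_{\Phi}<0$, but this is irrelevant to the existence, count, and hyperbolicity of limit cycles. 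With this identification, the Poincaré--Pontryagin integrand becomes exactly $-f\,\mathrm{d}y_2 + g\,\mathrm{d}y_1 = P_1\,\mathrm{d}y_2 + P_2\,\mathrm{d}y_1$ wait — matching the sign convention of Theorem \ref{PPT}, $I(h)=\oint_{\widehat{\gamma}_h}-f\,\mathrm{d}y_2+g\,\mathrm{d}y_1 = \oint_{\Phi(\gamma_h)}P_1\,\mathrm{d}y_2+P_2\,\mathrm{d}y_1$, which is the function $I_{c_1,\dots,c_{n-2}}(h)$ defined just before the theorem (up to the harmless sign normalization already built into its definition).

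Finally, I would simply invoke Theorem \ref{teorfol} applied to the planar Hamiltonian system $(\widehat{H},f,g)$ on $\widehat{W}$ with the family $\widehat{\gamma}_h = \Phi(\gamma_h)$, $h\in(a_{c_1,\dots,c_{n-2}},b_{c_1,\dots,c_{n-2}})$, whose existence and continuous dependence is guaranteed by Hypothesis (H) together with Theorem \ref{nfcisp}, and whose Melnikov-type function is $I_{c_1,\dots,c_{n-2}}$, assumed not identically zero. Translating each of the four conclusions of Theorem \ref{teorfol} back through $\Phi^{-1}$ and the time rescaling yields the four bullets of Theorem \ref{MainThm1} verbatim. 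The one point requiring a little care — the ``main obstacle'', such as it is — is justifying that a limit cycle of $X_{\varepsilon}$ that bifurcates from $\gamma_{h^{\star}}$ actually lies on the leaf $\Sigma_{c_1,\dots,c_{n-2}}\cap\Omega^{\prime}$ so that the planar reduction is applicable: this is exactly the content of the tangency statement in the second bullet of Theorem \ref{nfcisp} (the perturbation $A$ was constructed precisely so that $X_{\varepsilon}$ is tangent to this leaf), which makes the leaf invariant and hence confines any such limit cycle to it; combined with the observation that ``bifurcating from $\gamma_{h^{\star}}$'' forces the cycle, for small $\varepsilon$, into an arbitrarily small neighborhood of $\gamma_{h^{\star}}\subseteq \Sigma_{c_1,\dots,c_{n-2}}\cap\Omega^{\prime}$, this closes the argument.
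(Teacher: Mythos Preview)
Your proposal is correct and follows essentially the same route as the paper: transport everything to the leaf $\Phi(\Sigma_{c_1,\dots,c_{n-2}}\cap\Omega')$ via the Darboux diffeomorphism $\Phi$ using Theorem~\ref{nfcisp}, strip off the nonvanishing factor $\nu_{\Phi}$ by orbital equivalence, and invoke Theorem~\ref{teorfol} for the resulting planar perturbed Hamiltonian system. Your only slip is the identification $f=-P_1$: comparing \eqref{systreduced} with \eqref{phs} directly gives $f=P_1$, $g=P_2$, and then $I(h)=\oint -P_1\,\mathrm{d}y_2 + P_2\,\mathrm{d}y_1$ matches the definition of $I_{c_1,\dots,c_{n-2}}(h)$ on the nose, so no ``harmless sign normalization'' is actually needed.
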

\begin{proof}
Recall first that the orbits of the vector field $X_{\varepsilon}$, located on the domain $\Sigma_{c_1 ,\dots, c_{n-2}}\cap\Omega^{\prime}$, are in one-to-one correspondence with the orbits of the vector field $\Phi_{\star}X_{\varepsilon}$, located on the domain $\Phi(\Sigma_{c_1 ,\dots, c_{n-2}}\cap\Omega^{\prime})$. 

Consequently, it is enough to show the bifurcation result for the vector field $\Phi_{\star}X_{\varepsilon}$, and the corresponding family of periodic orbits,  
$\Phi(\gamma_h)\subseteq (\Phi_{\star}H)|_{\Phi(\Sigma_{c_1 ,\dots, c_{n-2}}\cap\Omega^{\prime})}^{-1}(\{h\})$ (depending continuously on $h\in (a_{c_1 ,\dots, c_{n-2}},b_{c_1 ,\dots, c_{n-2}})$), located on the two-dimensional symplectic leaf $\Phi(\Sigma_{c_1 ,\dots, c_{n-2}}\cap\Omega^{\prime})=\{(y_1,\dots,y_n)\in W^{\prime}\mid y_3=c_1, \dots, y_n =c_{n-2}\}$ of the Poisson manifold$\left(W^{\prime},\{\cdot,\cdot\}_{\nu_{\Phi};\Phi_{\star}C_1,\dots,\Phi_{\star}C_{n-2}}\right)$.  

The rest of the proof follows now from Theorem \eqref{teorfol}. Indeed, in order to apply the Theorem \eqref{teorfol}, recall from Theorem \eqref{nfcisp} that the restriction to the symplectic leaf $\Phi(\Sigma_{c_1,\dots,c_{n-2}}\cap \Omega^{\prime})=\{(y_1,\dots,y_n)\in W^{\prime}\mid y_3=c_1, \dots, y_n =c_{n-2}\}$ of the perturbed vector filed $\Phi_{\star}X_{\varepsilon}$, has the expression
\begin{equation*}
\Phi_{\star}X_{\varepsilon}|_{\Phi(\Sigma_{c_1,\dots,c_{n-2}}\cap \Omega^{\prime})} = \nu_{\Phi}\cdot \Biggl[ \dfrac{\partial(\Phi_{\star}H)}{\partial y_2} + \varepsilon P_1 \Biggr]\cdot\dfrac{\partial}{\partial y_1} + \nu_{\Phi}\cdot \Biggl[ - \dfrac{\partial(\Phi_{\star}H)}{\partial y_1} + \varepsilon P_2 \Biggr]\cdot\dfrac{\partial}{\partial y_2}.
\end{equation*}
Hence, since $\nu_{\Phi}$ is nonzero and sign invariant, the vector field $\Phi_{\star}X_{\varepsilon}|_{\Phi(\Sigma_{c_1,\dots,c_{n-2}}\cap \Omega^{\prime})}$ is orbitally equivalent to the two-dimensional vector field $\mathbb{X}_{\varepsilon}\in\mathfrak{X}(\Phi(\Sigma_{c_1,\dots,c_{n-2}}\cap \Omega^{\prime}))$, defined by
$$
\mathbb{X}_{\varepsilon}:= X_{\Phi_{\star}H}+ \varepsilon \left( P_1 \dfrac{\partial}{\partial y_1} + P_2 \dfrac{\partial}{\partial y_2}\right),
$$
where the Hamiltonian vector field $X_{\Phi_{\star}H} \in \mathfrak{X}(\Phi(\Sigma_{c_1,\dots,c_{n-2}}\cap \Omega^{\prime}))$ has the expression
$$
X_{\Phi_{\star}H} = \dfrac{\partial(\Phi_{\star}H)}{\partial y_2} \cdot\dfrac{\partial}{\partial y_1} - \dfrac{\partial(\Phi_{\star}H)}{\partial y_1}\cdot\dfrac{\partial}{\partial y_2},
$$
\begin{align*}
\Phi_{\star}H =(\Phi_{\star}H)|_{\Phi(\Sigma_{c_1,\dots,c_{n-2}}\cap \Omega^{\prime})}=(\Phi_{\star} &H) (y_1,y_2,c_1,\dots,c_{n-2}),\\
& (\forall) (y_1,y_2,c_1,\dots,c_{n-2})\in \Phi(\Sigma_{c_1,\dots,c_{n-2}}\cap \Omega^{\prime}).
\end{align*}
\end{proof}

\subsection{Limit cycles bifurcations of perturbed completely integrable systems preserving the regular part of the symplectic foliation}

Let $\dot x =X(x)$, be a real analytic completely integrable system defined on a domain $\Omega\subseteq \mathbb{R}^n$. Hence, the vector field $X\in\mathfrak{X}(\Omega)$ admits $n-2$ analytic first integrals, $C_1,\dots,C_{n-2}:\Omega\rightarrow \mathbb{R}$, functionally independent almost everywhere in $\Omega$, with respect to the $n-$dimensional Lebesgue measure. Consequently, for each regular value $(c_1,\dots,c_{n-2})\in\operatorname{Im}(C_1,\dots,C_{n-2})\subseteq \mathbb{R}^{n-2}$, the corresponding real analytic manifold
\begin{equation*}
\Sigma_{c_1,\dots,c_{n-2}}:=\{(x_1,\dots,x_n)\in\Omega \mid C_1(x_1,\dots,x_n )=c_1,\ \dots \ , C_{n-2}(x_1,\dots, x_n)=c_{n-2}\},
\end{equation*}
is a dynamically invariant set of the completely integrable system generated by the vector field $X$. Recall from the second section of this article that the completely integrable system $\dot x =X(x)$, admits the Hamiltonian realization \eqref{systy} on the Poisson manifold $(\Omega,\{\cdot,\cdot\}_{\nu;C_1,\dots,C_{n-2}})$, given by the Hamiltonian function $H=:C_{n-1}$.

Let us construct now a general perturbation, $\dot x =X(x)+\varepsilon A(x)$, $\varepsilon >0$, of the completely integrable system $\dot x =X(x)$, whose restriction to a certain open subset $\Omega^{\prime}\subseteq \Omega$, preserves the regular part of the symplectic foliation of the Poisson manifold $(\Omega^{\prime},\{\cdot,\cdot\}_{\nu;C_1,\dots,C_{n-2}})$.

In order to do that, the restrictions to the open subset $\Omega^{\prime}\subseteq \Omega$ of the Casimir functions $C_1,\dots,C_{n-2}$, need to be integrals of motion of the perturbed vector field $X_{\varepsilon}:=X+\varepsilon A$ restricted to $\Omega^{\prime}$. As $C_1,\dots,C_{n-2}$ are integrals of motion of $X$, the above condition is equivalent to hold true only for the vector field $A$ restricted to $\Omega^{\prime}$.

\textit{Since the main part of this article is concerned with the limit cycles bifurcation problem, and our approach is done in Darboux coordinates (i.e., a set of coordinates generated by a triple $(\Omega^{\prime},\Phi_1,\Phi_2)$, consisting of an open subset $\Omega^{\prime}\subseteq \Omega$, and two analytic functions $\Phi_1,\Phi_2 \in\mathcal{C}^{\omega}(\Omega^{\prime},\mathbb{R})$, such that the map $\Phi:\Omega^{\prime}\rightarrow W^{\prime}=\Phi(\Omega^{\prime})$, given by $\Phi=(\Phi_1,\Phi_2,C_1,\dots,C_{n-2})$, is an analytic diffeomorphism), then the expression of the vector field $A$ will be constructed in such a way that the perturbed vector field $\Phi_{\star}X + \varepsilon \Phi_{\star}A$ restricted to each two-dimensional subset $\Phi(\Sigma_{c_1,\dots,c_{n-2}}\cap \Omega^{\prime})\subseteq W^{\prime}$, will be orbitally equivalent to a general vector field of the type \eqref{phs}}. 

Before starting the construction of $\Phi_{\star}A$, note that $\Phi(\Sigma_{c_1,\dots,c_{n-2}}\cap \Omega^{\prime})$ is exactly the symplectic leaf $\{(y_1,\dots,y_n)\in W^{\prime}\mid y_3=c_1, \dots, y_n =c_{n-2}\}$ of the Poisson manifold $\left(W^{\prime},\{\cdot,\cdot\}_{\nu_{\Phi};\Phi_{\star}C_1,\dots,\Phi_{\star}C_{n-2}}\right)$, where $\nu_{\Phi}=\Phi_{\star}\nu\cdot \Phi_{\star}\operatorname{Jac}(\Phi)$.

Let us compute now the expression of the perturbed vector field $\Phi_{\star}X + \varepsilon \Phi_{\star}A$. In order to do that, recall first from Theorem \eqref{darbnf} the expression of the Darboux normal form of the vector field $X$ written in Hamiltonian form \eqref{systy}, i.e.,
\begin{equation*}
\Phi_{\star}X = \nu_{\Phi}\cdot\left[ \dfrac{\partial(\Phi_{\star}H)}{\partial y_2}\cdot\dfrac{\partial}{\partial y_1}-\dfrac{\partial(\Phi_{\star}H)}{\partial y_1}\cdot\dfrac{\partial}{\partial y_2}\right],
\end{equation*}
where $\nu_{\Phi}=\Phi_{\star}\nu\cdot \Phi_{\star}\operatorname{Jac}(\Phi)$, and $(y_1,\dots,y_n)=\Phi(x_1,\dots,x_n)$ denote the local coordinates on the domain $W^{\prime}$.

Using the above expression of the vector field $\Phi_{\star}X$, and supposing that $\nu_{\Phi}$ is nonvanishing, one obtains successively the following equalities
\begin{align*}
&\Phi_{\star}X +\varepsilon \Phi_{\star}A = \nu_{\Phi}\cdot\left[ \dfrac{\partial(\Phi_{\star}H)}{\partial y_2}\cdot\dfrac{\partial}{\partial y_1}-\dfrac{\partial(\Phi_{\star}H)}{\partial y_1}\cdot\dfrac{\partial}{\partial y_2}\right] + \varepsilon  \Biggl( \langle \nabla \Phi_1, A\rangle\circ\Phi^{-1}\cdot\dfrac{\partial}{\partial y_1} \\
&+ \langle \nabla \Phi_2, A\rangle\circ\Phi^{-1}\cdot\dfrac{\partial}{\partial y_2} + \langle \nabla C_1, A\rangle\circ\Phi^{-1}\cdot\dfrac{\partial}{\partial y_3} + \dots + \langle \nabla C_{n-2}, A\rangle\circ\Phi^{-1}\cdot\dfrac{\partial}{\partial y_{n}} \Biggr)\\
&= \nu_{\Phi}\cdot \left[ \dfrac{\partial(\Phi_{\star}H)}{\partial y_2} + \varepsilon \dfrac{\langle \nabla \Phi_1, A\rangle\circ\Phi^{-1}}{\nu_{\Phi}} \right]\cdot\dfrac{\partial}{\partial y_1} + \nu_{\Phi}\cdot \left[ - \dfrac{\partial(\Phi_{\star}H)}{\partial y_1} + \varepsilon \dfrac{\langle \nabla \Phi_2, A\rangle\circ\Phi^{-1}}{\nu_{\Phi}} \right]\cdot\dfrac{\partial}{\partial y_2}\\
&+ \varepsilon \langle \nabla C_1, A\rangle\circ\Phi^{-1} \cdot\dfrac{\partial}{\partial y_3} + \dots + \varepsilon \langle \nabla C_{n-2}, A\rangle\circ\Phi^{-1} \cdot\dfrac{\partial}{\partial y_n}.
\end{align*}

Since $C_1,\dots,C_{n-2}\in\mathcal{C}^{\omega}(\Omega^{\prime},\mathbb{R})$ need to be first integrals of the restriction to $\Omega^{\prime}$ of the vector field $A$, we have that
\begin{equation}\label{criterion00}
\langle \nabla C_i, A\rangle = 0, \ \text{for each} \  i\in\{1,\dots, n-2\}. 
\end{equation}

Consequently, the expression of the perturbed vector field $\Phi_{\star}X + \varepsilon \Phi_{\star}A$ becomes
\begin{align*}
\Phi_{\star}X  +\varepsilon \Phi_{\star}A &= \nu_{\Phi}\cdot \Biggl[ \dfrac{\partial(\Phi_{\star}H)}{\partial y_2} + \varepsilon \dfrac{\langle \nabla \Phi_1, A\rangle\circ\Phi^{-1}}{\nu_{\Phi}} \Biggr]\cdot\dfrac{\partial}{\partial y_1} \\
&+ \nu_{\Phi}\cdot \Biggl[ - \dfrac{\partial(\Phi_{\star}H)}{\partial y_1} + \varepsilon \dfrac{\langle \nabla \Phi_2, A\rangle\circ\Phi^{-1}}{\nu_{\Phi}} \Biggr]\cdot\dfrac{\partial}{\partial y_2}.
\end{align*}

Hence, the restriction of the perturbed vector filed $\Phi_{\star}X + \varepsilon \Phi_{\star}A$ to a general regular symplectic leaf $\Phi(\Sigma_{c_1,\dots,c_{n-2}}\cap \Omega^{\prime})$, is orbitally equivalent to a general vector field of the type \eqref{phs}, if
\begin{equation}\label{q's}
\langle \nabla \Phi_1, A\rangle\circ\Phi^{-1}=\nu_{\Phi}Q_1, \qquad \langle \nabla \Phi_2, A\rangle\circ\Phi^{-1}=\nu_{\Phi}Q_2,
\end{equation}
where $Q_1,Q_2 \in\mathcal{C}^{\omega}(W^{\prime},\mathbb{R})$, are real analytic functions.

More precisely, the restriction of the perturbed vector filed $\Phi_{\star}X + \varepsilon \Phi_{\star}A$ to a general regular symplectic leaf $\Phi(\Sigma_{c_1,\dots,c_{n-2}}\cap \Omega^{\prime})$, becomes
\begin{align*}
(\Phi_{\star}X +\varepsilon \Phi_{\star}A)|_{\Phi(\Sigma_{c_1,\dots,c_{n-2}}\cap \Omega^{\prime})} &= \nu_{\Phi}\cdot \Biggl[ \dfrac{\partial(\Phi_{\star}H)}{\partial y_2} + \varepsilon Q_1 \Biggr] \cdot\dfrac{\partial}{\partial y_1} \\
&+ \nu_{\Phi}\cdot \Biggl[ - \dfrac{\partial(\Phi_{\star}H)}{\partial y_1} + \varepsilon Q_2 \Biggr]\cdot\dfrac{\partial}{\partial y_2}.
\end{align*}

\begin{remark}\label{remaAB}
The relations \eqref{criterion00} and \eqref{q's} say that, the restriction to $\Omega^{\prime}$ of the vector field $A\in\mathfrak{X}(\Omega)$ preserves the regular part of the symplectic foliation of the Poisson manifold $(\Omega^{\prime},\{\cdot,\cdot\}_{\nu;C_1,\dots,C_{n-2}})$, and moreover, the dynamics restricted to a general regular symplectic leaf is orbitally equivalent to a dynamics of the type \eqref{phs}, if the following relations hold true:
\begin{align*}
\langle \nabla C_i, A|_{\Omega^{\prime}} \rangle &=0, \ i\in\{1,\dots, n-2 \},\\
\langle \nabla \Phi_1, A|_{\Omega^{\prime}} \rangle &=(\nu_{\Phi}\circ\Phi)( Q_1 \circ\Phi), \qquad \langle \nabla \Phi_2, A|_{\Omega^{\prime}} \rangle =(\nu_{\Phi}\circ\Phi)( Q_2 \circ \Phi).
\end{align*}
\end{remark}

In order to obtain the expression of the vector field $A|_{\Omega^{\prime}}$ which verifies the above conditions, we use again the formula given in Proposition \eqref{formulajgp}.

\begin{proposition}\label{AformulaB}
The vector field $A|_{\Omega^{\prime}}\in\mathfrak{X}(\Omega^{\prime})$ fulfilling the relations from Remark \eqref{remaAB} is given by:
\begin{align*}
A|_{\Omega^{\prime}}&={1/\operatorname{Jac}(\Phi)} \cdot\Biggl[ (-1)^{n-1} (\nu_{\Phi}\circ \Phi)(Q_1 \circ\Phi)\cdot\Theta_{1} + (-1)^{n-1} (\nu_{\Phi}\circ \Phi)(Q_2 \circ\Phi)\cdot\Theta_{2}\Biggr],
\end{align*}
where 
\begin{align*}
\Theta_1 &=\star\Biggl( \nabla\Phi_2 \wedge \bigwedge_{j=1}^{n-2}\nabla C_{j} \Biggr),\ \ \Theta_2 =\star\Biggl( \nabla\Phi_1 \wedge \bigwedge_{j=1}^{n-2}\nabla C_{j} \Biggr).
\end{align*}
\end{proposition}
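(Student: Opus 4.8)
The plan is to obtain this formula in one step from Proposition \ref{formulajgp}, exactly as in the proof of Proposition \ref{Aformula}, the only simplification being that here $n-2$ of the prescribed right-hand sides vanish. Concretely, I would apply Proposition \ref{formulajgp} to the Riemannian manifold $(\Omega^{\prime},g)$ with $g$ the Euclidean metric of $\R^n$ restricted to $\Omega^{\prime}$ (so that $\nabla$ is the ordinary gradient and $g(\cdot,\cdot)=\langle\cdot,\cdot\rangle$), to the functions $I_1:=\Phi_1$, $I_2:=\Phi_2$, $I_3:=C_1,\dots,I_n:=C_{n-2}$, and to the data $h_1:=(\nu_{\Phi}\circ\Phi)(Q_1\circ\Phi)$, $h_2:=(\nu_{\Phi}\circ\Phi)(Q_2\circ\Phi)$, $h_3=\dots=h_n:=0$. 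With these choices the relations collected in Remark \ref{remaAB} become precisely the linear system $g(\nabla I_k,A|_{\Omega^{\prime}})=h_k$ for $k\in\{1,\dots,n\}$. The only hypothesis of Proposition \ref{formulajgp} to be checked, namely that $\nabla I_1,\dots,\nabla I_n$ are pointwise linearly independent on $\Omega^{\prime}$, is immediate: the matrix whose rows are these gradients is the Jacobian matrix of $\Phi=(\Phi_1,\Phi_2,C_1,\dots,C_{n-2})$, which is invertible at every point because $\Phi$ is an analytic diffeomorphism.

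It then remains to substitute into formula \eqref{solA} and simplify. The top-degree multivector $\nabla\Phi_1\wedge\nabla\Phi_2\wedge\bigwedge_{j=1}^{n-2}\nabla C_j$ equals, in an orthonormal frame, $\operatorname{Jac}(\Phi)\cdot e_1\wedge\dots\wedge e_n$, whence $\star\bigl(\nabla\Phi_1\wedge\nabla\Phi_2\wedge\bigwedge_{j=1}^{n-2}\nabla C_j\bigr)=\operatorname{Jac}(\Phi)$ and $\bigl\|\nabla\Phi_1\wedge\nabla\Phi_2\wedge\bigwedge_{j=1}^{n-2}\nabla C_j\bigr\|_n^{2}=(\operatorname{Jac}(\Phi))^{2}$, exactly as recorded in the proof of Proposition \ref{Aformula}. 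Feeding this into \eqref{solA}, one factor of $\operatorname{Jac}(\Phi)$ contained in each $\Theta_i$ cancels against the normalizing factor $\|\cdot\|_n^{-2}$, producing the overall prefactor $1/\operatorname{Jac}(\Phi)$; and because $h_3=\dots=h_n=0$, only the $i=1$ and $i=2$ terms of $\sum_{i=1}^n(-1)^{n-i}h_i\Theta_i$ survive, with $\Theta_1=\star(\nabla\Phi_2\wedge\bigwedge_{j=1}^{n-2}\nabla C_j)$ and $\Theta_2=\star(\nabla\Phi_1\wedge\bigwedge_{j=1}^{n-2}\nabla C_j)$. This is precisely the computation already carried out in the proof of Proposition \ref{Aformula}, now with every function $R_{i,j}$ set to zero, and it yields the asserted expression for $A|_{\Omega^{\prime}}$.

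I do not expect any genuine obstacle: the statement is essentially a one-line corollary of Proposition \ref{formulajgp}. The only points that need a little care are bookkeeping — keeping the order $(\Phi_1,\Phi_2,C_1,\dots,C_{n-2})$ of the functions fixed throughout, so that the signs $(-1)^{n-i}$ and the contracted multivectors $\Theta_i$ come out in the stated form, and recalling the elementary Hodge-star identities that turn $\star$ of a top-degree wedge of gradients into the Jacobian determinant. It is also worth remembering that the $\Theta_i$ displayed in the statement are the normalized ones (those of Proposition \ref{formulajgp} divided by $\operatorname{Jac}(\Phi)$), which is exactly what makes the scalar prefactor $1/\operatorname{Jac}(\Phi)$ rather than $1/(\operatorname{Jac}(\Phi))^{2}$.
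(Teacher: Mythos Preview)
Your proposal is correct and follows essentially the same approach as the paper's own proof: both apply Proposition \ref{formulajgp} directly with $I_1=\Phi_1$, $I_2=\Phi_2$, $I_{i+2}=C_i$, use the identities $\star(\nabla\Phi_1\wedge\nabla\Phi_2\wedge\bigwedge_j\nabla C_j)=\operatorname{Jac}(\Phi)$ and $\|\cdot\|_n^2=(\operatorname{Jac}(\Phi))^2$, and observe that the vanishing of $h_3,\dots,h_n$ kills all but the first two terms. Your write-up is simply more explicit than the paper's two-line argument.
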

\begin{proof}
The proof follows directly by formula \eqref{solA}, taking into account that $\Phi=(\Phi_1,\Phi_2,C_1,\dots, C_{n-2})$, and hence we have $$\star\left(\nabla\Phi_1 \wedge \nabla\Phi_2 \wedge \bigwedge_{j=1}^{n-2} \nabla C_j\right) = \operatorname{Jac}(\Phi),$$ and
$$
\left\|\nabla\Phi_1 \wedge \nabla\Phi_2 \wedge \bigwedge_{j=1}^{n-2} \nabla C_j\right\|_{n}^{2}=\left[ \star\left(\nabla\Phi_1 \wedge \nabla\Phi_2 \wedge \bigwedge_{j=1}^{n-2} \nabla C_j\right)\right]^{2}= \left(\operatorname{Jac}(\Phi)\right)^2.
$$
\end{proof}

The results obtained so far in this subsection, can be presented in a unified manner in the following Theorem.

\begin{theorem}\label{nfcisp1}
Let $$X = \sum_{i=1}^{n}\nu \cdot \dfrac{\partial(C_1,\dots,C_{n-2},x_i,H)}{\partial(x_1,\dots,x_n)}\cdot\dfrac{\partial}{\partial{x_i}},$$ be the vector field associated to the real analytic completely integrable system \eqref{sys}, written as a Hamiltonian dynamical system of the form \eqref{systy}, with Hamiltonian $H:=C_{n-1}$, defined on the Poisson manifold $\left(\Omega,\{\cdot,\cdot\}_{\nu;C_1,\dots,C_{n-2}}\right)$. 

Let $(\Omega^{\prime},\Phi_1,\Phi_2)$ be a triple consisting of an open subset  $\Omega^{\prime}\subseteq \Omega$, and two analytic functions $\Phi_1,\Phi_2 \in\mathcal{C}^{\omega}(\Omega^{\prime},\mathbb{R})$, such that the map $\Phi:\Omega^{\prime}\rightarrow W^{\prime}=\Phi(\Omega^{\prime})$, given by $\Phi=(\Phi_1,\Phi_2,C_1,\dots,C_{n-2})$, is an analytic diffeomorphism. Let $Q_1,Q_2\in\mathcal{C}^{\omega}(W^{\prime},\mathbb{R})$, be arbitrary real analytic functions, and let $A\in\mathfrak{X}(\Omega)$ be a vector field whose restriction to $\Omega^{\prime}$ is defined by the formula given in Proposition \eqref{AformulaB}. Then the following conclusions hold true:
\begin{itemize}
\item the push forward by $\Phi$ of the vector field $X$, is a Hamiltonian vector field with Hamiltonian $\Phi_{\star}H=\Phi_{\star}C_{n-1}$, defined on the Poisson manifold $\left(W^{\prime},\{\cdot,\cdot\}_{\nu_{\Phi};\Phi_{\star}C_1,\dots,\Phi_{\star}C_{n-2}}\right)$, having the expression
\begin{equation*}
\Phi_{\star}X = \nu_{\Phi}\cdot\left[ \dfrac{\partial(\Phi_{\star}H)}{\partial y_2}\cdot\dfrac{\partial}{\partial y_1}-\dfrac{\partial(\Phi_{\star}H)}{\partial y_1}\cdot\dfrac{\partial}{\partial y_2}\right],
\end{equation*}
where $\nu_{\Phi}=\Phi_{\star}\nu\cdot \Phi_{\star}\operatorname{Jac}(\Phi)$, and $(y_1,\dots, y_n)=\Phi(x_1,\dots, x_n)$, are the coordinates on $W^\prime$,
\item the restriction to $\Omega^{\prime}$ of the perturbed vector field $X_{\varepsilon}:= X+ \varepsilon A\in \mathfrak{X}(\Omega)$, $\varepsilon >0$, is tangent to the regular part of the symplectic foliation of the Poisson manifold $(\Omega^{\prime},\{\cdot,\cdot\}_{\nu;C_1,\dots,C_{n-2}})$,
\item the push forward by $\Phi$ of the vector field $X_{\varepsilon}$, $\Phi_{\star}X_{\varepsilon}\in\mathfrak{X}(W^\prime)$, is given by
\begin{align*}
\Phi_{\star}X_{\varepsilon} = \nu_{\Phi}\cdot \Biggl[ \dfrac{\partial(\Phi_{\star}H)}{\partial y_2} + \varepsilon Q_1 \Biggr]\cdot\dfrac{\partial}{\partial y_1} + \nu_{\Phi}\cdot \Biggl[ - \dfrac{\partial(\Phi_{\star}H)}{\partial y_1} + \varepsilon Q_2 \Biggr]\cdot\dfrac{\partial}{\partial y_2},
\end{align*}
and is tangent to the regular part of the symplectic foliation of the Poisson manifold $\left(W^{\prime},\{\cdot,\cdot\}_{\nu_{\Phi};\Phi_{\star}C_1,\dots,\Phi_{\star}C_{n-2}}\right)$.
\end{itemize}
\end{theorem}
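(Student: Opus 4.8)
The plan is to assemble the four conclusions from results already in place, since the theorem is essentially a bookkeeping consolidation of Theorem \eqref{darbnf}, Proposition \eqref{formulajgp}, Proposition \eqref{AformulaB}, and the push-forward computation carried out above in this subsection. First I would dispose of the first conclusion: the triple $(\Omega^{\prime},\Phi_1,\Phi_2)$ satisfies exactly the hypothesis of Theorem \eqref{darbnf}, namely that $\Phi=(\Phi_1,\Phi_2,C_1,\dots,C_{n-2})$ is an analytic diffeomorphism onto $W^{\prime}$, so that theorem gives verbatim that $\Phi_{\star}X$ is Hamiltonian with Hamiltonian $\Phi_{\star}H=\Phi_{\star}C_{n-1}$ on $\left(W^{\prime},\{\cdot,\cdot\}_{\nu_{\Phi};\Phi_{\star}C_1,\dots,\Phi_{\star}C_{n-2}}\right)$ and has the stated Darboux normal form.

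For the second conclusion I would first note that, by Proposition \eqref{formulajgp} applied with the Euclidean metric and with $C_1,\dots,C_{n-2},\Phi_1,\Phi_2$ in the role of $I_1,\dots,I_n$ and right-hand sides $0,\dots,0,(\nu_{\Phi}\circ\Phi)(Q_1\circ\Phi),(\nu_{\Phi}\circ\Phi)(Q_2\circ\Phi)$, the vector field $A|_{\Omega^{\prime}}$ of Proposition \eqref{AformulaB} is precisely the unique solution of the system displayed in Remark \eqref{remaAB}; in particular $\mathcal{L}_{A}C_i=\langle\nabla C_i,A|_{\Omega^{\prime}}\rangle=0$ for each $i\in\{1,\dots,n-2\}$, which is relation \eqref{criterion00}. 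Since $C_1,\dots,C_{n-2}$ are first integrals of $X$ as well, they are first integrals of $X_{\varepsilon}|_{\Omega^{\prime}}=X|_{\Omega^{\prime}}+\varepsilon A|_{\Omega^{\prime}}$, hence every regular common level set $\Sigma_{c_1,\dots,c_{n-2}}\cap\Omega^{\prime}$ is invariant under $X_{\varepsilon}$. Because $C_1,\dots,C_{n-2}$ form a complete set of Casimirs of $\{\cdot,\cdot\}_{\nu;C_1,\dots,C_{n-2}}$ (as recalled in the second section), the regular symplectic leaves are exactly the connected components of these regular level sets, so $X_{\varepsilon}|_{\Omega^{\prime}}$ is tangent to the regular part of the symplectic foliation. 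The third conclusion is then obtained by substitution into the push-forward computation already carried out in this subsection: Theorem \eqref{tsv} gives $\Phi_{\star}X_{\varepsilon}=\Phi_{\star}X+\varepsilon\,\Phi_{\star}A$, the chain rule expresses $\Phi_{\star}A$ with components $\langle\nabla\Phi_1,A\rangle\circ\Phi^{-1}$, $\langle\nabla\Phi_2,A\rangle\circ\Phi^{-1}$, $\langle\nabla C_1,A\rangle\circ\Phi^{-1}$, \dots, $\langle\nabla C_{n-2},A\rangle\circ\Phi^{-1}$ along $\partial/\partial y_1,\dots,\partial/\partial y_n$, and plugging in $\langle\nabla C_i,A\rangle=0$ together with the relations \eqref{q's}, $\langle\nabla\Phi_k,A\rangle\circ\Phi^{-1}=\nu_{\Phi}Q_k$, annihilates the $\partial/\partial y_3,\dots,\partial/\partial y_n$ components and reproduces the displayed formula for $\Phi_{\star}X_{\varepsilon}$.

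Finally, for the fourth conclusion I would read off from that formula that $\Phi_{\star}X_{\varepsilon}$ has no $\partial/\partial y_j$ component for $j\ge 3$, i.e. $y_3,\dots,y_n$ are first integrals of $\Phi_{\star}X_{\varepsilon}$; since $\Phi_{\star}C_j=y_{j+2}$ form a complete set of Casimirs of $\{\cdot,\cdot\}_{\nu_{\Phi};\Phi_{\star}C_1,\dots,\Phi_{\star}C_{n-2}}$, the regular symplectic leaves of $W^{\prime}$ are the slices $\{y_3=c_1,\dots,y_n=c_{n-2}\}$, which are therefore invariant --- equivalently, this follows from the second conclusion via Proposition \eqref{lacc}, since the Poisson isomorphism $\Phi$ carries symplectic leaves to symplectic leaves.

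I do not expect a genuine obstacle here, as the content is the correct bookkeeping of the preceding lemmas and the explicit computation; the only step requiring a little care is the passage, in the second and fourth conclusions, from ``$C_j$ (respectively $\Phi_{\star}C_j$) are first integrals'' to ``the vector field is tangent to the regular part of the symplectic foliation'', which relies precisely on the completeness of the Casimir family, so that the regular symplectic leaves coincide with the connected components of the regular common level sets of the Casimirs.
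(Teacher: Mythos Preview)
Your proposal is correct and matches the paper's approach: the theorem is explicitly introduced there as a unified summary of the results obtained so far in the subsection, with no separate proof given, and your assembly from Theorem \eqref{darbnf}, Remark \eqref{remaAB}/Proposition \eqref{AformulaB}, and the push-forward computation preceding the statement is exactly that consolidation. One small correction of attribution: the identity $\Phi_{\star}X_{\varepsilon}=\Phi_{\star}X+\varepsilon\,\Phi_{\star}A$ is just linearity of the push-forward, not Theorem \eqref{tsv}; the latter concerns the Hamiltonian structure of $\Phi_{\star}X$, which you already invoked via Theorem \eqref{darbnf}.
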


Let us now return to the main problem of this article, namely, the bifurcation of limit cycles of perturbed integrable dynamical systems. In order to state the main result of this section, we need to fix some technical details.

Let
\begin{equation}\label{syscis1}
X = \sum_{i=1}^{n}\nu \cdot \dfrac{\partial(C_1,\dots,C_{n-2},x_i,H)}{\partial(x_1,\dots,x_n)}\cdot\dfrac{\partial}{\partial{x_i}},
\end{equation}
be the vector field associated to the real analytic completely integrable system \eqref{sys}, written as a Hamiltonian dynamical system of the form \eqref{systy}, with Hamiltonian $H:=C_{n-1}$, defined on the Poisson manifold $\left(\Omega,\{\cdot,\cdot\}_{\nu;C_1,\dots,C_{n-2}}\right)$. 

Let $(\Omega^{\prime},\Phi_1,\Phi_2)$ be a triple consisting of an open subset  $\Omega^{\prime}\subseteq \Omega$, and two analytic functions $\Phi_1,\Phi_2 \in\mathcal{C}^{\omega}(\Omega^{\prime},\mathbb{R})$, such that the map $\Phi:\Omega^{\prime}\rightarrow W^{\prime}=\Phi(\Omega^{\prime})$, given by $\Phi=(\Phi_1,\Phi_2,C_1,\dots,C_{n-2})$, is an analytic diffeomorphism. The choice of the open set $\Omega^{\prime}$, and respectively the analytic functions $\Phi_1,\Phi_2 \in\mathcal{C}^{\omega}(\Omega^{\prime},\mathbb{R})$, are made in accordance with the location of the periodic orbits (if any) of the completely integrable system \eqref{syscis1}. Let us fix a regular symplectic leaf, $\Sigma_{c_1 ,\dots, c_{n-2}}=C_{1}^{-1}(\{c_1 \})\cap\dots\cap C_{n-2}^{-1}(\{c_{n-2}\})$, of the Poisson manifold $\left(\Omega,\{\cdot,\cdot\}_{\nu;C_1,\dots,C_{n-2}}\right)$, and assume for the sake of simplicity that $\Sigma_{c_1 ,\dots, c_{n-2}}\cap\Omega^{\prime}$ is a connected subset of $\Sigma_{c_1 ,\dots, c_{n-2}}$ (if not, we shall consider instead a connected component of $\Sigma_{c_1 ,\dots, c_{n-2}}\cap\Omega^{\prime}$). Let us recall now the Hypothesis (H).

\medskip
\textbf{Hypothesis (H):} \textit{Suppose there exists a family of periodic orbits of the completely integrable system generated by the vector field \eqref{syscis1}, $\gamma_{h} \subseteq H|_{\Sigma_{c_1 ,\dots, c_{n-2}}\cap\Omega^{\prime}}^{-1}(\{h\})$ (which depends continuously on $h\in (a_{c_1,\dots,c_{n-2}},b_{c_1,\dots,c_{n-2}})$, for some $a_{c_1,\dots,c_{n-2}},b_{c_1,\dots,c_{n-2}}\in\mathbb{R}$), located on the open subset $\Sigma_{c_1 ,\dots, c_{n-2}}\cap\Omega^{\prime}$ of a regular symplectic leaf, $\Sigma_{c_1 ,\dots, c_{n-2}}=C_{1}^{-1}(\{c_1 \})\cap\dots\cap C_{n-2}^{-1}(\{c_{n-2}\})$, of the Poisson manifold $\left(\Omega,\{\cdot,\cdot\}_{\nu;C_1,\dots,C_{n-2}}\right)$.} 

\textit{Recall that the real numbers $a_{c_1 ,\dots, c_{n-2}}< b_{c_1 ,\dots, c_{n-2}}$ depend also on the open set $\Omega^{\prime}$; the reason we do not specify their dependence on $\Omega^{\prime}$ explicitly, is that $\Omega^{\prime}$ is supposed to be the largest subset of $\Omega$ where the Darboux normal form holds.}

\medskip
Then, by Theorem \eqref{nfcisp1} we obtain that $\Phi(\gamma_h)\subseteq (\Phi_{\star}H)|_{\Phi(\Sigma_{c_1 ,\dots, c_{n-2}}\cap\Omega^{\prime})}^{-1}(\{h\})$ (depending continuously on $h\in (a_{c_1,\dots,c_{n-2}},b_{c_1,\dots,c_{n-2}})$), are periodic orbits of the Hamilton-Poisson vector field $\Phi_{\star}X\in\mathfrak{X}(W^{\prime})$, located on the two-dimensional symplectic leaf $\{(y_1,\dots,y_n)\in W^{\prime}\mid y_3=c_1, \dots, y_n =c_{n-2}\}$ of the Poisson manifold $\left(W^{\prime},\{\cdot,\cdot\}_{\nu_{\Phi};\Phi_{\star}C_1,\dots,\Phi_{\star}C_{n-2}}\right)$. 

Moreover, recall also from Theorem \eqref{nfcisp1} that the restriction to the symplectic leaf $\Phi(\Sigma_{c_1,\dots,c_{n-2}}\cap \Omega^{\prime})=\{(y_1,\dots,y_n)\in W^{\prime}\mid y_3=c_1, \dots, y_n =c_{n-2}\}$ of the perturbed vector filed $\Phi_{\star}X_{\varepsilon}$ (where $X_{\varepsilon}=X+\varepsilon A$, with $A$ defined in Proposition \eqref{AformulaB}), has the expression
\begin{equation}\label{systreduced1}
\Phi_{\star}X_{\varepsilon}|_{\Phi(\Sigma_{c_1,\dots,c_{n-2}}\cap \Omega^{\prime})} = \nu_{\Phi}\cdot \Biggl[ \dfrac{\partial(\Phi_{\star}H)}{\partial y_2} + \varepsilon Q_1 \Biggr]\cdot\dfrac{\partial}{\partial y_1} + \nu_{\Phi}\cdot \Biggl[ - \dfrac{\partial(\Phi_{\star}H)}{\partial y_1} + \varepsilon Q_2 \Biggr]\cdot\dfrac{\partial}{\partial y_2}.
\end{equation}
Following \eqref{PPT}, for each $h\in(a_{c_1,\dots,c_{n-2}},b_{c_1,\dots,c_{n-2}})$, we define
\begin{equation*}
J_{c_1,\dots,c_{n-2}}(h):=\oint_{\Phi(\gamma_{h})}-Q_{1}(y_1,y_2,c_1,\dots,c_{n-2})\mathrm{d}y_2 + Q_{2}(y_1,y_2,c_1,\dots,c_{n-2})\mathrm{d}y_1.
\end{equation*}
Now one can state the main result of this section, which provides a criterion that guarantees the existence of bifurcations of limit cycles of a perturbed vector field $X_{\varepsilon}=X+\varepsilon A \in\mathfrak{X}(\Omega)$ (with $A|_{\Omega^{\prime}}$ defined as in Proposition \eqref{AformulaB}), located on the domain $\Sigma_{c_1 ,\dots, c_{n-2}}\cap\Omega^{\prime}$ of certain regular symplectic leaves, $\Sigma_{c_1 ,\dots, c_{n-2}}=C_{1}^{-1}(\{c_1 \})\cap\dots\cap C_{n-2}^{-1}(\{c_{n-2}\})$, of the Poisson configuration manifold $\left(\Omega,\{\cdot,\cdot\}_{\nu;C_1,\dots,C_{n-2}}\right)$ of the completely integrable system \eqref{syscis1}. Recall from Theorem \eqref{nfcisp1} that the orbits of the vector field $X_{\varepsilon}$, located on a domain $\Sigma_{c_1 ,\dots, c_{n-2}}\cap\Omega^{\prime}$, are in one-to-one correspondence with the orbits of the vector field $\Phi_{\star}X_{\varepsilon}$, located on the domain $\Phi(\Sigma_{c_1 ,\dots, c_{n-2}}\cap\Omega^{\prime})$. Before stating the main result, note that, if the Hypothesis (H) holds true on a regular symplectic leaf, then it will also hold true on nearby regular leaves.
\medskip

\begin{theorem}\label{MainThm2}
In the above settings, suppose there exists $(c_1^0,\dots,c_{n-2}^0)\in\mathbb{R}^{n-2}$ such that the Hypothesys (H) holds true on $\Sigma_{c_1^0,\dots,c_{n-2}^0}\cap \Omega^{\prime}$, and moreover, $J_{c_1^0,\dots,c_{n-2}^0}=J_{c_1^0,\dots,c_{n-2}^0}(h)$ is not identically zero on $(a_{c_1^0,\dots,c_{n-2}^0},b_{c_1^0,\dots,c_{n-2}^0})$. Let $U\subset\mathbb{R}^{n-2}$ be an open neighborhood of $(c_1^0,\dots,c_{n-2}^0)$, such the the Hypothesys (H) holds true on $\Sigma_{c_1,\dots,c_{n-2}}\cap \Omega^{\prime}$, for every $(c_1,\dots,c_{n-2})\in U$. Then, for every $(c_1,\dots,c_{n-2})\in U$ such that $J_{c_1,\dots,c_{n-2}}=J_{c_1,\dots,c_{n-2}}(h)$ is not identically zero on $(a_{c_1,\dots,c_{n-2}},b_{c_1,\dots,c_{n-2}})$, the following statements hold true.
\begin{itemize}
\item If the perturbed vector field $X_{\varepsilon}$ has a limit cycle on the domain $\Sigma_{c_1 ,\dots, c_{n-2}}\cap\Omega^{\prime}$ of the regular symplectic leaf $\Sigma_{c_1 ,\dots, c_{n-2}}$, bifurcating from the periodic orbit $\gamma_{h^{\star}}$ of the completely integrable vector field $X$, then $J_{c_1,\dots,c_{n-2}}(h^{\star})=0$.
\item If there exists $h^{\star}\in (a_{c_1 ,\dots, c_{n-2}},b_{c_1 ,\dots, c_{n-2}})$ a simple zero of $J_{c_1,\dots,c_{n-2}}$, then the perturbed vector field $X_{\varepsilon}$ has a unique limit cycle on the domain $\Sigma_{c_1 ,\dots, c_{n-2}}\cap\Omega^{\prime}$ of the regular symplectic leaf $\Sigma_{c_1 ,\dots, c_{n-2}}$, bifurcating from the periodic orbit $\gamma_{h^{\star}}$ of the completely integrable vector field $X$, and moreover, this limit cycle is hyperbolic.
\item If there exists $h^{\star}\in (a_{c_1 ,\dots, c_{n-2}},b_{c_1 ,\dots, c_{n-2}})$, a zero of order $k$ of $J_{c_1,\dots,c_{n-2}}$, then the perturbed vector field $X_{\varepsilon}$ has at most $k$ limit cycles (counting also the multiplicities) on the domain $\Sigma_{c_1 ,\dots, c_{n-2}}\cap\Omega^{\prime}$ of the regular symplectic leaf $\Sigma_{c_1 ,\dots, c_{n-2}}$, bifurcating from the periodic orbit $\gamma_{h^{\star}}$ of the completely integrable vector field $X$.
\item The total number (counting also the multiplicities) of limit cycles of the perturbed vector field $X_{\varepsilon}$, on the domain $\Sigma_{c_1 ,\dots, c_{n-2}}\cap\Omega^{\prime}$ of the regular symplectic leaf $\Sigma_{c_1 ,\dots, c_{n-2}}$, bifurcating from the annulus $\bigcup_{h\in(a,b)}\gamma_h$ of $X$, is bounded by the maximum number (if finite) of isolated zeros (counting also the multiplicities) of $J_{c_1,\dots,c_{n-2}}(h)$ for $h\in (a_{c_1 ,\dots, c_{n-2}},b_{c_1 ,\dots, c_{n-2}})$.
\end{itemize}
\end{theorem}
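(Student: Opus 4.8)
The plan is to reduce the statement, leaf by leaf, to the planar bifurcation result of Theorem \eqref{teorfol}, exactly as in the proof of Theorem \eqref{MainThm1}, the only new feature being that the reduction must be carried out for the whole family of regular symplectic leaves parameterized by the neighbourhood $U$. First I would fix an arbitrary $(c_1,\dots,c_{n-2})\in U$ for which $J_{c_1,\dots,c_{n-2}}$ is not identically zero on $(a_{c_1,\dots,c_{n-2}},b_{c_1,\dots,c_{n-2}})$; by hypothesis, together with the persistence of Hypothesis (H) on nearby leaves noted just before the statement, the family $\gamma_h$ of periodic orbits exists on $\Sigma_{c_1,\dots,c_{n-2}}\cap\Omega^{\prime}$. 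Then I would invoke Theorem \eqref{nfcisp1} to pass through $\Phi$: the orbits of $X_{\varepsilon}$ lying on $\Sigma_{c_1,\dots,c_{n-2}}\cap\Omega^{\prime}$ are in one-to-one correspondence with the orbits of $\Phi_{\star}X_{\varepsilon}$ lying on the two-dimensional leaf $\Phi(\Sigma_{c_1,\dots,c_{n-2}}\cap\Omega^{\prime})=\{(y_1,\dots,y_n)\in W^\prime\mid y_3=c_1,\dots,y_n=c_{n-2}\}$, and this correspondence preserves the property of being a limit cycle, the hyperbolicity, the multiplicity, and is compatible with the Hausdorff distance; so it suffices to prove the four assertions for $\Phi_{\star}X_{\varepsilon}$ restricted to that leaf, together with the family $\Phi(\gamma_h)$.

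Next I would use the explicit form \eqref{systreduced1} of $\Phi_{\star}X_{\varepsilon}|_{\Phi(\Sigma_{c_1,\dots,c_{n-2}}\cap\Omega^{\prime})}$ supplied by Theorem \eqref{nfcisp1}. Since $\nu_{\Phi}$ is nonvanishing and sign invariant, dividing by it --- i.e.\ performing the time reparameterization $\mathrm{d}s=\nu_{\Phi}\mathrm{d}t$ --- produces an orbitally equivalent planar system of the type \eqref{phs},
$$
\mathbb{X}_{\varepsilon}:= X_{\Phi_{\star}H}+\varepsilon\left(Q_1\dfrac{\partial}{\partial y_1}+Q_2\dfrac{\partial}{\partial y_2}\right),
$$
with $X_{\Phi_{\star}H}$ the planar Hamiltonian vector field of $\Phi_{\star}H(\cdot,\cdot,c_1,\dots,c_{n-2})$, for which $\Phi(\gamma_h)$ is, again by Theorem \eqref{nfcisp1}, a continuously varying family of periodic orbits. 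Computing the Poincar\'e--Pontryagin integral of Theorem \eqref{PPT} for $\mathbb{X}_{\varepsilon}$ along $\Phi(\gamma_h)$, with $f=Q_1$ and $g=Q_2$, yields exactly $I(h)=\oint_{\Phi(\gamma_h)}-Q_1\,\mathrm{d}y_2+Q_2\,\mathrm{d}y_1=J_{c_1,\dots,c_{n-2}}(h)$, which is not identically zero by assumption. Applying Theorem \eqref{teorfol} to $\mathbb{X}_{\varepsilon}$ and transporting the conclusions back through $\Phi$ gives the four bulleted statements.

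The main obstacle, such as it is, lies in the orbital-equivalence step: one must ensure that $\nu_{\Phi}$ is nowhere zero and of constant sign on the region under consideration --- this is the standing hypothesis under which the Darboux normal form \eqref{darb} is written --- and one must verify that multiplying a vector field by a nonvanishing analytic function of fixed sign changes neither the set of periodic orbits, nor their hyperbolicity, nor their multiplicities as zeros of the displacement function, nor the Hausdorff limit as $\varepsilon\to 0$, so that Theorem \eqref{teorfol}, stated for vector fields of the precise form \eqref{phs}, genuinely applies to $\Phi_{\star}X_{\varepsilon}$ on each leaf. Beyond this, no uniformity in $(c_1,\dots,c_{n-2})$ across $U$ is required: the conclusions are asserted leaf by leaf, and the role of $U$ is only to record the open set of parameters on which Hypothesis (H) survives, whose nonemptiness was already observed.
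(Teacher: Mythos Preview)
Your proposal is correct and follows essentially the same route as the paper: the paper's proof of Theorem \eqref{MainThm2} simply says that the argument is identical to that of Theorem \eqref{MainThm1}, and you have spelled out precisely that argument, with the correct references to Theorem \eqref{nfcisp1}, the expression \eqref{systreduced1}, the orbital equivalence via the nonvanishing $\nu_{\Phi}$, and the identification of the Poincar\'e--Pontryagin integral with $J_{c_1,\dots,c_{n-2}}$. Your observation that no uniformity in $(c_1,\dots,c_{n-2})$ is required, since the four assertions are leafwise, is also exactly the point.
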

\begin{proof}
The proof follows from exactly the same type of arguments as the proof of Theorem \eqref{MainThm1}.
\end{proof}

\begin{remark}
The main difference between Theorem \eqref{MainThm2} and Theorem \eqref{MainThm1} originate in the construction of the perturbation vector field $A$ (given in, Proposition \eqref{AformulaB}, and respectively, Proposition \eqref{Aformula}), and consequently the difference between the associated perturbed vector fields, $X_{\varepsilon}=X+\varepsilon A$. More precisely, in the case of Theorem \eqref{MainThm1}, the perturbation vector filed $A$ is tangent \textbf{only} to the open domain $\Omega^{\prime}\cap\Sigma_{c_1,\dots,c_{n-2}}$ of the regular symplectic leaf $\Sigma_{c_1,\dots,c_{n-2}}$, while the perturbation vector filed used in Theorem \eqref{MainThm2} is tangent to \textbf{any regular symplectic leaf} of the Poisson manifold $(\Omega^{\prime},\{\cdot,\cdot\}_{\nu; C_1,\dots, C_{n-2}})$. 

To summarize, the Theorem \eqref{MainThm1} can be applied only for an a-priori fixed symplectic leaf, while the Theorem \eqref{MainThm2} works for all regular symplectic leaves, where the Hypothesis (H) holds true. 
\end{remark}

\section{Applications to Jacobi hyperelliptic functions}

The aim of this section is to apply the main theoretical results of this work, to the case of a concrete dynamical system. We shall consider as example, a real analytic completely integrable system which generates the so called \textit{Jacobi hyperelliptic functions} \cite{bates}. 

Let us recall form \cite{bates} the system which generates the Jacobi hyperelliptic functions. The system is a natural generalization of the one generating the classical Jacobi elliptic functions ($\operatorname{sn}$, $\operatorname{cn}$ and $\operatorname{dn}$), and is given by the following system of differential equations in $\mathbb{R}^n$ ($n\geq 3$):
\begin{equation}\label{hyperelli}
\left\{\begin{array}{l}
\dot x_{1}= \hat{x}_{1}x_2 \dots x_n\\
\dot x_{2}= -x_1 \hat{x}_{2} \dots x_n\\
\dot x_{3}= -k_{1}^{2} x_1 x_2 \hat{x}_3 \dots x_n\\
\cdots\\
\dot x_{n}= -k_{n-2}^{2} x_1 x_2 x_3 \dots \hat{x}_n,\\
\end{array}\right.
\end{equation}
where $\hat{x}$ means that the term $x$ is omitted, and $k_{1}, \dots , k_{n-2}$ are fixed nonzero real numbers.

Following \cite{bates}, the components of the solution of \eqref{hyperelli} with initial condition $x_1(0)=0$, $x_2(0)=\dots=x_n(0)=1$, are denoted by 
\begin{equation}\label{elli}
\left\{\begin{array}{l}
x_{1}(t)= \operatorname{sn}(t;k_1,\dots, k_{n-2}),\\
x_{2}(t)= \operatorname{cn}(t;k_1,\dots, k_{n-2}),\\
x_{3}(t)= \operatorname{dn}_{1}(t;k_1,\dots, k_{n-2}),\\
\cdots\\
x_{n}(t)= \operatorname{dn}_{n-2}(t;k_1,\dots, k_{n-2}),\\
\end{array}\right.
\end{equation}
and are called the \textit{Jacobi hyperelliptic functions}.

In order to illustrate the main theoretical results of this article, we need first to write the system \eqref{hyperelli} as a real analytic completely integrable system. This follows simply by recalling from \cite{bates} that $H=\dfrac{1}{2}(x_1^2 + x_2^2)$, $C_1=\dfrac{1}{2}(k_1^2 x_1^2 + x_3^2)$, $\dots$, $C_{n-2}=\dfrac{1}{2}(k_{n-2}^2 x_1^2 + x_n^2)$, are first integrals of \eqref{hyperelli}. Consequently, using the results presented at the beginning of the second section, some simple computations lead to the following result. 

\begin{proposition}
The system \eqref{hyperelli} can be realized as a Hamiltonian system of the type \eqref{systy}, where the Hamiltonian is $H(x_1,\dots,x_n)=\dfrac{1}{2}(x_1^2 + x_2^2)$, and the Poisson manifold is given by $(\mathbb{R}^{n},\{\cdot,\cdot\}_{\nu;C_1,\dots,C_{n-2}})$, where $C_1(x_1,\dots,x_n)=\dfrac{1}{2}(k_1^2 x_1^2 + x_3^2)$, $\dots$, $C_{n-2}(x_1,\dots,x_n)=\dfrac{1}{2}(k_{n-2}^2 x_1^2 + x_n^2)$, and $\nu(x_1,\dots,x_n)=1$.
\end{proposition}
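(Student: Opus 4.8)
The plan is to apply verbatim the Hamiltonian–realization recipe recalled at the beginning of Section~2. According to that recipe, as soon as one exhibits $n-1$ real analytic first integrals $C_1,\dots,C_{n-2},C_{n-1}:=H$ of \eqref{hyperelli} that are functionally independent almost everywhere, the vector field $X$ associated to \eqref{hyperelli} is automatically proportional to $\star(\nabla C_1\wedge\dots\wedge\nabla C_{n-1})$, and its components admit the representation $X_i=\nu\cdot\dfrac{\partial(C_1,\dots,C_{n-2},x_i,H)}{\partial(x_1,\dots,x_n)}$, with $\nu$ the proportionality function; the Poisson structure is then $\{\cdot,\cdot\}_{\nu;C_1,\dots,C_{n-2}}$ and $C_1,\dots,C_{n-2}$ form a complete set of Casimirs. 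Thus the whole statement reduces to two checks: (i)~that $H$ and the $C_j$ are indeed first integrals of \eqref{hyperelli}, functionally independent a.e.\ on $\mathbb{R}^n$; and (ii)~that the choice $\nu\equiv1$ makes the Jacobian determinants on the right-hand side reproduce exactly the right-hand sides of \eqref{hyperelli}.

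For step (i), a one-line computation gives $\langle\nabla H,X\rangle = x_1\dot x_1 + x_2\dot x_2 = x_1(x_2x_3\cdots x_n) - x_2(x_1x_3\cdots x_n)=0$, and for each $j\in\{1,\dots,n-2\}$, $\langle\nabla C_j,X\rangle = k_j^2 x_1\dot x_1 + x_{j+2}\dot x_{j+2} = k_j^2 x_1 x_2\cdots x_n - k_j^2 x_1 x_2\cdots x_n = 0$ (using $x_{j+2}\cdot(x_1 x_2\cdots\widehat{x_{j+2}}\cdots x_n)=x_1 x_2\cdots x_n$). Functional independence almost everywhere follows by inspecting the $(n-1)\times n$ Jacobian matrix of $(C_1,\dots,C_{n-2},H)$: its rows are $\nabla C_j = k_j^2 x_1\, e_1 + x_{j+2}\, e_{j+2}$ and $\nabla H = x_1 e_1 + x_2 e_2$ (with $e_\ell$ the standard basis of $\mathbb{R}^n$), so deleting the first column leaves a diagonal matrix with determinant $x_2 x_3\cdots x_n$, which is nonzero on the dense open set $\{x_2\neq0,\dots,x_n\neq0\}$.

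For step (ii), fix $i$ and expand the determinant $\dfrac{\partial(C_1,\dots,C_{n-2},x_i,H)}{\partial(x_1,\dots,x_n)}$, i.e.\ the determinant of the matrix whose rows are $\nabla C_1,\dots,\nabla C_{n-2},e_i,\nabla H$. Because each $\nabla C_j$ has only two nonzero entries (columns $1$ and $j+2$) and $\nabla H$ only entries in columns $1$ and $2$, repeated cofactor expansion along these sparse rows and columns collapses the determinant to a signed monomial in the $x_\ell$. Concretely: for $i=1$ one uses the row $e_1$ to eliminate column $1$ and then $\nabla H$ to eliminate column $2$, leaving a diagonal minor $x_3 x_4\cdots x_n$ coming from the $C_j$ together with the factor $x_2$, for a total of $x_2 x_3\cdots x_n$; for $i=2$ the analogous reduction yields $-x_1 x_3\cdots x_n$; and for $i=m+2$ with $1\le m\le n-2$ one first uses $e_{m+2}$ together with a row operation against $\nabla C_m$ to isolate the factor $k_m^2 x_1$, then $\nabla H$ for the factor $x_2$, and the remaining diagonal block of the other $\nabla C_j$ for $\prod_{j\neq m}x_{j+2}$, for a total of $-k_m^2 x_1 x_2 x_3\cdots\widehat{x_{m+2}}\cdots x_n$. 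In every case this coincides with the corresponding component $X_i$ of \eqref{hyperelli}, whence $\nu\equiv1$ is the proportionality function, $\Omega=\mathbb{R}^n$, and the realization is precisely the one claimed on the Poisson manifold $(\mathbb{R}^n,\{\cdot,\cdot\}_{1;C_1,\dots,C_{n-2}})$.

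The only delicate point is the sign bookkeeping in the cofactor expansions for the general index $i=m+2$: one must track the signs arising from the expansion along column $m+2$, from the expansion against $\nabla H$ along column $2$, from the reordering that brings the modified $C_m$-row into position, and from the final expansion along it; a short parity count shows that these combine to $-1$, matching the minus sign in the $(m+2)$-nd equation of \eqref{hyperelli}. Everything else is a routine determinant computation.
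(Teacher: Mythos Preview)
Your proposal is correct and follows exactly the route the paper takes: the paper simply says that ``using the results presented at the beginning of the second section, some simple computations lead to the following result,'' and you have carried out precisely those computations (verifying that $H,C_1,\dots,C_{n-2}$ are first integrals, that they are functionally independent on a dense open set, and that the Jacobian determinants with $\nu\equiv 1$ reproduce the components of \eqref{hyperelli}). Your sign bookkeeping for $i=1,2$ checks out directly, and the case $i=m+2$ is indeed a routine cofactor expansion of the same type.
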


As pointed out in \cite{bates}, a consequence of the complete integrability of \eqref{hyperelli}, leads to the motivation of the term \textit{hyperelliptic} used in the definition of the solution of the Cauchy problem generated by the equation \eqref{hyperelli} together with the initial condition $x_1(0)=0$, $x_2(0)=\dots=x_n(0)=1$. Indeed, if we square of the first equation of \eqref{hyperelli}, and we express $x_2^2,\dots,x_n^2$ in terms of $x_1^2$ (using the first integrals $H,C_1,\dots,C_{n-2}$, and the initial condition), one obtains that the first component of the solution of the Cauchy problem, inverts a hyperelliptic integral, i.e.,
$$
\int_{0}^{\operatorname{sn}(t;k_1,\dots,k_{n-2})}\dfrac{du}{\sqrt{(1-u^2)(1-k_1^2 u^2)\dots(1-k_{n-2}^2 u^2)}}=t.
$$ 

Let us return now to the main part of this section, namely, to illustrate the main results of this paper (i.e, Theorem \eqref{MainThm2} and Theorem \eqref{MainThm1}) in the case of the real analytic completely integrable system \eqref{hyperelli}. In order to do that, the second step after writing the system in the form  \eqref{systy}, is to show that the Hypothesis (H) holds true.

Using the same notations as in Hypothesis (H), for each $\varepsilon_3,\dots,\varepsilon_n\in\{-1,1\}$, we define $$\Omega^{\prime}_{\varepsilon_3,\dots,\varepsilon_n}:=\{(x_1,x_2,x_3,\dots,x_n)\in\mathbb{R}^n \mid \varepsilon_3 x_3>0, \dots, \varepsilon_n x_n>0\}\subset \Omega:=\mathbb{R}^n,$$ and $\Phi_1,\Phi_2 : \Omega^{\prime}_{\varepsilon_3,\dots,\varepsilon_n}\longrightarrow \mathbb{R}$, $\Phi_1(x_1,x_2,x_3,\dots,x_n):=x_1$, $\Phi_2(x_1,x_2,x_3,\dots,x_n):=x_2$.

A simple computation shows that the map $\Phi_{\varepsilon_3,\dots,\varepsilon_n}:\Omega^{\prime}_{\varepsilon_3,\dots,\varepsilon_n}\longrightarrow \Phi_{\varepsilon_3,\dots,\varepsilon_n}(\Omega^{\prime}_{\varepsilon_3,\dots,\varepsilon_n}):=W^{\prime}$, given by $\Phi_{\varepsilon_3,\dots,\varepsilon_n}:=(\Phi_1,\Phi_2,C_1,\dots,C_{n-2})$, is an analytic diffeomorphism between $\Omega^{\prime}_{\varepsilon_3,\dots,\varepsilon_n}$ and $W^{\prime}$, where
$$
W^{\prime}=\{(y_1,\dots,y_n)\in\mathbb{R}^n \mid k_1^2 y_1^2<2 y_3,\dots, k_{n-2}^2 y_1^2<2 y_n\}.
$$

If one denotes by 
$$
X:=\hat{x}_{1}x_2 \dots x_n \cdot \dfrac{\partial}{\partial x_1}-x_1 \hat{x}_{2} \dots x_n \cdot \dfrac{\partial}{\partial x_2}-k_{1}^{2} x_1 x_2 \hat{x}_3 \cdot\dfrac{\partial}{\partial x_3} - \dots  -k_{n-2}^{2} x_1 x_2 x_3 \dots \hat{x}_n \cdot \dfrac{\partial}{\partial x_n},
$$
the restriction to $\Omega^{\prime}_{\varepsilon_3,\dots,\varepsilon_n}$ of the vector field which generates the dynamical system \eqref{hyperelli}, then the vector field $(\Phi_{\varepsilon_3,\dots,\varepsilon_n})_{\star}X\in\mathfrak{X}(W^\prime)$ has the expression
$$
(\Phi_{\varepsilon_3,\dots,\varepsilon_n})_{\star}X=\varepsilon_3 \dots \varepsilon_n\cdot \sqrt{(2y_3-k_1^2y_1^2)\cdot\dots\cdot(2y_n-k_{n-2}^2y_1^2)}\cdot\left(y_2 \cdot \dfrac{\partial}{\partial y_1} - y_1 \cdot \dfrac{\partial}{\partial y_2}\right).
$$
Consequently, the system \eqref{hyperelli} written in coordinates $(y_1,\dots,y_n)\in W^{\prime}$ becomes
\begin{align}\label{hyperdarb}
\left\{\begin{array}{l}
\dot y_{1}= \varepsilon_3 \dots \varepsilon_n \sqrt{(2y_3-k_1^2y_1^2)\cdot\dots\cdot (2y_n-k_{n-2}^2y_1^2)}\cdot y_2\\
\dot y_{2}= \varepsilon_3 \dots \varepsilon_n \sqrt{(2y_3-k_1^2y_1^2)\cdot\dots\cdot (2y_n-k_{n-2}^2y_1^2)}\cdot (-y_1)\\
\dot y_{3}= 0\\
\cdots\\
\dot y_{n}= 0.\\
\end{array}\right.
\end{align}
Moreover, by Theorem \eqref{darbnf}, the system \eqref{hyperdarb} admits a Hamiltonian realization of the type \eqref{systy}, with Hamiltonian $$(\Phi_{\varepsilon_3,\dots,\varepsilon_n})_{\star}H(y_1,\dots,y_n)=\dfrac{1}{2}(y_1^2 + y_2^2),$$ modeled on the Poisson manifold $(W^{\prime},\{\cdot,\cdot\}_{\nu_{\Phi_{\varepsilon_3,\dots,\varepsilon_n}};(\Phi_{\varepsilon_3,\dots,\varepsilon_n})_{\star}C_1,\dots,(\Phi_{\varepsilon_3,\dots,\varepsilon_n})_{\star}C_{n-2}})$, where $$(\Phi_{\varepsilon_3,\dots,\varepsilon_n})_{\star}C_1(y_1,\dots,y_n)=y_3,\dots, (\Phi_{\varepsilon_3,\dots,\varepsilon_n})_{\star}C_{n-2}(y_1,\dots,y_n)=y_n,$$ and $\nu_{\Phi_{\varepsilon_3,\dots,\varepsilon_n}}(y_1,\dots,y_n)=\varepsilon_3 \dots \varepsilon_n \sqrt{(2y_3-k_1^2y_1^2)\cdot\dots\cdot(2y_n-k_{n-2}^2y_1^2)}$. More precisely, this follows since the analytic diffeomorphism $$\Phi_{\varepsilon_3,\dots,\varepsilon_n}:(\Omega^{\prime}_{\varepsilon_3,\dots,\varepsilon_n},\{\cdot,\cdot\}_{\nu;C_1,\dots,C_{n-2}})\longrightarrow (W^{\prime},\{\cdot,\cdot\}_{\nu_{\Phi_{\varepsilon_3,\dots,\varepsilon_n}};(\Phi_{\varepsilon_3,\dots,\varepsilon_n})_{\star}C_1,\dots,(\Phi_{\varepsilon_3,\dots,\varepsilon_n})_{\star}C_{n-2}})$$ is also a Poisson isomorphism.

Let $c_1,\dots,c_{n-2}>0$ be strictly positive. Then, the set 
\begin{align*}
&\Sigma_{c_1,\dots,c_{n-2}}\cap\Omega^{\prime}_{\varepsilon_3,\dots,\varepsilon_n}:=\{(x_1,\dots,x_n)\in\Omega^{\prime}_{\varepsilon_3,\dots,\varepsilon_n} \mid C_1(x_1,\dots,x_n)=c_1, \dots, C_{n-2}(x_1,\dots,x_n)=c_{n-2}\}\\
&=\left\{\left(x_1,x_2,\varepsilon_3 \sqrt{2c_1-k_1^2x_1^2},\dots,\varepsilon_n \sqrt{2c_{n-2}-k_{n-2}^2x_1^2}\right)\in\mathbb{R}^n \mid - \rho_{c_1,\dots,c_{n-2}}<x_1<\rho_{c_1,\dots,c_{n-2}} \right\},
\end{align*}
where $\rho_{c_1,\dots,c_{n-2}}:=\min_{i=\overline{1,{n-2}}}\dfrac{\sqrt{2c_i}}{k_i}$, is a regular symplectic leaf of the Poisson manifold $(\Omega^{\prime}_{\varepsilon_3,\dots,\varepsilon_n},\{\cdot,\cdot\}_{\nu;C_1,\dots,C_{n-2}})$.

Consequently, the image of $\Sigma_{c_1,\dots,c_{n-2}}\cap\Omega^{\prime}_{\varepsilon_3,\dots,\varepsilon_n}$ through the Poisson isomorphism $\Phi_{\varepsilon_3,\dots,\varepsilon_n}$,
\begin{align*}
&\Phi_{\varepsilon_3,\dots,\varepsilon_n}(\Sigma_{c_1,\dots,c_{n-2}}\cap\Omega^{\prime}_{\varepsilon_3,\dots,\varepsilon_n})=\{(y_1,y_2,c_1,\dots,c_{n-2})\in\mathbb{R}^{n} \mid - \rho_{c_1,\dots,c_{n-2}}<y_1 < \rho_{c_1,\dots,c_{n-2}}\},
\end{align*}
is a regular symplectic leaf of the Poisson manifold $$(W^{\prime},\{\cdot,\cdot\}_{\nu_{\Phi_{\varepsilon_3,\dots,\varepsilon_n}};(\Phi_{\varepsilon_3,\dots,\varepsilon_n})_{\star}C_1,\dots,(\Phi_{\varepsilon_3,\dots,\varepsilon_n})_{\star}C_{n-2}}).$$

Since the restriction of the Hamiltonian system \eqref{hyperdarb} to the regular symplectic leaf $\Phi_{\varepsilon_3,\dots,\varepsilon_n}(\Sigma_{c_1,\dots,c_{n-2}}\cap\Omega^{\prime}_{\varepsilon_3,\dots,\varepsilon_n})$, is given by the system
\begin{align*}
\left\{\begin{array}{l}
\dot y_{1}= \varepsilon_3 \dots \varepsilon_n \sqrt{(2c_1-k_1^2y_1^2)\cdot\dots\cdot (2c_{n-2}-k_{n-2}^2y_1^2)}\cdot y_2\\
\dot y_{2}= \varepsilon_3 \dots \varepsilon_n \sqrt{(2c_1-k_1^2y_1^2)\cdot\dots\cdot (2c_{n-2}-k_{n-2}^2y_1^2)}\cdot (-y_1),\\
\end{array}\right.
\end{align*}
which has a center located at $(y_1,y_2)=(0,0)$, we get the existence of a continuous family of periodic orbits, represented by the circles $\Phi_{\varepsilon_3,\dots,\varepsilon_n}(\gamma_h^{\varepsilon_3,\dots,\varepsilon_n})\subset \Phi_{\varepsilon_3,\dots,\varepsilon_n}(\Sigma_{c_1,\dots,c_{n-2}}\cap\Omega^{\prime}_{\varepsilon_3,\dots,\varepsilon_n})$
\begin{equation}\label{circlesphi}
\Phi_{\varepsilon_3,\dots,\varepsilon_n}(\gamma_h^{\varepsilon_3,\dots,\varepsilon_n}):=\{(y_1,y_2,c_1,\dots,c_{n-2})\in\mathbb{R}^{n} \mid \dfrac{1}{2}(y_1^2+y_2^2)=h\}, \quad 0<h<\dfrac{1}{2}\rho^2_{c_1,\dots,c_{n-2}}. 
\end{equation}
As the symplectic leaves are dynamically invariant, the periodic orbits from the family \eqref{circlesphi}, are also periodic orbits for the system \eqref{hyperdarb}.

Consequently, transporting $\Phi_{\varepsilon_3,\dots,\varepsilon_n}(\gamma_h^{\varepsilon_3,\dots,\varepsilon_n})$ to $\Omega^{\prime}_{\varepsilon_3,\dots,\varepsilon_n}$ through the analytic diffeomorphism $\Phi_{\varepsilon_3,\dots,\varepsilon_n}^{-1}$, we obtain that the continuous family of closed curves $\gamma_h^{\varepsilon_3,\dots,\varepsilon_n} \subset \Sigma_{c_1,\dots,c_{n-2}}\cap\Omega^{\prime}_{\varepsilon_3,\dots,\varepsilon_n}$, $0<h<\dfrac{1}{2}\rho^2_{c_1,\dots,c_{n-2}}$, given by
\begin{align}\label{circles}
\gamma_h^{\varepsilon_3,\dots,\varepsilon_n}:&=\left\{ \left(x_1,x_2,\varepsilon_3 \sqrt{2c_1-k_1^2x_1^2},\dots,\varepsilon_n \sqrt{2c_{n-2}-k_{n-2}^2x_1^2}\right)\in\mathbb{R}^n \mid \dfrac{1}{2}(x_1^2+x_2^2)=h\right\},
\end{align}
are periodic orbits of the system \eqref{hyperelli}. 

\textit{Hence, the Hypothesis (H) is verified on each regular symplectic leaf $\Sigma_{c_1,\dots,c_{n-2}}\cap\Omega^{\prime}_{\varepsilon_3,\dots,\varepsilon_n}$, ($c_1,\dots,c_{n-2}>0$), of the Poisson manifold $(\Omega^{\prime}_{\varepsilon_3,\dots,\varepsilon_n},\{\cdot,\cdot\}_{\nu;C_1,\dots,C_{n-2}})$.}

The Hypothesis (H) being verified, one can start analyzing the bifurcation problem. Recall that the general bifurcation problem we are dealing with, is mainly concerned with the persistence, and respectively upper bounds estimates on the number of periodic orbits that survive under analytic perturbations. More precisely, if one considers an analytic perturbation of the vector field $X$ which generates the integrable system \eqref{hyperelli}, $X_{\varepsilon}:=X+\varepsilon A \in \mathfrak{X}(\Omega^{\prime}_{\varepsilon_3,\dots,\varepsilon_n})$, ($\varepsilon>0$), we demand under which conditions there exist $h^{\star}\in (0,1/2 \rho^2_{c_1,\dots,c_{n-2}})$ and $\varepsilon^{\star}>0$, such that $X_\varepsilon$ has a limit cycle $\Gamma_{\varepsilon}^{\varepsilon_3,\dots,\varepsilon_n}\subset\Omega^{\prime}_{\varepsilon_3,\dots,\varepsilon_n}$, for each $0<\varepsilon<\varepsilon^{\star}$, and $\lim_{\varepsilon\rightarrow 0}\Gamma_{\varepsilon}^{\varepsilon_3,\dots,\varepsilon_n}=\gamma_{h^{\star}}^{\varepsilon_3,\dots,\varepsilon_n}$, in the sense of Hausdorff distance? Recall that, in this case we said the limit cycle $\Gamma_{\varepsilon}^{\varepsilon_3,\dots,\varepsilon_n}$ bifurcate from the periodic orbit $\gamma_{h^{\star}}^{\varepsilon_3,\dots,\varepsilon_n}$. Moreover, besides the existence problem, we are interested in providing an upper bound for the total number of limit cycles of $X_{\varepsilon}$ (located on $\Sigma_{c_1,\dots,c_{n-2}}\cap\Omega^{\prime}_{\varepsilon_3,\dots,\varepsilon_n}$) which bifurcate from the family $\bigcup_{h\in(0,1/2 \rho^2_{c_1,\dots,c_{n-2}})}\gamma_{h}^{\varepsilon_3,\dots,\varepsilon_n}$ of periodic orbits of the vector field $X$.

At this point one distinguish two cases: first, if the perturbed vector field $X_{\varepsilon}\in \mathfrak{X}(\Omega^{\prime}_{\varepsilon_3,\dots,\varepsilon_n})$ keeps dynamically invariant some fixed regular symplectic leaf $\Sigma_{c_1,\dots,c_{n-2}}\cap\Omega^{\prime}_{\varepsilon_3,\dots,\varepsilon_n}$ of the Poisson manifold $(\Omega^{\prime}_{\varepsilon_3,\dots,\varepsilon_n},\{\cdot,\cdot\}_{\nu;C_1,\dots,C_{n-2}})$, and second, if the perturbed vector field $X_{\varepsilon}$ keeps dynamically invariant the whole regular part of the symplectic foliation of the Poisson manifold $(\Omega^{\prime}_{\varepsilon_3,\dots,\varepsilon_n},\{\cdot,\cdot\}_{\nu;C_1,\dots,C_{n-2}})$. The main difference between the above approaches is the expression of the perturbation vector field $A$, as provided by Proposition \eqref{Aformula}, and Proposition \eqref{AformulaB}. Let us start analyzing the first case.
\medskip

\subsection{Perturbations keeping invariant an a-priori fixed symplectic leaf}
\medskip
The aim of this subsection is to give an answer to the limit cycles bifurcation problem, in the case of an analytic perturbation of the system \eqref{hyperelli} which keeps dynamically invariant an a-priory fixed regular symplectic leaf, $\Sigma_{c_1,\dots,c_{n-2}}\cap\Omega^{\prime}_{\varepsilon_3,\dots,\varepsilon_n}$, of the Poisson manifold $(\Omega^{\prime}_{\varepsilon_3,\dots,\varepsilon_n},\{\cdot,\cdot\}_{\nu;C_1,\dots,C_{n-2}})$.

Using the Proposition \eqref{Aformula}, some straightforward computations lead to the following general form of the perturbation vector filed which keeps invariant an a-priory fixed regular symplectic leaf, $\Sigma_{c_1,\dots,c_{n-2}}\cap\Omega^{\prime}_{\varepsilon_3,\dots,\varepsilon_n}$:
\begin{align*}
A &|_{\Omega^{\prime}_{\varepsilon_3,\dots,\varepsilon_n}}=\hat{x}_{1}\hat{x}_{2}x_3\dots x_n \cdot P_1(x_1,x_2,\dfrac{1}{2}(k_1^2 x_1^2+x_3^2),\dots,\dfrac{1}{2}(k_{n-2}^2 x_1^2+x_n^2))\cdot \dfrac{\partial}{\partial x_1}\\
&+ \hat{x}_{1}\hat{x}_{2}x_3\dots x_n \cdot P_2(x_1,x_2,\dfrac{1}{2}(k_1^2 x_1^2+x_3^2),\dots,\dfrac{1}{2}(k_{n-2}^2 x_1^2+x_n^2))\cdot \dfrac{\partial}{\partial x_2}\\
&+[ -k_1^2 x_1\hat{x}_{2}\hat{x}_3\dots x_n \cdot P_1(x_1,x_2,\dfrac{1}{2}(k_1^2 x_1^2+x_3^2),\dots,\dfrac{1}{2}(k_{n-2}^2 x_1^2+x_n^2)) \\
&+ \sum_{i=1}^{n-2}\dfrac{k_i^2 x_1^2+x_{i+2}^2 - 2 c_i}{2 x_3}\cdot R_{1,i}(x_1,x_2,\dfrac{1}{2}(k_1^2 x_1^2+x_3^2),\dots,\dfrac{1}{2}(k_{n-2}^2 x_1^2+x_n^2))]\cdot \dfrac{\partial}{\partial x_3}\\
&+\dots +\\
&+[ -k_{n-2}^2 x_1\hat{x}_{2}x_3\dots \hat{x}_n \cdot P_1(x_1,x_2,\dfrac{1}{2}(k_1^2 x_1^2+x_3^2),\dots,\dfrac{1}{2}(k_{n-2}^2 x_1^2+x_n^2)) \\
&+ \sum_{i=1}^{n-2}\dfrac{k_i^2 x_1^2+x_{i+2}^2 - 2 c_i}{2 x_n}\cdot R_{n-2,i}(x_1,x_2,\dfrac{1}{2}(k_1^2 x_1^2+x_3^2),\dots,\dfrac{1}{2}(k_{n-2}^2 x_1^2+x_n^2))]\cdot \dfrac{\partial}{\partial x_n},
\end{align*}
where $P_1,P_2,R_{i,j}\in\mathcal{C}^{\omega}(W^{\prime},\mathbb{R})$, $i,j\in\{1,\dots,{n-2}\}$, are arbitrary real analytic functions defined on 
$$
W^{\prime}=\Phi_{\varepsilon_3,\dots,\varepsilon_n}(\Omega^{\prime}_{\varepsilon_3,\dots,\varepsilon_n})=\{(y_1,\dots,y_n)\in\mathbb{R}^n \mid k_1^2 y_1^2<2 y_3,\dots, k_{n-2}^2 y_1^2<2 y_n\}.
$$

Consequently, the general analytic perturbation of the system \eqref{hyperelli} which keeps dynamically invariant the a-priori fixed regular symplectic leaf  $\Sigma_{c_1,\dots,c_{n-2}}\cap\Omega^{\prime}_{\varepsilon_3,\dots,\varepsilon_n}$, is given by:

\begin{align}\label{hyperpert1}
\left\{\begin{array}{l}
\dot x_{1}= \hat{x}_{1}x_2 \dots x_n + \varepsilon \cdot \hat{x}_{1}\hat{x}_{2}x_3\dots x_n \cdot P_1(x_1,x_2,\dfrac{1}{2}(k_1^2 x_1^2+x_3^2),\dots,\dfrac{1}{2}(k_{n-2}^2 x_1^2+x_n^2))\\
\dot x_{2}= -x_1 \hat{x}_{2} \dots x_n + \varepsilon \cdot \hat{x}_{1}\hat{x}_{2}x_3\dots x_n \cdot P_2(x_1,x_2,\dfrac{1}{2}(k_1^2 x_1^2+x_3^2),\dots,\dfrac{1}{2}(k_{n-2}^2 x_1^2+x_n^2))\\
\dot x_{3}= -k_{1}^{2} x_1 x_2 \hat{x}_3 \dots x_n + \varepsilon \cdot [ -k_1^2 x_1\hat{x}_{2}\hat{x}_3\dots x_n \cdot P_1(x_1,x_2,\dfrac{1}{2}(k_1^2 x_1^2+x_3^2),\dots,\dfrac{1}{2}(k_{n-2}^2 x_1^2+x_n^2)) \\
+ \sum_{i=1}^{n-2}\dfrac{k_i^2 x_1^2+x_{i+2}^2 - 2 c_i}{2 x_3}\cdot R_{1,i}(x_1,x_2,\dfrac{1}{2}(k_1^2 x_1^2+x_3^2),\dots,\dfrac{1}{2}(k_{n-2}^2 x_1^2+x_n^2))]\\
\cdots\\
\dot x_{n}= -k_{n-2}^{2} x_1 x_2 x_3 \dots \hat{x}_n + \varepsilon \cdot [ -k_{n-2}^2 x_1\hat{x}_{2}x_3\dots \hat{x}_n \cdot P_1(x_1,x_2,\dfrac{1}{2}(k_1^2 x_1^2+x_3^2),\dots,\dfrac{1}{2}(k_{n-2}^2 x_1^2+x_n^2)) \\
+ \sum_{i=1}^{n-2}\dfrac{k_i^2 x_1^2+x_{i+2}^2 - 2 c_i}{2 x_n}\cdot R_{n-2,i}(x_1,x_2,\dfrac{1}{2}(k_1^2 x_1^2+x_3^2),\dots,\dfrac{1}{2}(k_{n-2}^2 x_1^2+x_n^2))],\\
\end{array}\right.
\end{align}
where $P_1,P_2,R_{i,j}\in\mathcal{C}^{\omega}(W^{\prime},\mathbb{R})$, $i,j\in\{1,\dots,{n-2}\}$, are arbitrary real analytic functions.

Using the analytic diffeomorphism $\Phi_{\varepsilon_3,\dots,\varepsilon_n}:\Omega^{\prime}_{\varepsilon_3,\dots,\varepsilon_n}\longrightarrow W^{\prime}$,
$$
\Phi_{\varepsilon_3,\dots,\varepsilon_n}(x_1,\dots,x_n)=\left(x_1,x_2,\dfrac{1}{2}(k_1^2 x_1^2+x_3^2),\dots,\dfrac{1}{2}(k_{n-2}^2 x_1^2+x_n^2)\right),
$$
the perturbed system \eqref{hyperpert1} written in coordinates $(y_1,\dots,y_n)\in W^{\prime}$, becomes
\begin{align}\label{hyperpert1red}
\left\{\begin{array}{l}
\dot y_{1}= \varepsilon_3 \dots \varepsilon_n \sqrt{(2 y_3 -k_1^2 y_1^2)\dots (2 y_n -k_{n-2}^2 y_1^2)}\cdot (y_2 + \varepsilon P_1(y_1,\dots, y_n))\\
\dot y_{2}= \varepsilon_3 \dots \varepsilon_n \sqrt{(2 y_3 -k_1^2 y_1^2)\dots (2 y_n -k_{n-2}^2 y_1^2)}\cdot (-y_1 + \varepsilon P_2(y_1,\dots, y_n))\\
\dot y_{3}= \varepsilon \cdot \sum_{i=1}^{n-2}(y_{i+2}-c_i)R_{1,i}(y_1,\dots, y_n)\\
\cdots\\
\dot y_{n}= \varepsilon \cdot \sum_{i=1}^{n-2}(y_{i+2}-c_i)R_{n-2,i}(y_1,\dots, y_n).\\
\end{array}\right.
\end{align}
Moreover, since $$\Phi_{\varepsilon_3,\dots,\varepsilon_n}:(\Omega^{\prime}_{\varepsilon_3,\dots,\varepsilon_n},\{\cdot,\cdot\}_{\nu;C_1,\dots,C_{n-2}})\longrightarrow (W^{\prime},\{\cdot,\cdot\}_{\nu_{\Phi_{\varepsilon_3,\dots,\varepsilon_n}};(\Phi_{\varepsilon_3,\dots,\varepsilon_n})_{\star}C_1,\dots,(\Phi_{\varepsilon_3,\dots,\varepsilon_n})_{\star}C_{n-2}})$$ is a Poisson isomorphism, the dynamically invariant set of \eqref{hyperpert1red},
\begin{align*}
&\Phi_{\varepsilon_3,\dots,\varepsilon_n}(\Sigma_{c_1,\dots,c_{n-2}}\cap\Omega^{\prime}_{\varepsilon_3,\dots,\varepsilon_n})=\{(y_1,y_2,c_1,\dots,c_{n-2})\in\mathbb{R}^{n} \mid - \rho_{c_1,\dots,c_{n-2}}<y_1 < \rho_{c_1,\dots,c_{n-2}}\},
\end{align*}
is a regular symplectic leaf of the Poisson manifold $$(W^{\prime},\{\cdot,\cdot\}_{\nu_{\Phi_{\varepsilon_3,\dots,\varepsilon_n}};(\Phi_{\varepsilon_3,\dots,\varepsilon_n})_{\star}C_1,\dots,(\Phi_{\varepsilon_3,\dots,\varepsilon_n})_{\star}C_{n-2}}).$$

The restriction of \eqref{hyperpert1red} to the dynamically invariant set $\Phi(\Sigma_{c_1,\dots,c_{n-2}}\cap\Omega^{\prime}_{\varepsilon_3,\dots,\varepsilon_n})$ becomes 
\begin{equation}\label{erte}
\left\{\begin{array}{l}
\dot y_{1}= \varepsilon_3 \dots \varepsilon_n \sqrt{(2 c_1 -k_1^2 y_1^2)\dots (2 c_{n-2} -k_{n-2}^2 y_1^2)}\cdot (y_2 + \varepsilon P_1(y_1,y_2,c_1,\dots, c_{n-2}))\\
\dot y_{2}= \varepsilon_3 \dots \varepsilon_n \sqrt{(2 c_1 -k_1^2 y_1^2)\dots (2 c_{n-2} -k_{n-2}^2 y_1^2)}\cdot (-y_1 + \varepsilon P_2(y_1,y_2,c_1,\dots, c_{n-2}))\\
y_{3}= c_1\\
\cdots\\
y_{n}= c_{n-2}.\\
\end{array}\right.
\end{equation}

Let us define for each $h\in(0,\dfrac{1}{2}\rho_{c_1,\dots,c_{n-2}}^2)$
$$
I_{c_1,\dots,c_{n-2}}(h):=\oint_{\{(y_1,y_2) | y_1^2+y_2^2=2h\}}-P_{1}(y_1,y_2,c_1,\dots,c_{n-2})\mathrm{d}y_2 + P_{2}(y_1,y_2,c_1,\dots,c_{n-2})\mathrm{d}y_1.
$$

Note that $I_{c_1,\dots,c_{n-2}}$ has the same expression for each dynamically invariant set\\
$\Phi(\Sigma_{c_1,\dots,c_{n-2}}\cap\Omega^{\prime}_{\varepsilon_3,\dots,\varepsilon_n})$, independently of the choice of $\varepsilon_3,\dots,\varepsilon_n\in\{-1,1\}$. 

Let us state now the main result of this subsection, namely the Theorem \eqref{MainThm1} in the case of the system \eqref{hyperpert1}.

\begin{theorem}\label{mainThmExample1}
Suppose that $I_{c_1,\dots,c_{n-2}}=I_{c_1,\dots,c_{n-2}}(h)$ is not identically zero on $(0,\dfrac{1}{2}\rho_{c_1,\dots,c_{n-2}}^2)$. Then the following statements hold true for every $\varepsilon_3,\dots,\varepsilon_n\in\{-1,1\}$.
\begin{itemize}
\item If the perturbed system \eqref{hyperpert1} has a limit cycle on the symplectic leaf $\Sigma_{c_1 ,\dots, c_{n-2}}\cap\Omega^{\prime}_{\varepsilon_3,\dots,\varepsilon_n}$, bifurcating from the periodic orbit $\gamma_{h^{\star}}^{\varepsilon_3,\dots,\varepsilon_n}$ of the completely integrable system \eqref{hyperelli}, then $I_{c_1,\dots,c_{n-2}}(h^{\star})=0$.
\item If there exists $h^{\star}\in (0,\dfrac{1}{2}\rho_{c_1,\dots,c_{n-2}}^2)$ a simple zero of $I_{c_1,\dots,c_{n-2}}$, then the perturbed system \eqref{hyperpert1} has a unique limit cycle on the symplectic leaf $\Sigma_{c_1 ,\dots, c_{n-2}}\cap\Omega^{\prime}_{\varepsilon_3,\dots,\varepsilon_n}$, bifurcating from the periodic orbit $\gamma_{h^{\star}}^{\varepsilon_3,\dots,\varepsilon_n}$ of the completely integrable system \eqref{hyperelli}, and moreover, this limit cycle is hyperbolic.
\item If there exists $h^{\star}\in (0,\dfrac{1}{2}\rho_{c_1,\dots,c_{n-2}}^2)$, a zero of order $k$ of $I_{c_1,\dots,c_{n-2}}$, then the perturbed system \eqref{hyperpert1} has at most $k$ limit cycles (counting also the multiplicities) on the symplectic leaf $\Sigma_{c_1 ,\dots, c_{n-2}}\cap\Omega^{\prime}_{\varepsilon_3,\dots,\varepsilon_n}$, bifurcating from the periodic orbit $\gamma_{h^{\star}}^{\varepsilon_3,\dots,\varepsilon_n}$ of the completely integrable \eqref{hyperelli}.
\item The total number (counting also the multiplicities) of limit cycles of the perturbed system \eqref{hyperpert1}, on the symplectic leaf $\Sigma_{c_1 ,\dots, c_{n-2}}\cap\Omega^{\prime}_{\varepsilon_3,\dots,\varepsilon_n}$, bifurcating from the family $\bigcup_{h\in (0,1/2\rho_{c_1,\dots,c_{n-2}}^2)}\gamma_h^{\varepsilon_3,\dots,\varepsilon_n}$ of periodic orbits of the completely integrable \eqref{hyperelli}, is bounded by the maximum number (if finite) of isolated zeros (counting also the multiplicities) of $I_{c_1,\dots,c_{n-2}}(h)$ for $h\in (0,\dfrac{1}{2}\rho_{c_1,\dots,c_{n-2}}^2)$.
\end{itemize}
\end{theorem}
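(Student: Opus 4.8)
The plan is to obtain Theorem \eqref{mainThmExample1} as a direct specialization of Theorem \eqref{MainThm1} to the completely integrable system \eqref{hyperelli}, to the Darboux chart $\Phi_{\varepsilon_3,\dots,\varepsilon_n}$, and to the perturbation vector field $A|_{\Omega^{\prime}_{\varepsilon_3,\dots,\varepsilon_n}}$ produced by Proposition \eqref{Aformula}. First I would record that all the structural hypotheses of Theorem \eqref{MainThm1} are already in place: the triple $(\Omega^{\prime}_{\varepsilon_3,\dots,\varepsilon_n},\Phi_1,\Phi_2)$ with $\Phi_1=x_1$, $\Phi_2=x_2$ yields the analytic diffeomorphism $\Phi_{\varepsilon_3,\dots,\varepsilon_n}=(\Phi_1,\Phi_2,C_1,\dots,C_{n-2})$ onto $W^{\prime}$; the functions $P_1,P_2,R_{i,j}\in\mathcal{C}^{\omega}(W^{\prime},\mathbb{R})$ are the arbitrary data; and, for $c_1,\dots,c_{n-2}>0$, Hypothesis (H) has been verified in the paragraphs above, with the continuous family of periodic orbits $\gamma_h^{\varepsilon_3,\dots,\varepsilon_n}$, $h\in(0,\tfrac12\rho_{c_1,\dots,c_{n-2}}^2)$, so that here $(a_{c_1,\dots,c_{n-2}},b_{c_1,\dots,c_{n-2}})=(0,\tfrac12\rho_{c_1,\dots,c_{n-2}}^2)$.

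Next I would carry out the one point that genuinely needs checking in order to invoke the orbital-equivalence step used in the proof of Theorem \eqref{MainThm1}, namely that the rescaling factor
$$\nu_{\Phi_{\varepsilon_3,\dots,\varepsilon_n}}(y_1,\dots,y_n)=\varepsilon_3\cdots\varepsilon_n\sqrt{(2y_3-k_1^2y_1^2)\cdots(2y_n-k_{n-2}^2y_1^2)}$$
is nowhere zero and of constant sign on the leaf. This is immediate: $W^{\prime}$ is by construction the locus $\{k_1^2y_1^2<2y_3,\dots,k_{n-2}^2y_1^2<2y_n\}$, hence every factor under the radical is strictly positive on $W^{\prime}$, and the sign of $\nu_{\Phi_{\varepsilon_3,\dots,\varepsilon_n}}$ equals the constant $\varepsilon_3\cdots\varepsilon_n$. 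Consequently the restriction \eqref{erte} of $(\Phi_{\varepsilon_3,\dots,\varepsilon_n})_{\star}X_{\varepsilon}$ to the regular symplectic leaf $\Phi_{\varepsilon_3,\dots,\varepsilon_n}(\Sigma_{c_1,\dots,c_{n-2}}\cap\Omega^{\prime}_{\varepsilon_3,\dots,\varepsilon_n})$ is orbitally equivalent, via the reparameterization $\mathrm{d}s=\nu_{\Phi_{\varepsilon_3,\dots,\varepsilon_n}}\,\mathrm{d}t$, to the planar perturbed Hamiltonian field $\mathbb{X}_{\varepsilon}=X_{\Phi_{\star}H}+\varepsilon\big(P_1\tfrac{\partial}{\partial y_1}+P_2\tfrac{\partial}{\partial y_2}\big)$ with $\Phi_{\star}H=\tfrac12(y_1^2+y_2^2)$, whose unperturbed periodic orbits are exactly the circles $\Phi_{\varepsilon_3,\dots,\varepsilon_n}(\gamma_h^{\varepsilon_3,\dots,\varepsilon_n})=\{y_1^2+y_2^2=2h\}$.

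I would then identify the Poincar\'e--Pontryagin integral. Comparing the definition of $I_{c_1,\dots,c_{n-2}}(h)$ stated just before the theorem with the integral $I$ of Theorem \eqref{PPT} applied to $\mathbb{X}_{\varepsilon}$ along $\gamma_h=\{y_1^2+y_2^2=2h\}$, the two coincide; in particular $I_{c_1,\dots,c_{n-2}}$ does not depend on the choice of signs $\varepsilon_3,\dots,\varepsilon_n$, since neither the circle nor the restricted functions $P_1(y_1,y_2,c_1,\dots,c_{n-2})$, $P_2(y_1,y_2,c_1,\dots,c_{n-2})$ involve them. Feeding the hypothesis $I_{c_1,\dots,c_{n-2}}\not\equiv0$ on $(0,\tfrac12\rho_{c_1,\dots,c_{n-2}}^2)$ into Theorem \eqref{MainThm1} (equivalently Theorem \eqref{teorfol}) yields the four assertions for $\mathbb{X}_{\varepsilon}$, hence for $(\Phi_{\varepsilon_3,\dots,\varepsilon_n})_{\star}X_{\varepsilon}$; transporting back by $\Phi_{\varepsilon_3,\dots,\varepsilon_n}^{-1}$ and using the one-to-one correspondence of orbits on $\Sigma_{c_1,\dots,c_{n-2}}\cap\Omega^{\prime}_{\varepsilon_3,\dots,\varepsilon_n}$ gives the statements for the perturbed system \eqref{hyperpert1}, uniformly in $\varepsilon_3,\dots,\varepsilon_n\in\{-1,1\}$.

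There is no genuine obstacle: once the concrete $\Phi_{\varepsilon_3,\dots,\varepsilon_n}$, $\nu_{\Phi_{\varepsilon_3,\dots,\varepsilon_n}}$, and the family $\gamma_h^{\varepsilon_3,\dots,\varepsilon_n}$ are substituted, the theorem is a transcription of Theorem \eqref{MainThm1}. The only step deserving care is the non-vanishing and constant sign of $\nu_{\Phi_{\varepsilon_3,\dots,\varepsilon_n}}$ on $W^{\prime}$, which legitimizes the time reparameterization underlying the orbital equivalence; everything else is bookkeeping already done in the computations preceding the statement.
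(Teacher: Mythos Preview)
Your proposal is correct and follows exactly the approach the paper intends: the paper presents Theorem \eqref{mainThmExample1} explicitly as ``the Theorem \eqref{MainThm1} in the case of the system \eqref{hyperpert1}'' and gives no separate proof, relying on the preceding computations (the Darboux chart $\Phi_{\varepsilon_3,\dots,\varepsilon_n}$, verification of Hypothesis~(H), the reduced system \eqref{erte}, and the definition of $I_{c_1,\dots,c_{n-2}}$) to place the reader squarely in the hypotheses of Theorem \eqref{MainThm1}. Your explicit check that $\nu_{\Phi_{\varepsilon_3,\dots,\varepsilon_n}}$ is nonvanishing and of constant sign on $W^{\prime}$, and your observation that $I_{c_1,\dots,c_{n-2}}$ is independent of the signs $\varepsilon_3,\dots,\varepsilon_n$, are welcome clarifications that the paper leaves implicit.
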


\subsection{Perturbations keeping invariant the regular part of the symplectic foliation}
\medskip

The aim of this subsection is to give an answer to the limit cycles bifurcation problem, in the case of an analytic perturbation of the system \eqref{hyperelli} which keeps dynamically invariant the regular part of the symplectic foliation of the of the Poisson manifold $(\Omega^{\prime}_{\varepsilon_3,\dots,\varepsilon_n},\{\cdot,\cdot\}_{\nu;C_1,\dots,C_{n-2}})$.

Using the Proposition \eqref{AformulaB}, some straightforward computations lead to the following general form of the perturbation vector filed which keeps invariant the regular part of the symplectic foliation of the of $(\Omega^{\prime}_{\varepsilon_3,\dots,\varepsilon_n},\{\cdot,\cdot\}_{\nu;C_1,\dots,C_{n-2}})$:
\begin{align*}
A |_{\Omega^{\prime}_{\varepsilon_3,\dots,\varepsilon_n}}&=\hat{x}_{1}\hat{x}_{2}x_3\dots x_n \cdot Q_1(x_1,x_2,\dfrac{1}{2}(k_1^2 x_1^2+x_3^2),\dots,\dfrac{1}{2}(k_{n-2}^2 x_1^2+x_n^2))\cdot \dfrac{\partial}{\partial x_1}\\
&+ \hat{x}_{1}\hat{x}_{2}x_3\dots x_n \cdot Q_2(x_1,x_2,\dfrac{1}{2}(k_1^2 x_1^2+x_3^2),\dots,\dfrac{1}{2}(k_{n-2}^2 x_1^2+x_n^2))\cdot \dfrac{\partial}{\partial x_2}\\
&-k_1^2 x_1\hat{x}_{2}\hat{x}_3\dots x_n \cdot Q_1(x_1,x_2,\dfrac{1}{2}(k_1^2 x_1^2+x_3^2),\dots,\dfrac{1}{2}(k_{n-2}^2 x_1^2+x_n^2)) \cdot \dfrac{\partial}{\partial x_3}\\
&-\dots -\\
& -k_{n-2}^2 x_1\hat{x}_{2}x_3\dots \hat{x}_n \cdot Q_1(x_1,x_2,\dfrac{1}{2}(k_1^2 x_1^2+x_3^2),\dots,\dfrac{1}{2}(k_{n-2}^2 x_1^2+x_n^2)) \cdot \dfrac{\partial}{\partial x_n},
\end{align*}
where $Q_1,Q_2\in\mathcal{C}^{\omega}(W^{\prime},\mathbb{R})$, are arbitrary real analytic functions defined on 
$$
W^{\prime}=\Phi_{\varepsilon_3,\dots,\varepsilon_n}(\Omega^{\prime}_{\varepsilon_3,\dots,\varepsilon_n})=\{(y_1,\dots,y_n)\in\mathbb{R}^n \mid k_1^2 y_1^2<2 y_3,\dots, k_{n-2}^2 y_1^2<2 y_n\}.
$$

Consequently, the general analytic perturbation of the system \eqref{hyperelli} which keeps dynamically invariant the regular part of the symplectic foliation of the of the Poisson manifold $(\Omega^{\prime}_{\varepsilon_3,\dots,\varepsilon_n},\{\cdot,\cdot\}_{\nu;C_1,\dots,C_{n-2}})$, is given by:

\begin{align}\label{hyperpert2}
\left\{\begin{array}{l}
\dot x_{1}= \hat{x}_{1}x_2 \dots x_n + \varepsilon \cdot \hat{x}_{1}\hat{x}_{2}x_3\dots x_n \cdot Q_1(x_1,x_2,\dfrac{1}{2}(k_1^2 x_1^2+x_3^2),\dots,\dfrac{1}{2}(k_{n-2}^2 x_1^2+x_n^2))\\
\dot x_{2}= -x_1 \hat{x}_{2} \dots x_n + \varepsilon \cdot \hat{x}_{1}\hat{x}_{2}x_3\dots x_n \cdot Q_2(x_1,x_2,\dfrac{1}{2}(k_1^2 x_1^2+x_3^2),\dots,\dfrac{1}{2}(k_{n-2}^2 x_1^2+x_n^2))\\
\dot x_{3}= -k_{1}^{2} x_1 x_2 \hat{x}_3 \dots x_n - \varepsilon \cdot k_1^2 x_1\hat{x}_{2}\hat{x}_3\dots x_n \cdot Q_1(x_1,x_2,\dfrac{1}{2}(k_1^2 x_1^2+x_3^2),\dots,\dfrac{1}{2}(k_{n-2}^2 x_1^2+x_n^2))\\
\cdots\\
\dot x_{n}= -k_{n-2}^{2} x_1 x_2 x_3 \dots \hat{x}_n - \varepsilon \cdot k_{n-2}^2 x_1\hat{x}_{2}x_3\dots \hat{x}_n \cdot Q_1(x_1,x_2,\dfrac{1}{2}(k_1^2 x_1^2+x_3^2),\dots,\dfrac{1}{2}(k_{n-2}^2 x_1^2+x_n^2)),\\
\end{array}\right.
\end{align}
where $Q_1,Q_2 \in\mathcal{C}^{\omega}(W^{\prime},\mathbb{R})$, are arbitrary real analytic functions.

Using the analytic diffeomorphism $\Phi_{\varepsilon_3,\dots,\varepsilon_n}:\Omega^{\prime}_{\varepsilon_3,\dots,\varepsilon_n}\longrightarrow W^{\prime}$,
$$
\Phi_{\varepsilon_3,\dots,\varepsilon_n}(x_1,\dots,x_n)=\left(x_1,x_2,\dfrac{1}{2}(k_1^2 x_1^2+x_3^2),\dots,\dfrac{1}{2}(k_{n-2}^2 x_1^2+x_n^2)\right),
$$
the perturbed system \eqref{hyperpert2} written in coordinates $(y_1,\dots,y_n)\in W^{\prime}$, becomes
\begin{align}\label{hyperpert2red}
\left\{\begin{array}{l}
\dot y_{1}= \varepsilon_3 \dots \varepsilon_n \sqrt{(2 y_3 -k_1^2 y_1^2)\dots (2 y_n -k_{n-2}^2 y_1^2)}\cdot (y_2 + \varepsilon Q_1(y_1,\dots, y_n))\\
\dot y_{2}= \varepsilon_3 \dots \varepsilon_n \sqrt{(2 y_3 -k_1^2 y_1^2)\dots (2 y_n -k_{n-2}^2 y_1^2)}\cdot (-y_1 + \varepsilon Q_2(y_1,\dots, y_n))\\
\dot y_{3}= 0\\
\cdots\\
\dot y_{n}= 0.\\
\end{array}\right.
\end{align}
Moreover, since $$\Phi_{\varepsilon_3,\dots,\varepsilon_n}:(\Omega^{\prime}_{\varepsilon_3,\dots,\varepsilon_n},\{\cdot,\cdot\}_{\nu;C_1,\dots,C_{n-2}})\longrightarrow (W^{\prime},\{\cdot,\cdot\}_{\nu_{\Phi_{\varepsilon_3,\dots,\varepsilon_n}};(\Phi_{\varepsilon_3,\dots,\varepsilon_n})_{\star}C_1,\dots,(\Phi_{\varepsilon_3,\dots,\varepsilon_n})_{\star}C_{n-2}})$$ is a Poisson isomorphism, then \textbf{for each} $c_1,\dots,c_{n-2}>0$, the dynamically invariant set of \eqref{hyperpert2red},
\begin{align*}
&\Phi_{\varepsilon_3,\dots,\varepsilon_n}(\Sigma_{c_1,\dots,c_{n-2}}\cap\Omega^{\prime}_{\varepsilon_3,\dots,\varepsilon_n})=\{(y_1,y_2,c_1,\dots,c_{n-2})\in\mathbb{R}^{n} \mid - \rho_{c_1,\dots,c_{n-2}}<y_1 < \rho_{c_1,\dots,c_{n-2}}\},
\end{align*}
is a regular symplectic leaf of the Poisson manifold $$(W^{\prime},\{\cdot,\cdot\}_{\nu_{\Phi_{\varepsilon_3,\dots,\varepsilon_n}};(\Phi_{\varepsilon_3,\dots,\varepsilon_n})_{\star}C_1,\dots,(\Phi_{\varepsilon_3,\dots,\varepsilon_n})_{\star}C_{n-2}}).$$

The restriction of \eqref{hyperpert2red} to the dynamically invariant set $\Phi(\Sigma_{c_1,\dots,c_{n-2}}\cap\Omega^{\prime}_{\varepsilon_3,\dots,\varepsilon_n})$ becomes 
\begin{equation}\label{erte2}
\left\{\begin{array}{l}
\dot y_{1}= \varepsilon_3 \dots \varepsilon_n \sqrt{(2 c_1 -k_1^2 y_1^2)\dots (2 c_{n-2} -k_{n-2}^2 y_1^2)}\cdot (y_2 + \varepsilon Q_1(y_1,y_2,c_1,\dots, c_{n-2}))\\
\dot y_{2}= \varepsilon_3 \dots \varepsilon_n \sqrt{(2 c_1 -k_1^2 y_1^2)\dots (2 c_{n-2} -k_{n-2}^2 y_1^2)}\cdot (-y_1 + \varepsilon Q_2(y_1,y_2,c_1,\dots, c_{n-2}))\\
y_{3}= c_1\\
\cdots\\
y_{n}= c_{n-2}.\\
\end{array}\right.
\end{equation}

Let us define for each $h\in(0,\dfrac{1}{2}\rho_{c_1,\dots,c_{n-2}}^2)$
$$
J_{c_1,\dots,c_{n-2}}(h):=\oint_{\{(y_1,y_2) | y_1^2+y_2^2=2h\}}-Q_{1}(y_1,y_2,c_1,\dots,c_{n-2})\mathrm{d}y_2 + Q_{2}(y_1,y_2,c_1,\dots,c_{n-2})\mathrm{d}y_1.
$$

Note that $J_{c_1,\dots,c_{n-2}}$ has the same expression for each dynamically invariant set\\
$\Phi(\Sigma_{c_1,\dots,c_{n-2}}\cap\Omega^{\prime}_{\varepsilon_3,\dots,\varepsilon_n})$, independently of the choice of $\varepsilon_3,\dots,\varepsilon_n\in\{-1,1\}$. 

Let us state now the main result of this subsection, namely the Theorem \eqref{MainThm2} in the case of the system \eqref{hyperpert2}.

\begin{theorem}\label{mainThmExample2}
Suppose there exist $c_1^0,\dots,c_{n-2}^0>0$ such that the map $J_{c_1^0,\dots,c_{n-2}^0}=J_{c_1^0,\dots,c_{n-2}^0}(h)$, is not identically zero on $(0,\dfrac{1}{2}\rho_{c_1^0,\dots,c_{n-2}^0}^2)$. Then the following statements hold true for every $\varepsilon_3,\dots,\varepsilon_n\in\{-1,1\}$, and respectively for every $c_1,\dots,c_{n-2}>0$ such that the associated map, $J_{c_1\dots,c_{n-2}}=J_{c_1\dots,c_{n-2}}(h)$, is not identically zero on $(0,\dfrac{1}{2}\rho_{c_1,\dots,c_{n-2}}^2)$.
\begin{itemize}
\item If the perturbed system \eqref{hyperpert2} has a limit cycle on the symplectic leaf $\Sigma_{c_1 ,\dots, c_{n-2}}\cap\Omega^{\prime}_{\varepsilon_3,\dots,\varepsilon_n}$, bifurcating from the periodic orbit $\gamma_{h^{\star}}^{\varepsilon_3,\dots,\varepsilon_n}$ of the completely integrable system \eqref{hyperelli}, then $J_{c_1,\dots,c_{n-2}}(h^{\star})=0$.
\item If there exists $h^{\star}\in (0,\dfrac{1}{2}\rho_{c_1,\dots,c_{n-2}}^2)$ a simple zero of $J_{c_1,\dots,c_{n-2}}$, then the perturbed system \eqref{hyperpert2} has a unique limit cycle on the symplectic leaf $\Sigma_{c_1 ,\dots, c_{n-2}}\cap\Omega^{\prime}_{\varepsilon_3,\dots,\varepsilon_n}$, bifurcating from the periodic orbit $\gamma_{h^{\star}}^{\varepsilon_3,\dots,\varepsilon_n}$ of the completely integrable system \eqref{hyperelli}, and moreover, this limit cycle is hyperbolic.
\item If there exists $h^{\star}\in (0,\dfrac{1}{2}\rho_{c_1,\dots,c_{n-2}}^2)$, a zero of order $k$ of $J_{c_1,\dots,c_{n-2}}$, then the perturbed system \eqref{hyperpert2} has at most $k$ limit cycles (counting also the multiplicities) on the symplectic leaf $\Sigma_{c_1 ,\dots, c_{n-2}}\cap\Omega^{\prime}_{\varepsilon_3,\dots,\varepsilon_n}$, bifurcating from the periodic orbit $\gamma_{h^{\star}}^{\varepsilon_3,\dots,\varepsilon_n}$ of the completely integrable \eqref{hyperelli}.
\item The total number (counting also the multiplicities) of limit cycles of the perturbed system \eqref{hyperpert2}, on the symplectic leaf $\Sigma_{c_1 ,\dots, c_{n-2}}\cap\Omega^{\prime}_{\varepsilon_3,\dots,\varepsilon_n}$, bifurcating from the family $\bigcup_{h\in (0,1/2\rho_{c_1,\dots,c_{n-2}}^2)}\gamma_h^{\varepsilon_3,\dots,\varepsilon_n}$ of periodic orbits of the completely integrable \eqref{hyperelli}, is bounded by the maximum number (if finite) of isolated zeros (counting also the multiplicities) of $J_{c_1,\dots,c_{n-2}}(h)$ for $h\in (0,\dfrac{1}{2}\rho_{c_1,\dots,c_{n-2}}^2)$.
\end{itemize}
\end{theorem}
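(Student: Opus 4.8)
The plan is to deduce this statement as a direct specialization of Theorem \eqref{MainThm2}, so the work consists mostly of matching the concrete data of the system \eqref{hyperpert2} with the abstract hypotheses of that theorem. First I would recall that Hypothesis (H) has already been verified above on every regular symplectic leaf $\Sigma_{c_1,\dots,c_{n-2}}\cap\Omega^{\prime}_{\varepsilon_3,\dots,\varepsilon_n}$ with $c_1,\dots,c_{n-2}>0$: the continuous family of circles $\gamma_h^{\varepsilon_3,\dots,\varepsilon_n}$ in \eqref{circles} (whose image under $\Phi_{\varepsilon_3,\dots,\varepsilon_n}$ is the family \eqref{circlesphi}) is a family of periodic orbits of \eqref{hyperelli} parameterized by $h\in(0,\tfrac12\rho_{c_1,\dots,c_{n-2}}^2)$, so here $(a_{c_1,\dots,c_{n-2}},b_{c_1,\dots,c_{n-2}})=(0,\tfrac12\rho_{c_1,\dots,c_{n-2}}^2)$. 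Consequently one may take as the open neighborhood $U\subset\mathbb{R}^{n-2}$ appearing in Theorem \eqref{MainThm2} the whole positive orthant $\{(c_1,\dots,c_{n-2}):c_i>0\}$, on which Hypothesis (H) is valid; in particular it contains any prescribed $(c_1^0,\dots,c_{n-2}^0)$ with positive entries.

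Next I would observe that the perturbed system \eqref{hyperpert2} is precisely $X_\varepsilon=X+\varepsilon A$ with $A|_{\Omega^{\prime}_{\varepsilon_3,\dots,\varepsilon_n}}$ given by the formula of Proposition \eqref{AformulaB} for the data $\Phi_1=x_1$, $\Phi_2=x_2$, $C_j=\tfrac12(k_j^2x_1^2+x_{j+2}^2)$ and arbitrary $Q_1,Q_2\in\mathcal{C}^\omega(W^{\prime},\mathbb{R})$ --- this is the straightforward computation already displayed immediately before \eqref{hyperpert2}. Hence Theorem \eqref{nfcisp1} applies and yields that $(\Phi_{\varepsilon_3,\dots,\varepsilon_n})_\star X_\varepsilon$ is the system \eqref{hyperpert2red}, whose restriction to the invariant leaf $\Phi_{\varepsilon_3,\dots,\varepsilon_n}(\Sigma_{c_1,\dots,c_{n-2}}\cap\Omega^{\prime}_{\varepsilon_3,\dots,\varepsilon_n})$ is \eqref{erte2}. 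On that leaf the scalar factor $\nu_{\Phi_{\varepsilon_3,\dots,\varepsilon_n}}=\varepsilon_3\cdots\varepsilon_n\sqrt{(2c_1-k_1^2y_1^2)\cdots(2c_{n-2}-k_{n-2}^2y_1^2)}$ is nowhere zero and of constant sign, because $-\rho_{c_1,\dots,c_{n-2}}<y_1<\rho_{c_1,\dots,c_{n-2}}$ forces each factor under the root to be strictly positive; therefore the time reparameterization $\mathrm{d}s=\nu_{\Phi_{\varepsilon_3,\dots,\varepsilon_n}}\mathrm{d}t$ is a legitimate orbital equivalence carrying \eqref{erte2} to the planar Hamiltonian perturbation of type \eqref{phs} with Hamiltonian $\tfrac12(y_1^2+y_2^2)$ and perturbation $(f,g)=(Q_1,Q_2)$. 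Since $\Phi_{\varepsilon_3,\dots,\varepsilon_n}(\gamma_h^{\varepsilon_3,\dots,\varepsilon_n})=\{(y_1,y_2):y_1^2+y_2^2=2h\}$ is exactly the level curve $\{H=h\}$, the Poincar\'e--Pontryagin integral associated with this planar system is precisely $J_{c_1,\dots,c_{n-2}}(h)$ as defined before the statement. This also makes transparent the claimed independence of the sign parameters $\varepsilon_3,\dots,\varepsilon_n$: the integrand of $J_{c_1,\dots,c_{n-2}}$ involves no signs, and orbital equivalence maps limit cycles to limit cycles, so the conclusion for a generic sign choice propagates to all of them.

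Finally I would invoke Theorem \eqref{MainThm2} with this $U$ and the prescribed $(c_1^0,\dots,c_{n-2}^0)$: for every $(c_1,\dots,c_{n-2})\in U$ with $J_{c_1,\dots,c_{n-2}}$ not identically zero on $(0,\tfrac12\rho_{c_1,\dots,c_{n-2}}^2)$, the four bulleted assertions of Theorem \eqref{mainThmExample2} are nothing but the four bulleted assertions of Theorem \eqref{MainThm2}, translated back through the Poisson isomorphism $\Phi_{\varepsilon_3,\dots,\varepsilon_n}$, which sets up a one-to-one correspondence between the orbits of $X_\varepsilon$ on $\Sigma_{c_1,\dots,c_{n-2}}\cap\Omega^{\prime}_{\varepsilon_3,\dots,\varepsilon_n}$ and those of $(\Phi_{\varepsilon_3,\dots,\varepsilon_n})_\star X_\varepsilon$ on the image leaf, and takes $\gamma_{h^\star}^{\varepsilon_3,\dots,\varepsilon_n}$ to the circle of squared radius $2h^\star$. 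I do not expect a genuine obstacle here; the only point demanding some care is the sign-definiteness of $\nu_{\Phi_{\varepsilon_3,\dots,\varepsilon_n}}$ on each leaf, which licenses the reduction to the planar picture of Section~3, and after that the argument is purely a transcription of Theorem \eqref{MainThm2} to the present concrete data.
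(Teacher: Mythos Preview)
Your proposal is correct and follows exactly the approach the paper intends: the paper presents Theorem \eqref{mainThmExample2} simply as ``Theorem \eqref{MainThm2} in the case of the system \eqref{hyperpert2}'' without a separate proof, so your plan of matching the concrete data (the Darboux chart $\Phi_{\varepsilon_3,\dots,\varepsilon_n}$, the verification of Hypothesis (H) on every leaf with $c_i>0$, the sign-definiteness of $\nu_{\Phi}$, and the identification of $J_{c_1,\dots,c_{n-2}}$) to the abstract hypotheses of Theorem \eqref{MainThm2} is precisely what is required. If anything, your write-up is more explicit than the paper's own treatment.
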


\subsection{An example of sharp upper bound estimate for the number of leafwise bifurcating limit cycles}
\medskip
In this subsection we consider a special class of perturbations, such that the analytic functions $P_1,P_2$ from \eqref{hyperpert1}, and respectively, $Q_1,Q_2$ from \eqref{hyperpert2}, are given by
\begin{equation}\label{poly}
\begin{aligned}
P_1(y_1,\dots,y_n)=Q_1(y_1,\dots,y_n):=\sum_{0\leq i+j \leq m}r_{i,j}(y_3,\dots, y_n)y_1^i y_2^j,\\
P_2(y_1,\dots,y_n)=Q_2(y_1,\dots,y_n):=\sum_{0\leq i+j \leq m}s_{i,j}(y_3,\dots, y_n)y_1^i y_2^j,
\end{aligned}
\end{equation}
where $m\in\mathbb{N}$, $m\geq 1$, and $r_{i,j},s_{i,j}$ are real analytic functions on $\mathbb{R}^{n-2}$, for each $i,j\in\mathbb{N}$, with $0\leq i+j \leq m$. Suppose that both functions, $P_1=Q_1$, $P_2=Q_2$, are polynomial of degree $m$ in the first two coordinates.

Since the aim of this subsection is to provide a sharp upper bound estimate for the number of leafwise bifurcating limit cycles of the perturbed dynamical system \eqref{hyperpert1}, (and respectively \eqref{hyperpert2}), our approach is to apply the last item of Theorem \eqref{mainThmExample1} (and respectively Theorem \eqref{mainThmExample2}). In order to do that, note that for each $c_1,\dots,c_{n-2}>0$, the mappings $I_{c_1,\dots,c_{n-2}}$, $J_{c_1,\dots,c_{n-2}}$, are equal on $(0,1/2 \rho_{c_1,\dots,c_{n-2}}^2)$, where $\rho_{c_1,\dots,c_{n-2}}:=\min_{i=\overline{1,{n-2}}} \sqrt{2c_i} /{k_i}$. More precisely, for every $h\in (0,1/2 \rho_{c_1,\dots,c_{n-2}}^2)$ we have that
\begin{align*}
I_{c_1,\dots,c_{n-2}}(h)&=J_{c_1,\dots,c_{n-2}}(h)\\
&=\oint_{\{(y_1,y_2) | y_1^2+y_2^2=2h\}}-Q_{1}(y_1,y_2,c_1,\dots,c_{n-2})\mathrm{d}y_2 + Q_{2}(y_1,y_2,c_1,\dots,c_{n-2})\mathrm{d}y_1.
\end{align*}
Using the standard parameterization, $y_1=\sqrt{2h}\cos\theta$, $y_2=\sqrt{2h}\sin\theta$, $\theta\in[0,2\pi)$, after some straightforward computations, the integral $I_{c_1,\dots,c_{n-2}}(h)$ becomes
\begin{align*}
I_{c_1,\dots,c_{n-2}}(h)=\sqrt{h}\sum_{0\leq i+j \leq m}[\tilde{r}_{i,j}(c_1,\dots,c_{n-2})T_{i+1,j}+\tilde{s}_{i,j}(c_1,\dots,c_{n-2})T_{i,j+1}](\sqrt{h})^{i+j},
\end{align*}
where, for every $0\leq i+j \leq m$, 
\begin{align*}
\tilde{r}_{i,j}(c_1,\dots,c_{n-2}):&=-(\sqrt{2})^{i+j+1} r_{i,j}(c_1,\dots,c_{n-2}),\\
\tilde{s}_{i,j}(c_1,\dots,c_{n-2}):&=-(\sqrt{2})^{i+j+1} s_{i,j}(c_1,\dots,c_{n-2}),\\
T_{i,j}:&=\int_{0}^{2\pi}\cos^i \theta \sin^j \theta \ \mathrm{d}\theta 
=\left\{\begin{array}{l}
\dfrac{2\Gamma((i+1)/2)\Gamma((j+1)/2)}{\Gamma((i+j+2)/2)}, \text{if} \ i,j\in 2\mathbb{N}\\
0, \qquad \qquad \qquad \qquad  \qquad  \quad \text{otherwise}.\\
\end{array}\right.
\end{align*}
Taking into account that $T_{i,j}=0$ if $i\in 2\mathbb{N}+1$ or $j\in 2\mathbb{N}+1$, we obtain 
$$
\tilde{r}_{i,j}(c_1,\dots,c_{n-2})T_{i+1,j}+\tilde{s}_{i,j}(c_1,\dots,c_{n-2})T_{i,j+1}=0,
$$
for every $i,j\in\mathbb{N}$, $0\leq i+j \leq m$, such that $i+j \in 2\mathbb{N}$. Hence, the expression of the integral $I_{c_1,\dots,c_{n-2}}(h)$ becomes
\begin{align*}
I_{c_1,\dots,c_{n-2}}(h)&=\sqrt{h}\sum_{0\leq i+j \leq m,\ i+j \in 2\mathbb{N}+1}[\tilde{r}_{i,j}(c_1,\dots,c_{n-2})T_{i+1,j}+\tilde{s}_{i,j}(c_1,\dots,c_{n-2})T_{i,j+1}](\sqrt{h})^{i+j}\\
&=\left\{\begin{array}{l}
h\left( a_1 + a_3 h + \dots + a_m h^{\frac{m-1}{2}}\right),  \quad \text{if} \ m\in 2\mathbb{N}+1\\
h\left( a_1 + a_3 h + \dots + a_{m-1} h^{\frac{m-2}{2}}\right),  \ \text{if} \ m\in 2\mathbb{N}\\
\end{array}\right.,
\end{align*}
where 
\begin{align*}
a_1:&=[\tilde{r}_{1,0}(c_1,\dots,c_{n-2})+\tilde{s}_{0,1}(c_1,\dots,c_{n-2})]\pi,\\
a_3:&= \sum_{i+j=3} \tilde{r}_{i,j}(c_1,\dots,c_{n-2})T_{i+1,j}+\tilde{s}_{i,j}(c_1,\dots,c_{n-2})T_{i,j+1},\\
&\dots \\
a_{m}:&= \sum_{i+j=m} \tilde{r}_{i,j}(c_1,\dots,c_{n-2})T_{i+1,j}+\tilde{s}_{i,j}(c_1,\dots,c_{n-2})T_{i,j+1}.
\end{align*}
Consequently, $I_{c_1,\dots,c_{n-2}}(h)$ is a polynomial function, and hence its maximum number of zeros located in $(0,1/2 \rho_{c_1,\dots,c_{n-2}}^2)$, is equal to 
\begin{align}\label{estimate}
\left\{\begin{array}{l}
\dfrac{m-1}{2},  \ \text{if} \ m\in 2\mathbb{N}+1\\
\dfrac{m-2}{2},  \ \text{if} \ m\in 2\mathbb{N}.\\
\end{array}\right.
\end{align}
Note that, the above maximum values for the number of zeros of $I_{c_1,\dots,c_{n-2}}(h)$ can be achieved for certain $Q_1$ and $Q_2$ of the type \eqref{poly} such that the constants $\tilde{r}_{i,j}(c_1,\dots,c_{n-2})$, and $\tilde{s}_{i,j}(c_1,\dots,c_{n-2})$ are carefully chosen in accordance with $a_i$'s.

Now one can give the main results of this section, which provide a sharp upper bound estimate for the number of leafwise bifurcating limit cycles of the perturbed systems \eqref{hyperpert1} and \eqref{hyperpert2}, if the perturbation vector field is generated by analytic functions of the type \eqref{poly}. 

Let us state now the first result, concerning the perturbed system \eqref{hyperpert1}, which keeps dynamically invariant an a-priori fixed regular symplectic leaf, $\Sigma_{c_1 ,\dots, c_{n-2}}\cap\Omega^{\prime}_{\varepsilon_3,\dots,\varepsilon_n}$, of the Poisson manifold $(\Omega^{\prime}_{\varepsilon_3,\dots,\varepsilon_n},\{\cdot,\cdot\}_{\nu;C_1,\dots,C_{n-2}})$.

\begin{theorem}\label{mainThmExampleConcret1}
If $I_{c_1,\dots,c_{n-2}}(h)\not\equiv 0$ on $(0,\dfrac{1}{2}\rho_{c_1,\dots,c_{n-2}}^2)$, then for each choice of $\varepsilon_3,\dots,\varepsilon_n\in\{-1,1\}$, the total number (counting also the multiplicities) of limit cycles of the perturbed system \eqref{hyperpert1} with $P_1$ and $P_2$ of the type \eqref{poly}, located on the symplectic leaf $\Sigma_{c_1 ,\dots, c_{n-2}}\cap\Omega^{\prime}_{\varepsilon_3,\dots,\varepsilon_n}$, and bifurcating from the family $\bigcup_{h\in (0,1/2\rho_{c_1,\dots,c_{n-2}}^2)}\gamma_h^{\varepsilon_3,\dots,\varepsilon_n}$ \eqref{circles} of periodic orbits of the completely integrable \eqref{hyperelli}, is smaller than or equal to 
\begin{align*}
\left\{\begin{array}{l}
\dfrac{m-1}{2},  \ \text{if} \ m\in 2\mathbb{N}+1\\
\dfrac{m-2}{2},  \ \text{if} \ m\in 2\mathbb{N}.\\
\end{array}\right.
\end{align*}
\end{theorem}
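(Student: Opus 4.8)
The plan is to obtain the bound as a direct consequence of the last item of Theorem \ref{mainThmExample1}, combined with the explicit polynomial expression for the Melnikov-type integral $I_{c_1,\dots,c_{n-2}}(h)$ that was computed in the paragraphs preceding the statement. First I would recall that the perturbed system \eqref{hyperpert1} with $P_1,P_2$ of the special type \eqref{poly} is exactly an instance of the situation handled in Theorem \ref{mainThmExample1}: the vector field $A|_{\Omega^{\prime}_{\varepsilon_3,\dots,\varepsilon_n}}$ is the one produced by Proposition \eqref{Aformula}, Hypothesis (H) holds on every regular symplectic leaf $\Sigma_{c_1,\dots,c_{n-2}}\cap\Omega^{\prime}_{\varepsilon_3,\dots,\varepsilon_n}$ with $c_1,\dots,c_{n-2}>0$ (this was verified while exhibiting the family \eqref{circles}), and by hypothesis $I_{c_1,\dots,c_{n-2}}\not\equiv 0$ on $(0,\tfrac12\rho^2_{c_1,\dots,c_{n-2}})$. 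Hence the last bullet of Theorem \eqref{mainThmExample1} applies, and the total number (with multiplicities) of limit cycles of \eqref{hyperpert1} located on $\Sigma_{c_1,\dots,c_{n-2}}\cap\Omega^{\prime}_{\varepsilon_3,\dots,\varepsilon_n}$ and bifurcating from the annulus $\bigcup_h\gamma_h^{\varepsilon_3,\dots,\varepsilon_n}$ is bounded by the number of isolated zeros, counted with multiplicity, of $I_{c_1,\dots,c_{n-2}}(h)$ in that interval.

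Next I would invoke the explicit evaluation carried out just above the theorem: using the parameterization $y_1=\sqrt{2h}\cos\theta$, $y_2=\sqrt{2h}\sin\theta$ of the circle $\gamma_h$, and the fact that the trigonometric moments $T_{i,j}=\int_0^{2\pi}\cos^i\theta\sin^j\theta\,\mathrm d\theta$ vanish unless $i$ and $j$ are both even, one gets that only the monomials of $P_1,P_2$ with $i+j$ odd contribute, so that
\[
I_{c_1,\dots,c_{n-2}}(h)=
\begin{cases}
h\bigl(a_1+a_3h+\dots+a_m h^{\frac{m-1}{2}}\bigr), & m\in 2\mathbb N+1,\\[2pt]
h\bigl(a_1+a_3h+\dots+a_{m-1}h^{\frac{m-2}{2}}\bigr), & m\in 2\mathbb N,
\end{cases}
\]
with the coefficients $a_\ell$ as displayed in the text. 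Thus $I_{c_1,\dots,c_{n-2}}(h)=h\cdot p(h)$ where $p$ is a (genuine) polynomial in $h$ of degree $\tfrac{m-1}{2}$ if $m$ is odd and $\tfrac{m-2}{2}$ if $m$ is even.

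Finally I would close the argument: since $I_{c_1,\dots,c_{n-2}}\not\equiv 0$, the polynomial $p$ is not identically zero, so $p$ has at most $\deg p$ zeros in $\R$, each of them isolated and of finite multiplicity; the extra factor $h$ only contributes the root $h=0$, which does not lie in the open interval $(0,\tfrac12\rho^2_{c_1,\dots,c_{n-2}})$. Hence the number of isolated zeros of $I_{c_1,\dots,c_{n-2}}(h)$ in $(0,\tfrac12\rho^2_{c_1,\dots,c_{n-2}})$, counted with multiplicity, is at most $\tfrac{m-1}{2}$ (resp. $\tfrac{m-2}{2}$), and the stated bound follows. The only mildly delicate point is the degree bookkeeping in the evaluation of $I_{c_1,\dots,c_{n-2}}(h)$ — in particular recognizing that the $\sqrt h$ factors recombine into integer powers of $h$ precisely because $T_{i,j}=0$ whenever $i+j$ is even — but this is already settled in the computation preceding the theorem, so the proof here is essentially a citation of Theorem \eqref{mainThmExample1} together with a one-line remark that a nonzero polynomial has finitely many zeros bounded by its degree.
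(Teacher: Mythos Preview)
Your proposal is correct and follows exactly the same approach as the paper's proof, which simply cites Theorem \ref{mainThmExample1} together with the zero-count estimate \eqref{estimate} derived in the computation preceding the statement. You have merely spelled out in more detail what the paper compresses into one line.
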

\begin{proof}
The conclusion follows directly from Theorem \eqref{mainThmExample1} and the relation \eqref{estimate}.
\end{proof}

Let us state now the second result, concerning the perturbed system \eqref{hyperpert2}, which keeps dynamically invariant the regular part of the symplectic foliation of the Poisson manifold $(\Omega^{\prime}_{\varepsilon_3,\dots,\varepsilon_n},\{\cdot,\cdot\}_{\nu;C_1,\dots,C_{n-2}})$. Before stating the result, recall that for every $c_1,\dots,c_{n-2}>0$, $J_{c_1,\dots,c_{n-2}}(h)=I_{c_1,\dots,c_{n-2}}(h)$, for any $h\in(0,\dfrac{1}{2}\rho_{c_1,\dots,c_{n-2}}^2)$.

\begin{theorem}\label{mainThmExampleConcret2}
Suppose there exists $c_1^0,\dots,c_{n-2}^0>0$ such that the map $J_{c_1^0,\dots,c_{n-2}^0}(h)\not\equiv 0$ on $(0,\dfrac{1}{2}\rho_{c_1^0,\dots,c_{n-2}^0}^2)$. Then for every $\varepsilon_3,\dots,\varepsilon_n\in\{-1,1\}$, and respectively for every $c_1,\dots,c_{n-2}>0$ such that $J_{c_1\dots,c_{n-2}}(h)\not\equiv 0$ on $(0,\dfrac{1}{2}\rho_{c_1,\dots,c_{n-2}}^2)$, the total number (counting also the multiplicities) of limit cycles of the perturbed system \eqref{hyperpert2} with $Q_1$ and $Q_2$ of the type \eqref{poly}, located on the symplectic leaf $\Sigma_{c_1 ,\dots, c_{n-2}}\cap\Omega^{\prime}_{\varepsilon_3,\dots,\varepsilon_n}$, and bifurcating from the family $\bigcup_{h\in (0,1/2\rho_{c_1,\dots,c_{n-2}}^2)}\gamma_h^{\varepsilon_3,\dots,\varepsilon_n}$ \eqref{circles} of periodic orbits of the completely integrable \eqref{hyperelli}, is smaller than or equal to 
\begin{align*}
\left\{\begin{array}{l}
\dfrac{m-1}{2},  \ \text{if} \ m\in 2\mathbb{N}+1\\
\dfrac{m-2}{2},  \ \text{if} \ m\in 2\mathbb{N}.\\
\end{array}\right.
\end{align*}
\end{theorem}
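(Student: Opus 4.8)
The plan is to reduce the claim to Theorem \ref{mainThmExample2} combined with the explicit polynomial expression for the displacement integral computed above. First I would record that, since the perturbation data satisfy $P_1=Q_1$ and $P_2=Q_2$, the two Melnikov-type integrals coincide: for every $c_1,\dots,c_{n-2}>0$ and every $h\in(0,\frac{1}{2}\rho_{c_1,\dots,c_{n-2}}^2)$ one has $J_{c_1,\dots,c_{n-2}}(h)=I_{c_1,\dots,c_{n-2}}(h)$, and — as already observed — this common value does not depend on the choice of $\varepsilon_3,\dots,\varepsilon_n\in\{-1,1\}$. Moreover the computation carried out before \eqref{estimate} shows
\[
J_{c_1,\dots,c_{n-2}}(h)=
\begin{cases}
h\bigl(a_1+a_3h+\dots+a_m h^{\frac{m-1}{2}}\bigr), & m\in 2\mathbb{N}+1,\\
h\bigl(a_1+a_3h+\dots+a_{m-1}h^{\frac{m-2}{2}}\bigr), & m\in 2\mathbb{N},
\end{cases}
\]
with the coefficients $a_1,a_3,\dots$ as displayed there; in particular $h\mapsto J_{c_1,\dots,c_{n-2}}(h)$ is a real polynomial.

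Next, I would fix $\varepsilon_3,\dots,\varepsilon_n\in\{-1,1\}$ and fix $c_1,\dots,c_{n-2}>0$ with $J_{c_1,\dots,c_{n-2}}\not\equiv 0$ on $(0,\frac{1}{2}\rho_{c_1,\dots,c_{n-2}}^2)$ — the hypothesis guarantees at least the tuple $(c_1^0,\dots,c_{n-2}^0)$ has this property. Then the last item of Theorem \ref{mainThmExample2}, applied to the perturbed system \eqref{hyperpert2} with $Q_1,Q_2$ of type \eqref{poly}, says that the total number (counting multiplicities) of limit cycles lying on $\Sigma_{c_1,\dots,c_{n-2}}\cap\Omega^{\prime}_{\varepsilon_3,\dots,\varepsilon_n}$ and bifurcating from the family $\bigcup_{h}\gamma_h^{\varepsilon_3,\dots,\varepsilon_n}$ of \eqref{circles} is bounded above by the maximal number of isolated zeros, counted with multiplicity, of $J_{c_1,\dots,c_{n-2}}$ in the open interval $(0,\frac{1}{2}\rho_{c_1,\dots,c_{n-2}}^2)$.

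Finally, I would bound that zero count. Since $0$ is not in the open interval, the linear factor $h$ produces no zero there, so the zeros of $J_{c_1,\dots,c_{n-2}}$ on $(0,\frac{1}{2}\rho_{c_1,\dots,c_{n-2}}^2)$ are exactly the zeros of the polynomial factor $a_1+a_3h+\dots$, which is a nonzero polynomial of degree at most $\frac{m-1}{2}$ (if $m$ is odd) or $\frac{m-2}{2}$ (if $m$ is even), hence has at most that many zeros with multiplicity; this is precisely \eqref{estimate}, uniformly in $\varepsilon_3,\dots,\varepsilon_n$ and in the admissible $c_1,\dots,c_{n-2}$, which gives the theorem. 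The only point requiring a little care — and the mildest of obstacles — is that for special parameter values the polynomial factor may drop degree; but this only decreases the zero count and so does not affect the stated upper bound. (This is also why the estimate is sharp: as noted before \eqref{estimate}, one may choose $Q_1,Q_2$ of type \eqref{poly} so that the polynomial factor attains full degree with all its roots inside $(0,\frac{1}{2}\rho_{c_1,\dots,c_{n-2}}^2)$, realizing the bound.)
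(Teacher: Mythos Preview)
Your proof is correct and follows exactly the same approach as the paper: invoke the last item of Theorem \ref{mainThmExample2} to bound the number of bifurcating limit cycles by the number of isolated zeros of $J_{c_1,\dots,c_{n-2}}$, and then use the explicit polynomial form and the estimate \eqref{estimate} to bound that zero count. Your write-up simply spells out in more detail what the paper compresses into one sentence.
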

\begin{proof}
The conclusion follows directly from Theorem \eqref{mainThmExample2} and the relation \eqref{estimate}.
\end{proof}


\bigskip
\bigskip

\noindent {\sc R.M. Tudoran}\\
West University of Timi\c soara\\
Faculty of Mathematics and Computer Science\\
Department of Mathematics\\
Blvd. Vasile P\^arvan, No. 4\\
300223 - Timi\c soara, Rom\^ania.\\
E-mail: {\sf tudoran@math.uvt.ro}\\
\medskip

\noindent {\sc A. G\^irban}\\
"Politehnica" University of Timi\c soara,\\
Department of Mathematics,\\
Pia\c ta Victoriei, No. 2,\\
300006 - Timi\c soara, Rom\^ania.\\
E-mail: {\sf anania.girban@gmail.com}\\


\begin{thebibliography}{99}

\bibitem{arnoldd} {\footnotesize \textsc{V.I. Arnold, S.M. Gusein-Zade and A.N. Varchenko}, \textit{Singularities of Differentiable Maps}, Volume II, Monodromy and Asymptotics of Integrals, Monographs in Mathematics, Vol. 83, Birkh\"auser 1988. }

\bibitem{bates} {\footnotesize \textsc{L. Bates and R. Cushman}, Complete integrability beyond Liouville-Arnold, \textit{Rep. Math. Phys.}, 56(1)(2005), 77--91. }

\bibitem{bermejo1} {\footnotesize \textsc{I.A. Garc\'ia and B. Hern\'andez-Bermejo}, Perturbed Euler top and bifurcations of limit cycles on invariant Casimir surfaces, \textit{Physica D}, 239(2010), 1665--1669. }

\bibitem{bermejo2} {\footnotesize \textsc{I.A. Garc\'ia and B. Hern\'andez-Bermejo}, Periodic orbits in analytically perturbed Poisson systems, \textit{Physica D}, 276(2014), 1--6. }

\bibitem{cli} {\footnotesize \textsc{C. Christopher and C. Li}, \textit{Limit cycles of differential equations}, Advanced Courses on Mathematics CRM Barcelona, Birkh\"auser Verlag, 2007. }

\bibitem{g1} {\footnotesize \textsc{A. Ay, M. G\"urses and K. Zheltukhin}, Hamiltonian equations in $\R^3$, \textit{J. Math. Phys.}, 44(2003), 5688--5705. }

\bibitem{berme1} {\footnotesize \textsc{B. Hern\'andez-Bermejo}, Generalization of solutions of the Jacobi PDEs associated to time reparametrizations of Poisson systems, \textit{J. Math. Anal. Appl.}, 344(2008), 655--666. }


\bibitem{g2} {\footnotesize \textsc{M. G\"urses, G.S. Guseinov and K. Zheltukhin}, Dynamical systems and Poisson structures, \textit{J. Math. Phys.}, 50(2009), 112703. }

\bibitem{nambu1} {\footnotesize \textsc{Y. Nambu}, Generalized Hamiltonian mechanics, \textit{Phys. Rev. D},7(8)(1973), 2405--2412. }

\bibitem{poincare} {\footnotesize \textsc{H. Poincar\'e}, Sur le probl\`eme des trois corps et les \'equations de la dynamique, \textit{Acta Math.}, XIII(1890), 1--270. }

\bibitem{pontryagin} {\footnotesize \textsc{L. Pontryagin}, On dynamical systems close to Hamiltonian ones, \textit{Zh. Exp. and Theor. Phys.}, 4(1934), 234--238. }

\bibitem{ratiurazvan} {\footnotesize \textsc{T.S. Ratiu, R.M. Tudoran, L. Sbano, E. Sousa Dias and G. Terra}, \textit{Geometric Mechanics and Symmetry: the Peyresq Lectures; Chapter II: A Crash Course in Geometric Mechanics}, pp. 23--156, London Mathematical Society Lecture Notes Series, vol. 306, Cambridge University Press 2005. }

\bibitem{nambu2} {\footnotesize \textsc{L. Takhtajan}, On foundation of the generalized Nambu mechanics, \textit{Comm. Math. Phys.},160(1994), 295--315. }

\bibitem{tudoran} {\footnotesize \textsc{R.M. Tudoran}, A normal form of completely integrable systems, \textit{J. Geom. Phys.}, 62(5)(2012), 1167--1174. }

\bibitem{tudoran1} {\footnotesize \textsc{R.M. Tudoran}, Affine distributions on Riemannian manifolds with applications to dissipative dynamics, \textit{J. Geom. Phys.}, 92(2015), 55--68. }

\end{thebibliography}
\end{document}